\documentclass[a4paper, 11pt]{amsart}

\usepackage{epsfig}
\usepackage{amsthm,amsfonts}
\usepackage{amssymb,graphicx,color}
\usepackage[all]{xy}
\usepackage{verbatim}
\usepackage{hyperref}
\usepackage{enumitem}

\newtheorem{theorem}{Theorem}[section]
\newtheorem*{theorem*}{Theorem}
\newtheorem{lemma}[theorem]{Lemma}
\newtheorem{corollary}[theorem]{Corollary}
\newtheorem{proposition}[theorem]{Proposition}
\newtheorem{definition}[theorem]{Definition}

\newtheorem{example}[theorem]{Example}
\newtheorem{remark}[theorem]{Remark}
\newtheorem{claim}{Claim}[theorem]
\newtheorem{question}{Question}
\newtheorem*{problem_b}{Bernstein's problem}

%Atalhos de objetos matematicos
\newcommand{\R}{\mathbb{R}}
\newcommand{\C}{\mathbb{C}}
\newcommand{\N}{\mathbb{N}}
% \newcommand{\cqd}{\hfill\square}

%% Adicionando comando para incluir label manualmente
% \makeatletter
% \newcommand{\mlabel}[2]{\def\@currentlabel{#2}\label{#1}} % usa-se \mlabel{key}{X}. Entao \ref{key} vai mostrar X.
% \makeatother

% inicio reenumerar toerema

\newenvironment{customthm}[1]
  {\innercustomthm}
  {\endinnercustomthm}

\newenvironment{customcor}[1]
  {\innercustomcor}
  {\endinnercustomcor}
% fim reenumerar teorema

\begin{document}

\title[On the Moser's Bernstein Theorem]{On the Moser's Bernstein Theorem}

\author[J. E. Sampaio]{Jos\'e Edson Sampaio}
\address[Jos\'e Edson Sampaio]{Departamento de Matem\'atica, Universidade Federal do Cear\'a,
	      Rua Campus do Pici, s/n, Bloco 914, Pici, 60440-900, 
	      Fortaleza-CE, Brazil. E-mail: {\tt edsonsampaio@mat.ufc.br}}

\author[E. C. da Silva]{Eur\'ipedes Carvalho da Silva}

\address{Euripedes Carvalho da Silva: Departamento de Matem\'atica, Instituto Federal de Educa\c{c}\~ao, Ci\^encia e Tecnologia do Cear\'a,
 	      Av. Parque Central, 1315, Distrito Industrial I, 61939-140, 
 	      Maracana\'u-CE, Brazil. \newline 
 	      and 	      Departamento de Matem\'atica, Universidade Federal do Cear\'a,
	      Rua Campus do Pici, s/n, Bloco 914, Pici, 60440-900, 
	      Fortaleza-CE, Brazil.
               E-mail: {\tt euripedes.carvalho@ifce.edu.br}
 } 

\thanks{The first named author was partially supported by CNPq-Brazil grant 310438/2021-7. This work was supported by the Serrapilheira Institute (grant number Serra -- R-2110-39576).
}
 
\keywords{Bernstein Theorem, Moser's Bernstein Theorem, Minimal surfaces, monotonicity formula, Lipschitz regularity}
\subjclass[2010]{53A10; 53A07 (primary); 14J17; 53C42; 14P10 (secondary)}

\begin{abstract}
In this paper, we prove the following version of the famous Bernstein's theorem:
Let $X\subset \mathbb R^{n+k}$ be a closed and connected set with Hausdorff dimension $n$. Assume that $X$ satisfies the monotonicity formula at $p\in X$. Then, the following statements are equivalent:
\begin{itemize}
 \item [(1)] $X$ is an affine linear subspace;
 \item [(2)] $X$ is a definable set that is Lipschitz regular at infinity and its geometric tangent cone at infinity, $C(X,\infty)$, is a linear subspace;
 \item [(3)] $X$ is a definable set, blow-spherical regular at infinity and $C(X,\infty)$ is a linear subspace;
 \item [(4)] $X$ is a definable set that is Lipschitz normally embedded at infinity and $C(X,\infty)$ is a linear subspace;
 \item [(5)] the density of $X$ at infinity is 1.
\end{itemize}

Consequently, we prove the following generalization of Bernstein's theorem: Let $X\subset \mathbb R^{n+1}$ be a closed and connected set with Hausdorff dimension $n$. Assume that $X$ satisfies the monotonicity formula at $p\in X$ and there are compact sets $K\subset \mathbb{R}^n$ and $\tilde K\subset \mathbb{R}^{n+1}$ such that $X\setminus \tilde K$ is a minimal hypersurface that is the graph of a $C^2$-smooth function $u\colon \mathbb{R}^{n}\setminus K\to \mathbb{R}$. Assume that $u$ has bounded derivative whenever $n>7$. Then $X$ is a hyperplane.  Several other results are also presented. For example, we generalize the o-minimal Chow's theorem, we prove that any entire complex analytic set that is bi-Lipschitz homeomorphic to a definable set in an o-minimal structure must be an algebraic set. We also obtain that Yau's Bernstein Problem, which says that an oriented stable complete minimal hypersurface in $\mathbb{R}^{n+1}$ with $n\leq 6$ must be a hyperplane, holds true whether the hypersurface is a definable set in an o-minimal structure.
\end{abstract}

\maketitle

\tableofcontents

\section{Introduction}\label{introduction}

We start this article by reminding the famous Bernstein's problem:

\begin{problem_b}\label{Bernstein-problem}
If the graph of a function on $\R^{n}$ is a minimal hypersurface in $\R^{n+1}$, does this imply that the function is linear? 
\end{problem_b}

%This problem is false for 

The answer is negative when $n$ is at least $8$ (see \cite{BombieriGG:1969}), but the answer was proved to be positive the cases $n=2$ by Bernstein (see \cite{Bernstein:1915}), $n = 3$ by DeGiorgi (see \cite{DeGiorgi:1965}), $n = 4$ by Almgren (see \cite{Almgren:1966}) and $n \leq 7$ by Simons (see \cite{Simons:1968}). The positive answers can be summarized as follows:

\begin{theorem}\label{thm:bernstein_thm}
Let $M\subset \R^{n+1}$ be a minimal hypersurface which is a graph of $C^2$-smooth function $u\colon \R^n\to \R$ with $2\leq n\leq 7$. Then $u$ is a linear function and $M$ must be a hyperplane. 
\end{theorem}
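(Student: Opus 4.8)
The plan is to run the classical geometric-measure-theory argument that reduces Bernstein's theorem in dimension $n$ to Simons' theorem on the nonexistence of singular area-minimizing hypercones, together with De~Giorgi's dimension-reduction trick, which is precisely what raises the dimension bound from $n\le 6$ to $n\le 7$. First I would show that $M=\mathrm{graph}(u)$ is not merely stationary but \emph{area-minimizing} in $\mathbb{R}^{n+1}$: writing $\nu=(-\nabla u,1)/\sqrt{1+|\nabla u|^{2}}$ for the upward unit normal and extending it to $\mathbb{R}^{n}\times\mathbb{R}$ so that it does not depend on the last coordinate, the $n$-form $\omega$ obtained by contracting the Euclidean volume form with this extended unit vector field has comass one, is closed (this closedness is exactly the minimal surface equation $\mathrm{div}(\nabla u/\sqrt{1+|\nabla u|^{2}})=0$), and restricts to the area form on $M$; hence $\omega$ calibrates $M$, so $M$ minimizes area with respect to all compactly supported deformations, and equivalently the subgraph $E=\{x_{n+1}<u(x')\}$ is locally perimeter-minimizing.

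Next I would take the blow-down of $M$ at infinity, i.e.\ the rescalings $M_{R}=R^{-1}M$ as $R\to\infty$. The monotonicity formula controls the density ratios of the area-minimizing hypersurface $M$, so by Federer--Fleming compactness a subsequence converges to an area-minimizing integral current which is a cone $C\subset\mathbb{R}^{n+1}$ with vertex at the origin. If $C$ is a hyperplane, the area density of $M$ at infinity is $1$, so by monotonicity every density ratio of $M$ equals $1$, and the rigidity case of the monotonicity formula makes $M$ a hyperplane and $u$ affine; we are then done. So assume $C$ is not a hyperplane. Now comes De~Giorgi's reduction: since $E$ is a subgraph, $E-te_{n+1}\subset E$ for all $t>0$, and this monotonicity passes to the blow-down, so the perimeter-minimizing cone $E_{\infty}$ bounded by $C$ satisfies $E_{\infty}-te_{n+1}\subset E_{\infty}$; a strong maximum principle for area minimizers upgrades this to genuine translation invariance, whence $C=C'\times\mathbb{R}e_{n+1}$ is a cylinder over a nonplanar area-minimizing hypercone $C'\subset\mathbb{R}^{n}$, which --- after a standard induction on dimension via Federer's dimension-reduction argument, using the already established lower-dimensional cases --- may be assumed smooth away from its vertex.

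The last step is to apply Simons' theorem: there is no nonplanar area-minimizing minimal hypercone in $\mathbb{R}^{N}$ that is smooth away from the vertex when $N\le 7$. Since here $C'\subset\mathbb{R}^{n}$ with $n\le 7$, this is a contradiction, so $u$ must be affine and $M$ a hyperplane.

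The hard part is Simons' theorem itself: its proof rests on the Simons identity for $\Delta|A|^{2}$ along a minimal hypersurface together with a sharp stability (Jacobi-operator) inequality on the link of the cone, and in practice one cites it (\cite{Simons:1968}) rather than reproving it --- just as the case $n=2$ is classically obtained from Bernstein's original argument \cite{Bernstein:1915} and the cases $n=3,4$ from De~Giorgi \cite{DeGiorgi:1965} and Almgren \cite{Almgren:1966}. The remaining technical weight is carried by the regularity theory for area-minimizing hypersurfaces --- compactness, monotonicity, Federer's dimension reduction --- needed to make the blow-down and the splitting of $C$ rigorous.
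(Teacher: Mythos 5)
The paper does not prove this theorem: it is stated in the introduction as a summary of known results and attributed directly to Bernstein \cite{Bernstein:1915} ($n=2$), De~Giorgi \cite{DeGiorgi:1965} ($n=3$), Almgren \cite{Almgren:1966} ($n=4$) and Simons \cite{Simons:1968} ($n\le 7$), so there is no in-paper argument to compare yours against. Your sketch is the standard geometric-measure-theory proof and is correct in outline: the calibration $\omega=\iota_{\nu}\,dV$ (with $\nu$ extended independently of $x_{n+1}$, so that $d\omega=0$ is exactly the minimal surface equation) makes the graph area-minimizing; the blow-down produces a minimizing cone; the hyperplane case is handled by monotonicity rigidity; and De~Giorgi's cylinder splitting drops the cone into $\mathbb{R}^{n}$, where Federer dimension reduction plus Simons' nonexistence theorem for singular minimizing hypercones in dimension at most $7$ yields the contradiction. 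The only caveat is that, as you acknowledge, essentially all of the weight sits in results you cite rather than prove (Simons' identity and stability estimate, Federer--Fleming compactness, the regularity and dimension-reduction theory), so as a self-contained proof it is a roadmap rather than a derivation --- which is consistent with how the paper itself treats the statement.
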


Furthermore, the celebrated theorem due to J. Moser in \cite{Moser:1961}, called Moser's Bernstein Theorem, says the following:
\begin{theorem}[Moser's Bernstein Theorem]\label{thm:moser_thm}
Let $M\subset \R^{n+1}$ be a minimal hypersurface which is a graph of an entire Lipschitz function $u\colon \R^n\to \R$. Then $u$ is a linear function and $M$ must be a hyperplane. 
\end{theorem}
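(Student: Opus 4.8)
The plan is to convert the geometric hypothesis into a statement about the minimal surface equation and then run a Liouville argument powered by the De Giorgi--Nash--Moser theory. First I would record that the graph $M=\{(x,u(x)):x\in\R^{n}\}$ is a minimal hypersurface precisely when $u$ solves the minimal surface equation $\mathrm{div}\bigl(\nabla u/\sqrt{1+|\nabla u|^{2}}\,\bigr)=0$ on $\R^{n}$, and that when $u$ is $L$-Lipschitz we have $|\nabla u|\le L$ a.e., so (subscripts denoting partial derivatives) the non-divergence coefficient matrix $a^{ij}(x)=\delta_{ij}-u_{i}(x)u_{j}(x)/(1+|\nabla u(x)|^{2})$ satisfies $(1+L^{2})^{-1}|\xi|^{2}\le a^{ij}(x)\xi_{i}\xi_{j}\le|\xi|^{2}$ for all $\xi\in\R^{n}$; thus the equation is uniformly elliptic with constants depending only on $L$, and De Giorgi--Nash together with Schauder bootstrapping makes $u$ smooth, so the computations below are licit. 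When $1\le n\le 7$ one may instead simply quote Theorem~\ref{thm:bernstein_thm}, since a Lipschitz minimal graph over $\R^{n}$ is in particular a $C^{2}$ minimal graph over all of $\R^{n}$; the interest of the argument that follows is that it handles every dimension at once, in particular $n\ge 8$, where the Lipschitz hypothesis carries the weight.

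Next I would differentiate the equation. Fixing a unit vector $e$ and writing $w:=\partial_{e}u$, one has $|w|\le L$ on all of $\R^{n}$; and differentiating $\sum_{i}\partial_{i}\bigl(A_{i}(\nabla u)\bigr)=0$, with $A_{i}(p)=p_{i}/\sqrt{1+|p|^{2}}$, in the direction $e$ shows that $w$ is a weak solution on $\R^{n}$ of the linear divergence-form equation $\sum_{i,j}\partial_{i}\bigl(b^{ij}(x)\,\partial_{j}w\bigr)=0$, where $b^{ij}(x)=(\partial A_{i}/\partial p_{j})(\nabla u(x))$. A direct computation gives $b^{ij}(x)=\bigl(1+|\nabla u|^{2}\bigr)^{-1/2}\bigl(\delta_{ij}-u_{i}u_{j}/(1+|\nabla u|^{2})\bigr)$, a symmetric matrix whose eigenvalues lie between $(1+L^{2})^{-3/2}$ and $1$; its entries are merely bounded and measurable, but the ellipticity constants again depend only on $L$. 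Thus $w$ is a globally bounded weak solution of a uniformly elliptic divergence-form equation with bounded measurable coefficients.

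The crucial step, which I expect to be the main obstacle, is a Liouville theorem for such $w$. I would invoke the interior oscillation decay of De Giorgi--Nash--Moser: there exist $\alpha\in(0,1)$ and $C>0$, depending only on $n$ and on $L$, with $\mathrm{osc}_{B_{1}(y)}w\le C\,R^{-\alpha}\,\mathrm{osc}_{B_{R}(y)}w$ for every $y\in\R^{n}$ and every $R\ge 1$. Since $\mathrm{osc}_{B_{R}(y)}w\le 2L$ for all $R$, letting $R\to\infty$ forces $w$ to be locally constant, hence constant on the connected set $\R^{n}$; equivalently, one applies Moser's Harnack inequality to the nonnegative solution $\sup_{\R^{n}}w-w$. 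Applying this with $e$ ranging over an orthonormal basis of $\R^{n}$ shows that $\nabla u$ is a constant vector, so $u$ is an affine function and $M$ is a hyperplane. The hard ingredient is exactly the interior H\"older/Harnack estimate for divergence-form equations with only $L^{\infty}$ coefficients: Schauder theory is unavailable because $b^{ij}$ need not be continuous, and this is precisely where Moser's original argument concentrated its difficulty.

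Finally, I would point out an alternative route more in the spirit of the present paper. A minimal graph is area-minimizing (it is calibrated), so $M$ satisfies the monotonicity formula at every one of its points, and since the $L$-Lipschitz bound is preserved by the rescalings $\lambda M$, the geometric tangent cone at infinity $C(M,\infty)$ is a minimal cone that is again an $L$-Lipschitz graph over $\R^{n}$. If one can show that such a cone must be a linear subspace -- equivalently, that the density of $M$ at infinity equals $1$ -- then the rigidity case of the monotonicity formula (constant density at a point forces $M$ to be dilation-invariant about that point, and being a smooth cone about each of its points forces $M$ to be affine) completes the proof. The difficulty is then merely relocated: establishing that a Lipschitz minimal cone must be flat when $n\ge 8$ has the same De Giorgi--Nash depth as the direct argument above.
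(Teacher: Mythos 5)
Your argument is correct, and it is in essence Moser's original proof: the paper states Theorem \ref{thm:moser_thm} as a classical result and cites \cite{Moser:1961} for it without giving a proof, so there is no in-paper argument to compare against line by line. Your reduction is sound at every step: the Lipschitz bound makes the minimal surface operator uniformly elliptic with constants depending only on $L$; the difference-quotient/De Giorgi--Nash argument upgrades $u$ to $C^{\infty}$; each $w=\partial_{e}u$ is a bounded entire weak solution of a divergence-form equation whose coefficient matrix you compute correctly to have eigenvalues in $[(1+L^{2})^{-3/2},1]$; and the oscillation-decay (equivalently, Harnack) estimate forces $w$ to be constant --- this is exactly the content of \cite[Corollary, p.~591]{Moser:1961}. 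Where the paper genuinely differs is in how it recovers and generalizes the statement (Corollaries \ref{cor:moser_thm_def} and \ref{thm:gen_moser_thm}, Theorem \ref{thm:gen_moser_thm_higher}): it follows the second route you sketch at the end, but it sidesteps the ``relocated difficulty'' of classifying Lipschitz minimal cones. Rather than analyzing $C(M,\infty)$ as a minimal cone, it invokes Simon's asymptotic result \cite[Lemma 1.18]{Simon:1987} (together with \cite{Bers:1951}) to obtain that $Du$ has a limit at infinity; after rotating coordinates so that this limit is $0$, the graph map over the exterior of a large ball is $(1+\pi\varepsilon)$-bi-Lipschitz for every $\varepsilon>0$, which pins the density at infinity to $1$ via the Kurdyka--Raby formula at infinity (Theorem \ref{thm:densitymult}); the rigidity case of the monotonicity formula (Theorem \ref{main_thm}, $(5)\Rightarrow(1)$) then yields flatness. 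The trade-off is clear: your route is self-contained modulo the De Giorgi--Nash--Moser theory and handles all dimensions uniformly with no cone classification, while the paper's route replaces the PDE Liouville theorem with geometric measure theory at infinity and thereby extends to the non-graphical, higher-codimension, and ``graph only outside a compact set'' settings.
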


Let us remark that, in general, Moser's Bernstein theorem does not hold in higher codimension. Lawson and Osserman in \cite{LawsonO:1977} presented a minimal cone which is the graph of a Lipschitz mapping, but is not an affine linear subspace, more precisely, they presented the following:

\begin{example}[Theorem 7.1 in \cite{LawsonO:1977}]\label{example:LO}
The graph of the Lipschitz mapping $f\colon \R^4\to \R^3$ given by $f(0)=0$ and
$$
f(x)=\frac{\sqrt{5}}{2}\|x\|\eta\left(\frac{x}{\|x\|}\right), \quad \forall x\not=0,
$$
is a minimal cone, where $\eta\colon \mathbb{S}^3\to \mathbb{S}^2$ is the Hopf mapping given by
$$
\eta(z_1,z_2)=(|z_1|^2-|z_2|^2,2z_1\bar z_2).
$$
\end{example}
Several other mathematicians approached Moser's Bernstein Theorem in higher codimension (for example, see \cite{AssimosJ:2018}, \cite{JostX:1999}, \cite{JostXY:2016}, \cite{JostXY:2018} and \cite{Xin:2005}).

In this direction, we prove the following result:
\begin{customthm}{\ref{thm:gen_moser_thm_higher}}
Let $X\subset \mathbb R^{n+k}$ be a connected closed set with Hausdorff dimension $n$. Then, $X$ is an $n$-dimensional affine linear subspace if and only if we have the following: 
\begin{enumerate}
 \item $X$ satisfies the monotonicity formula at some $p\in X$; 
 \item there are compact sets $K\subset \R^n$ and $\tilde K\subset \R^{n+k}$ such that $X\setminus \tilde K$ is the graph of a $C^1$-smooth function $u\colon \mathbb{R}^{n}\setminus K\to \mathbb{R}^{k}$ with bounded derivative;
 \item $\mathcal{N}(X,\infty)$ is a linear subspace.
\end{enumerate} 
\end{customthm}

In order to know, for a subset $X\subset \mathbb{R}^{n+k}$, with Hausdorff dimension $n$ and $p\in X$, we say that $X$ {\bf satisfies the monotonicity formula at $p$} if the function $\theta^n(X,p,\cdot)\colon (0,+\infty)\to [1,+\infty)$ given by 
$$
\theta^n(X,p,r) = \frac{\mathcal{H}^n(X\cap B_r^n(p))}{\mu_n r^n}
$$
is well-defined and nondecreasing, and satisfies the following expression:
\begin{itemize}
\item [(i)] $X$ is $C^1$-smooth at $p$ if and only if $\theta^n(X,p):=\lim \limits_{r\to 0^+}\theta^n(X,p,r)=1$;
\item [(ii)] $\theta^n(X,p,\cdot)$ is a constant function if and only if $X$ is a cone with vertex at $p$,
\end{itemize}
where $\mu_n$ is the volume of the $n$-dimensional Euclidean unit ball, $\mathcal{H}^n(A)$ is the $n$-dimensional Hausdorff measure of $A$, and $B_r^m(p)\subset \mathbb{R}^m$ is the open Euclidean ball centered at $p$ of radius $r>0$. Moreover, $\mathcal{N}(X,\infty)$ is the union of all hyperplanes $T$ such that there is a sequence $\{x_j\}_j\subset X\setminus {\rm Sing}_1(X)$ such that $\lim\|x_j\|=+\infty$ and $T_{x_j}X$ converges to $T$, where ${\rm Sing}_1(X)$ is the set of points $x\in X$ such that $X\cap U$ is not a $C^1$ smooth submanifold of $\mathbb{R}^{n+k}$, for any open neighborhood $U\subset \mathbb{R}^{n+k}$ of $x$.

\begin{remark}\label{rem:examples_monotonicity}
If $X \subset\R^{n+k}$ is a complete minimal submanifold or a closed area-minimizing set, then $X$ satisfies the monotonicity formula at any $p\in X$. In particular, any entire-pure dimensional complex analytic set $X\subset \C^n$ satisfies the monotonicity formula at any $p\in X$.
\end{remark}

In Section \ref{sec:gen_bernstein_thm}, we present several consequences of Theorem \ref{thm:gen_moser_thm_higher}. Moreover, in subsection \ref{subsec:examples_graph}, by presenting several examples, we show that Theorem \ref{thm:gen_moser_thm_higher} is sharp such that no hypothesis can be dropped.

Another natural question is whether Moser's Bernstein Theorem can be generalized to the parametric case. Thus, the following questions arise:
\begin{question}\label{question-moser-gen}
Given a smooth minimal hypersurface $M\subset \R^{n+1}$ which is bi-Lipschitz homeomorphic to $\R^n$, is $M$ an affine linear subspace?
\end{question}

Recently, \cite{FernandesS:2020} and \cite{Sampaio:2023} approached Question \ref{question-moser-gen} in higher codimension for complex analytic sets. For instance, they proved that a pure dimensional complex analytic set that is Lipschitz regular at infinity (see Definition \ref{def:Lip_regular}) must be an affine linear subset.

In this article, we use the concepts of \cite{FernandesS:2020} and \cite{Sampaio:2023} to provide partial positive answers to Question 1, even for high codimensions. We prove the following:

\begin{customthm}{\ref{main_thm}}
Let $X\subset \mathbb R^{n+k}$ be a closed and connected set with $n=\dim_H X$. Assume that $X$ satisfies the monotonicity formula at some $p\in X$. Then the following statements are equivalent:
\begin{itemize}
 \item [(1)] $X$ is an affine linear subspace;
 \item [(2)] $X$ is a definable set, blow-spherical regular at infinity and $C(X,\infty)$ is a linear subspace;
 \item [(3)] $X$ is a definable set that is Lipschitz regular at infinity and $C(X,\infty)$ is a linear subspace;
 \item [(4)] $X$ is an LNE at infinity definable set and $C(X,\infty)$ is a linear subspace;
 \item [(5)] $\theta^n(X):=\lim\limits_{r\to +\infty}\frac{\mathcal{H}^n(X\cap B_r^n(0))}{\mu_n r^n}=1$.
\end{itemize}
\end{customthm}

We obtain several consequences, for instance, we recover the results proven in \cite{FernandesS:2020} and \cite{SampaioS:2023}.

The main tool to prove Theorem \ref{main_thm} is the Kurdyka-Raby's formula at infinity. 

\begin{customthm}{\ref{thm:densitymult}}{\rm (Kurdyka-Raby's formula at infinity)}.
Let $X\subset \mathbb \R^{n+k}$ be a definable set in an o-minimal structure on $\R$ with $n=\dim X$. Let $C_1,...,C_m$ be the connected components of $C_{{\rm Smp}}(X,\infty)$. Then, $\theta^n(X)$ is defined; moreover for each $x\in \R^{n+k}$, 
$$
\theta^n(X)=\lim \limits _{r\to +\infty }\frac{\mathcal{H}^n(X\cap B_r^{n+k}(x))}{\mu_n r^n}=\sum\limits _{j=1}^mk_{X,\infty}(C_j)\frac{\mathcal{H}^{n}(C_j\cap B_1^{n+k})}{\mu_n}.
$$
\end{customthm}

We prove this formula in Section \ref{section:K-R_formula}.  Furthermore, we present some consequences of Theorem \ref{thm:densitymult}. For instance, we obtain the o-minimal Chow’s theorem proved in \cite[Corollary 4.5]{PeterzilS:2009} as follows:
\begin{customcor}{\ref{o-minimal_chow_thm}}[O-minimal Chow's theorem]
Let $X\subset\C^{n+k}$ be a pure-dimensional entire complex analytic set with $\dim_{\C}X=n$. Then, $X$ is a complex algebraic set if and only if $X$ is definable in an o-minimal structure on $\R$.
\end{customcor} 
We obtain the following generalization of the o-minimal Chow's theorem:
\begin{customcor}{\ref{lip_o-minimal_chow_thm}}[Lipschitz o-minimal Chow's theorem]
 Let $X\subset \C^{n+k}$ be a pure-dimensional entire complex analytic set with $\dim_{\C}X=n$. Then, $X$ is a complex algebraic set if and only if $X$ is lipeomorphic at infinity to a definable set in an o-minimal structure on $\R$.
\end{customcor}
We obtain also that the Yau's Bernstein Problem (see Problem 102 in \cite{Yau:1982}), which says that {\it an oriented stable complete minimal hypersurface in $\R^{n+1}$ with $n\leq 6$ must be a hyperplane}, holds true whether the hypersurface is a definable set in an o-minimal structure on $\R$ (see Corollary \ref{o-minimal_yau_conj}).

% \begin{customcor}{\ref{o-minimal_yau_conj}}
%  Let $X\subset \R^{n+1}$ be an oriented stable complete minimal hypersurface with $n\leq 6$. If $X$ is a definable set in an o-minimal structure on $\R$ then $X$ is a hyperplane.
% \end{customcor}

\section{Preliminaries}\label{preliminaries}
All the sets herein are assumed to be equipped with the induced Euclidean metric.

\subsection{O-minimal structures}
\begin{definition}\label{definivel}
A structure on $\R$ is a collection $\mathcal{S}=\{\mathcal{S}_n\}_{n\in\N}$ where each $\mathcal{S}_n$ is a set of subsets of $\R^n$, satisfying the following axioms:
\begin{itemize}
\item [1)] All algebraic subsets of $\R^n$ are in $\mathcal{S}_n$;
\item [2)] For every $n$, $\mathcal{S}_n$ is a Boolean subalgebra of the powerset of $\R^n$;
\item [3)] If $A\in \mathcal{S}_m$ and $B \in S_n$, then $A \times B \in \mathcal{S}_{m+n}$.
\item [4)] If $\pi \colon \R^{n+1} \to \R^n$ is the projection on the first $n$ coordinates and $A\in \mathcal{S}_{n+1}$, then $\pi(A)\in \mathcal{S}_n$.
\end{itemize}
The structure $\mathcal{S}$ is said to be {\bf o-minimal} if it satisfies the following condition:
\begin{itemize}
\item [5)] The elements of $\mathcal{S}_1$ are precisely finite unions of points and intervals.
\end{itemize}
A element of $\mathcal{S}_n$ is called {\bf definable in $\mathcal{S}$}.
\end{definition}

Throughout this paper, we fix an o-minimal structure $\mathcal{S}$ on $\R$.
\begin{definition}
A mapping $f\colon A\subset \R^n \to \R^m$ is called {\bf definable in $\mathcal{S}$} if its
graph is an element of $\mathcal{S}_{n+m}$. 
\end{definition} 
In the sequel, the adjective definable denotes definable in $\mathcal{S}$.

\subsection{Dimension of definable sets}
\begin{definition}
Let $X\subset \R^m$ be a subset and $k$ be a positive integer. The {\bf $C^k$ singular set} of $X$, denoted by ${\rm Sing}_k(X)$, is the set of points $x\in X$ such that $U\cap X$ is not a smooth submanifold of $\R^m$ for any open neighbourhood $U$ of $x$. A point of ${\rm Sing}_k(X)$ is called a {\bf $C^k$ singular point} (or a {\bf $C^k$ singularity}) of $X$. If $p\in {\rm Reg}_k(X):=X\setminus {\rm Sing}_k(X)$, we say that $X$ is smooth at $p$.
\end{definition}
Thus, if $p\in {\rm Reg}_k(X)=X\setminus {\rm Sing}_k(X)$, there is open neighbourhood $U\subset\R^m$ of $p$ such that $X\cap U$ is a $C^k$ smooth submanifold of $\R^m$ and, then, we define the {\bf real dimension of $X$ at $p$} by $\dim_p X=\dim X\cap U$. Thus, we define the {\bf dimension of $X$} by 
$$
\dim X =\max\limits_{p\in {\rm Reg}_1(X)} {\dim}_p X.
$$

We say that $X$ is a {\bf pure-dimensional set}, if $\dim X = \dim_p X$ for all $p\in {\rm Reg}_1(X)$.

In the case that $X\subset \R^m$ is a definable set, we have that ${\rm Sing}_k(X)$ is also a definable set with $\dim {\rm Sing}(X)<\dim X$ and $\dim X$ is equal to the Hausdorff dimension of $X$, denoted here by $\dim_H X$. 

In the case that $X\subset \C^m\cong \R^{2m}$ is a complex analytic set, we define the {\bf complex dimension of $X$} as $\dim_{\C}X=\frac{1}{2}\dim X$.

\subsection{Lipschitz regularity at infinity}
\begin{definition}\label{bilipschitz equivalence}
Let $X\subset \R^n$ and $Y\subset\R^m$ be two subsets. 
A mapping $\phi \colon X\rightarrow Y$ is {\bf Lipschitz} if there is a constant $C\geq 0$ such that
$$
\|\phi(x)-\phi(y)\|\leq C\|x-y\|, \quad \forall x,y\in X.
$$
In this case, we also say that $\phi$ is $C$-Lipschitz.
Moreover, a mapping $\phi \colon X\rightarrow Y$ is a {\bf lipeomorphism} if $\phi$ is a homeomorphism such that $\phi$ and $\phi^{-1}$ are Lipschitz.
When there is a lipeomorphism $\phi \colon X\rightarrow Y$, we say that $X$ and $Y$ are {\bf lipeomorphic}.
\end{definition}
\begin{definition}\label{bilipschitz_equiv_infinity}
Let $X\subset \R^n$ and $Y\subset\R^m$ be two subsets. We say that $X$ and $Y$ are {\bf lipeomorphic at infinity}, if there exist compact subsets $K\subset\R^n$ and $\widetilde K\subset \R^m$ such that $X\setminus K$ and $Y\setminus \widetilde K$ are lipeomorphic.
\end{definition}

\begin{definition}\label{def:Lip_regular}
A subset $X\subset\R^m$ is called {\bf Lipschitz regular at infinity} if $X$ and $\R^n$ are lipeomorphic at infinity for some $n\in\N$.
\end{definition}

\subsection{Lipschitz normal embedding at infinity}\label{inner distance}
Let us recall the definition of the inner distance. Given a path connected subset $X\subset\R^m$ the
\emph{inner distance}  on $X$  is defined as follows: given two points $x_1,x_2\in X$, $d_X(x_1,x_2)$  is the infimum of the lengths of paths on $X$ connecting $x_1$ to $x_2$. As stated in Section \ref{preliminaries}, all the sets considered herein are supposed to be equipped with the induced Euclidean metric. Whenever we consider the inner distance, we emphasize it clearly.

\begin{definition}[See \cite{BirbrairM:2000}]\label{normal embedding} A subset $X\subset\R^m$ is called {\bf Lipschitz normally embedded} (or shortly {\bf LNE}) if there exists $C >0$ such that
$$d_X(x_1,x_2)\le C \|x_1-x_2\|$$
for all $x_1,x_2\in X$. In this case, we also say that $X$ is {\bf $C$-LNE}.
\end{definition}

% \begin{proposition}[Proposition 2.7 in \cite{FernandesS:2020}] 
% If a closed unbounded subset $X\subset\R^m$ is Lipschitz regular at infinity, then there exists a compact $K\subset\R^m$ such that each connected component of $X\setminus K$ is Lipschitz normally embedded.
% \end{proposition}

In \cite{FernandesS:2020}, the following definition was presented:

\begin{definition}\label{normal embedding at infinity} A subset $X\subset \R^n$ is {\bf Lipschitz normally embedded at infinity} (or shortly {\bf LNE at infinity}) if there exists a compact subset $K\subset \R^n$ such that $X\setminus K$ is Lipschitz normally embedded.
\end{definition}

\subsection{Tangent cones}
\begin{definition}
Let $X\subset \R^{m}$ be an unbounded set (resp. $p\in \overline{X}$ be a non-isolated point). We say that $v\in \R^{m}$ is a tangent vector of $X$ at infinity (resp. $p$) if there are a sequence of points $\{x_i\}\subset X$ tending to infinity (resp. $p$) and a sequence of positive real numbers $\{t_i\}$ such that 
$$\lim\limits_{i\to \infty} \frac{1}{t_i}x_i= v \quad (\mbox{resp. } \lim\limits_{i\to \infty} \frac{1}{t_i}(x_i-p)= v).$$
Let $C(X,\infty)$ (resp. $C(X,p)$) denote the set of all tangent vectors of $X$ at infinity (resp. $p$). We call $C(X,\infty)$ the {\bf geometric tangent cone of $X$ at infinity} (resp. $p$). 
\end{definition}

Another way to present the geometric tangent cone at infinity (resp. $p$) of a subset $X\subset\R^{m}$ is via the spherical blow-up at infinity (resp. $p$) of $\R^{m}$. Let us consider the {\bf spherical blowing-up at infinity} (resp. $p$) of $\R^{m}$, $\rho_{\infty}\colon\mathbb{S}^{m-1}\times (0,+\infty )\to \R^{m}$ (resp. $\rho_p\colon\mathbb{S}^{m-1}\times [0,+\infty )\to \R^{m}$), given by $\rho_{\infty}(x,r)=\frac{1}{r}x$ (resp. $\rho_p(x,r)=rx+p$).

Note that $\rho_{\infty}\colon\mathbb{S}^{m-1}\times (0,+\infty )\to \R^{m}\setminus \{0\}$ (resp. $\rho_p\colon\mathbb{S}^{m-1}\times (0,+\infty )\to \R^{m}\setminus \{0\}$) is a homeomorphism with inverse mapping $\rho_{\infty}^{-1}\colon\R^{m}\setminus \{0\}\to \mathbb{S}^{m-1}\times (0,+\infty )$ (resp. $\rho_{p}^{-1} \colon\R^{m}\setminus \{0\}\to \mathbb{S}^{m-1}\times (0,+\infty ) $) given by $\rho_{\infty}^{-1}(x)=(\frac{x}{\|x\|},\frac{1}{\|x\|})$ (resp. $\rho_p^{-1}(x)=(\frac{x-p}{\|x-p\|},\|x-p\|)$).

 The {\bf strict transform} of the subset $X$ under the spherical blowing-up $\rho_{\infty}$ (resp. $\rho_{p}$) is $X'_{\infty}:=\overline{\rho_{\infty}^{-1}(X\setminus \{0\})}$ (resp. $X'_{p}:=\overline{\rho_{p}^{-1}(X\setminus \{0\})}$). The subset $X_{\infty}'\cap (\mathbb{S}^{m-1}\times \{0\})$ (resp. $X_{p}'\cap (\mathbb{S}^{m-1}\times \{0\})$) is called the {\bf boundary} of $X'_{\infty}$ (resp. $X'_p$) and is denoted as $\partial X'_{\infty}$ (resp. $\partial X'_p$). 

\begin{remark}\label{remarksimplepointcone}
{\rm If $X\subset \R^{m}$ is a definable set in an o-minimal structure, then $\partial X'_{\infty}=(C(X,\infty)\cap \mathbb{S}^{m-1})\times \{0\}$ (resp. $\partial X'_{p}=(C(X,p)\cap \mathbb{S}^{m-1})\times \{0\}$.}
\end{remark}

We finish this Subsection by reminding the invariance of the tangent cone at infinity under lipeomorphisms at infinity obtained in the paper \cite{SampaioS:2022} (see also \cite[Theorem 2.19]{FernandesS:2020}). 

\begin{theorem}[Corollary 2.11 in \cite{SampaioS:2022}]\label{inv_cones}
Let $X\subset\R^{m}$ and $Y\subset\R^{k}$ be unbounded definable sets with $n=\dim X=\dim Y$. If there exist compact subsets $K\subset \mathbb{R}^{m}$ and $\tilde K\subset \mathbb{R}^{k}$, and a lipeomorphism $\varphi\colon X\setminus K\to Y\setminus \tilde K$, then there is a globally defined lipeomorphism $d\varphi\colon C(X,\infty)\to C(Y,\infty)$ with $d\varphi(0)=0$.
\end{theorem}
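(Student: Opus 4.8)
The plan is to realize $d\varphi$ as the limit of the rescaled maps $\varphi_t(u)=\tfrac1t\varphi(tu)$ as $t\to+\infty$, and to use o-minimality to force this limit to be a genuine, single-valued, bi-Lipschitz map. First I would record that since $\varphi$ is a lipeomorphism, each $\varphi_t$ is $C$-Lipschitz with $C$-Lipschitz inverse for a single constant $C$ independent of $t$, because the bi-Lipschitz constant is scale invariant. Writing $s=1/t$ and substituting $x=tu$, I would package the whole family into the single definable set
\[
\Gamma=\{(s,sx,s\varphi(x))\in(0,+\infty)\times\R^m\times\R^k : x\in X\setminus K\},
\]
which is definable as the image of $(0,+\infty)\times(X\setminus K)$ under the definable map $(s,x)\mapsto(s,sx,s\varphi(x))$. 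The candidate graph of $d\varphi$ is then the fiber $\Gamma_0=\{(u,w):(0,u,w)\in\overline{\Gamma}\}$, a definable subset of $\R^m\times\R^k$.

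Next I would check that $\Gamma_0$ is the graph of a $C$-Lipschitz map defined on all of $C(X,\infty)$. Given $u\in C(X,\infty)$, choose $x_i\in X$ with $\|x_i\|\to+\infty$ and $s_i\to0^+$ such that $s_ix_i\to u$; the Lipschitz bound $\|\varphi(x_i)\|\le\|\varphi(x_0)\|+C\|x_i-x_0\|$ shows $\|s_i\varphi(x_i)\|$ stays bounded (by roughly $C\|u\|$), so a subsequence of $(s_i,s_ix_i,s_i\varphi(x_i))$ converges to some $(0,u,w)\in\overline\Gamma$; taking $x_i$ constant gives $(0,0,0)\in\overline\Gamma$, i.e. $d\varphi(0)=0$. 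To obtain single-valuedness and the Lipschitz estimate at once, I would invoke the curve selection lemma for the definable set $\Gamma$: for boundary points $(0,u,w)$ and $(0,u',w')$ there are definable curves $s\mapsto(s,sx(s),s\varphi(x(s)))$ and $s\mapsto(s,sx'(s),s\varphi(x'(s)))$ in $\Gamma$ (reparametrized by $s$ via the monotonicity theorem) converging to them, and for each $s$
\[
\|s\varphi(x(s))-s\varphi(x'(s))\|\le Cs\|x(s)-x'(s)\|=C\|sx(s)-sx'(s)\|,
\]
so letting $s\to0^+$ yields $\|w-w'\|\le C\|u-u'\|$. Taking $u=u'$ gives single-valuedness, and the same inequality shows $d\varphi$ is $C$-Lipschitz; since $y_i=\varphi(x_i)\in Y$, the relation $s_iy_i\to w$ shows $w\in C(Y,\infty)$, so $d\varphi$ maps $C(X,\infty)$ into $C(Y,\infty)$.

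Finally I would run the identical construction for $\varphi^{-1}\colon Y\setminus\tilde K\to X\setminus K$ to obtain a $C$-Lipschitz map $d(\varphi^{-1})\colon C(Y,\infty)\to C(X,\infty)$ with $d(\varphi^{-1})(0)=0$, and verify that the two maps are mutually inverse. Indeed, if $x(s)$ realizes $d\varphi(u)=w$ along a definable curve, then $y(s)=\varphi(x(s))$ realizes $d(\varphi^{-1})(w)=u$, because $sy(s)\to w$ while $s\varphi^{-1}(y(s))=sx(s)\to u$; by symmetry the other composition is also the identity. Hence $d\varphi$ is a bijection of $C(X,\infty)$ onto $C(Y,\infty)$ with $C$-Lipschitz inverse $d(\varphi^{-1})$, i.e. a globally defined lipeomorphism fixing the origin. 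The main obstacle is precisely the single-valuedness together with the passage of the Lipschitz constant to the limit: without definability the rescalings $\varphi_t$ need not converge and $\Gamma_0$ could be multivalued, and it is the curve selection lemma that lets me compare two boundary points along a common parameter $s$ and apply the scale-invariant Lipschitz bound fiberwise before taking $s\to0^+$.
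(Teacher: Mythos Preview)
Your argument has a genuine gap: you assert that $\Gamma=\{(s,sx,s\varphi(x)): s>0,\ x\in X\setminus K\}$ is definable because it is the image of a definable set under the ``definable map'' $(s,x)\mapsto(s,sx,s\varphi(x))$. But the hypothesis only says that $\varphi$ is a lipeomorphism, not a \emph{definable} one, so this map need not be definable and neither need $\Gamma$ be. Consequently the curve selection lemma is unavailable for $\overline{\Gamma}$, and your key step---producing definable curves to the two boundary points $(0,u,w)$ and $(0,u',w')$, synchronised by the common first coordinate $s$---collapses. Without that synchronisation you cannot compare the two curves at the same scale and pass the scale-invariant Lipschitz inequality to the limit, so single-valuedness of $\Gamma_0$ remains unproved.

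The paper does not itself prove this theorem (it is quoted from \cite{SampaioS:2022}), but the construction it invokes in the proof of Theorem~\ref{multiplicities} indicates the route taken in the cited references: extend $\varphi$ to a $C$-Lipschitz map on all of $\R^m$, note that the rescalings $\varphi_t=\tfrac1t\varphi(t\,\cdot\,)$ are uniformly $C$-Lipschitz with $\varphi_t(0)\to0$, and use an Arzel\`a--Ascoli argument to extract a sequence $t_j\to+\infty$ along which $\varphi_{t_j}\to d\varphi$ uniformly on compacta; one then checks that $d\varphi$ carries $C(X,\infty)$ into $C(Y,\infty)$ and, after a further extraction for $\varphi^{-1}$, that the two limit maps are mutually inverse on the cones. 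This compactness argument uses the definability of $X$ and $Y$ (so that their tangent cones at infinity are well-behaved) but never the definability of $\varphi$. Your o-minimal argument via the closure of $\Gamma$ is neat and would go through verbatim if $\varphi$ were assumed definable---and it would then yield a canonical $d\varphi$ rather than merely a subsequential one---but the theorem as stated does not grant you that hypothesis.
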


\subsection{Relative multiplicities at infinity}

Let $X\subset \R^{m}$ be an $n$-dimensional definable subset in an o-minimal structure and $p\in \R^{m}\cup \{\infty\}$. We say that  $x\in\partial X'_p$ is {\bf simple point of $\partial X'_p$}, if there is an open subset $U\subset \R^{m+1}$ with $x\in U$ such that:
\begin{itemize}
\item [a)] the connected components $X_1,\cdots , X_r$ of $(X'_p\cap U)\setminus \partial X'_{p}$ are topological submanifolds of $\R^{m+1}$ with $\dim X_i=\dim X$, for all $i=1,\cdots,r$;
\item [b)] $(X_i\cup \partial X'_{p})\cap U$ are topological manifolds with boundary $\partial X'_{p}\cap U$, for all $i=1,\cdots ,r$. 
\end{itemize}

Let ${\rm Smp}(\partial X'_p)$ be the set of simple points of $\partial X'_p$ and we define $C_{{\rm Smp}}(X,p)=\{t\cdot x; \, t>0\mbox{ and }(x,0)\in {\rm Smp}(\partial X')\}$. Let $k_{X,p}\colon C_{{\rm Smp}}(X,p)\to \N$ be the function such that $k_{X,p}(x)$ is the number of connected components of the germ $(\rho^{-1}(X\setminus\{p\}),v)$, where $v=(\frac{x}{\|x\|},0)$.

\begin{remark}\label{remarkdense}
	${\rm Smp}(\partial X'_p)$ is an open dense subset of the $(n-1)$-dimensional part of $\partial X'_p$ whenever $\partial X'_p$ is an $(n-1)$-dimensional subset, where $n=\dim X$. 
\end{remark} 
\begin{definition}\label{def:relative_mult}
It is clear that the function $k_{X,p}$ is locally constant. In fact, $k_{X,p}$ is constant for each connected component $X_j$ of $C_{{\rm Smp}}(X,p)$. We define {\bf the relative multiplicity at $p$ of $X$ (along of $X_j$)} as $k_{X,p}(X_j):=k_{X,p}(x)$ where $x\in X_j$. 
\end{definition}

\begin{definition}
Let $X\subset \mathbb{R}^m$ and $Y\subset \mathbb{R}^k$ be closed sets. Let $p\in \mathbb{R}^m\cup \{\infty\}$, $q\in \mathbb{R}^k\cup \{\infty\}$ and a homeomorphism $\varphi:X\rightarrow Y$ such that $q=\lim\limits_{x\rightarrow p}{\varphi(x)}$, is said a {\bf blow-spherical homeomorphism at $p$}, if  the homeomorphism 
$$\rho_{q}^{-1}\circ\varphi\circ \rho_p\colon \rho_{p}^{-1}(X\setminus\{p\})\rightarrow \rho_{q}^{-1}(Y\setminus\{q\})$$
extends to a homeomorphism $\varphi'\colon X'_p\rightarrow Y'_q$. 
\end{definition}

\subsection{Blow-spherical invariance of the relative multiplicities at infinity}

The following result was presented in \cite[Proposition 3.5]{SampaioS:2023} for semialgebraic sets, but the proof is the same for definable sets in an o-minimal structure as we can see in \cite[Teorema 3.1.7]{Sampaio:2015}.

\begin{proposition}\label{propinvarianciamultiplicidaderelativa}
	Let $X \subset\mathbb{R}^m$ and $Y\subset\mathbb{R}^k$ be definable subsets in an o-minimal structure on $\R$. Let $\varphi\colon X\rightarrow Y$ be a blow-spherical homeomorphism at $p\in \mathbb{R}^n\cup \{\infty\}$. Then $\varphi'(Smp(\partial X'_p))=Smp(\partial Y'_q)$ and 
	$$k_{X,p}(x)=k_{Y,q}(\varphi'(x)),$$
	for all $x\in Smp(\partial X'_p)$, where $q=\lim\limits_{x\rightarrow p}{\varphi(x)}$.
\end{proposition}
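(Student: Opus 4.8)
The plan is to reduce both assertions to the single fact that the extension $\varphi'$ is a homeomorphism of pairs $(X'_p,\partial X'_p)\to(Y'_q,\partial Y'_q)$; granting this, the set of simple points and the relative multiplicity function are both invariants of the germ of this pair at a boundary point, and the proposition follows essentially formally. (As elsewhere in the paper, I take $X$ and $Y$ to be closed.) The homeomorphism of pairs comes from the following observations. Since $\varphi\colon X\to Y$ is a homeomorphism between definable sets, $\dim X=\dim Y=:n$. By hypothesis $\varphi'$ extends $\psi:=\rho_q^{-1}\circ\varphi\circ\rho_p$, whose domain $\rho_p^{-1}(X\setminus\{p\})=X'_p\cap(\mathbb{S}^{m-1}\times(0,+\infty))$ is open in $X'_p$ — here closedness of $X$ is used — and whose image $\rho_q^{-1}(Y\setminus\{q\})=Y'_q\cap(\mathbb{S}^{k-1}\times(0,+\infty))$ is open in $Y'_q$. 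Hence the homeomorphism $\varphi'$ maps this open subset of $X'_p$ onto that open subset of $Y'_q$, and therefore maps the closed complement $\partial X'_p$ onto $\partial Y'_q$; consequently, for every $x\in\partial X'_p$ it induces a homeomorphism of the germ of $X'_p$ at $x$ onto the germ of $Y'_q$ at $\varphi'(x)$, carrying the germ of $\partial X'_p$ onto that of $\partial Y'_q$.

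Next I would show $\varphi'({\rm Smp}(\partial X'_p))={\rm Smp}(\partial Y'_q)$. Fix $x\in{\rm Smp}(\partial X'_p)$ witnessed by an open $U\subseteq\R^{m+1}$ as in conditions (a), (b), set $y=\varphi'(x)$, and pick an open $V\subseteq\R^{k+1}$ with $V\cap Y'_q=\varphi'(U\cap X'_p)$. A homeomorphism carries connected components to connected components, so the components of $(V\cap Y'_q)\setminus\partial Y'_q$ are exactly the images $\varphi'(X_1),\dots,\varphi'(X_r)$ of the components $X_1,\dots,X_r$ of $(U\cap X'_p)\setminus\partial X'_p$; by invariance of domain each $\varphi'(X_i)$ is a topological $n$-manifold, and each $\varphi'\big((X_i\cup\partial X'_p)\cap U\big)=\varphi'(X_i)\cup(\partial Y'_q\cap V)$ is a topological manifold with boundary $\partial Y'_q\cap V$. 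Up to upgrading the conclusion from a topological $n$-manifold to a topological submanifold of $\R^{k+1}$ of dimension $n=\dim Y$ (this local-flatness point is discussed below), this is precisely conditions (a), (b) at $y$, so $y\in{\rm Smp}(\partial Y'_q)$; applying the same argument to $\varphi'^{-1}$, which extends $\psi^{-1}$, gives the reverse inclusion.

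For the equality of relative multiplicities, recall that for $x\in{\rm Smp}(\partial X'_p)$ the number $k_{X,p}(x)$ is the number of connected components of the germ of $\rho_p^{-1}(X\setminus\{p\})$ at $x$. On $\rho_p^{-1}(X\setminus\{p\})$ the map $\varphi'$ equals $\psi$, which is a homeomorphism onto $\rho_q^{-1}(Y\setminus\{q\})$, so $\varphi'$ induces a homeomorphism from the germ of $\rho_p^{-1}(X\setminus\{p\})$ at $x$ onto the germ of $\rho_q^{-1}(Y\setminus\{q\})$ at $\varphi'(x)$; as the number of connected components of a germ is a topological invariant, $k_{X,p}(x)=k_{Y,q}(\varphi'(x))$.

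The step I expect to be the real obstacle is the o-minimal input hidden in the second paragraph: because $X'_p\subseteq\R^{m+1}$ and $Y'_q\subseteq\R^{k+1}$ sit in Euclidean spaces of a priori different dimensions, a homeomorphism transports only the abstract property of being a topological $n$-manifold, and one needs that a definable topological manifold is automatically locally flat in order to recover the submanifold requirement in condition (a). This is exactly where tameness of definable sets enters, and it is the substance of the cited \cite[Proposition 3.5]{SampaioS:2023} and \cite[Teorema 3.1.7]{Sampaio:2015}; everything else is formal bookkeeping around $\varphi'$ being a homeomorphism of the pair $(X'_p,\partial X'_p)\to(Y'_q,\partial Y'_q)$.
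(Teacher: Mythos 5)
The paper does not actually prove this proposition: it is imported from \cite[Proposition 3.5]{SampaioS:2023} with a pointer to \cite[Teorema 3.1.7]{Sampaio:2015} for the o-minimal case, so there is no in-paper argument to compare yours against. That said, your proof is essentially correct and is, in substance, the standard argument one finds in those references: once $\varphi'$ is known to be a homeomorphism of pairs $(X'_p,\partial X'_p)\to(Y'_q,\partial Y'_q)$ --- which follows, as you say, from the fact that $\varphi'$ carries the subset $\rho_p^{-1}(X\setminus\{p\})$, open in $X'_p$ because $X$ is closed, onto $\rho_q^{-1}(Y\setminus\{q\})$ --- both the simplicity of a boundary point and the component count defining $k_{X,p}$ are transported formally. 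The one genuinely non-formal ingredient, which you use implicitly in the last paragraph, is that the ``number of connected components of the germ'' is well defined, i.e.\ stabilizes over a neighborhood basis; this is where definability of $\rho_p^{-1}(X\setminus\{p\})$ (local conic structure) enters, and it is worth saying so explicitly.

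Two corrections to your closing paragraph. First, the local-flatness worry is misplaced: in this circle of papers ``topological submanifold of $\R^{m+1}$'' means a subset that is a topological manifold in the subspace topology, with $\dim$ the o-minimal dimension; no local flatness is demanded. Under that reading your argument already closes condition (a): $\varphi'(X_i)$ is a connected component of the definable set $(V\cap Y'_q)\setminus\partial Y'_q$, hence definable, and being homeomorphic to an $n$-manifold it has o-minimal dimension $n=\dim Y$. Had local flatness been required, your proposed repair would not work, because a definable topological manifold need \emph{not} be locally flat (the cone over a semialgebraically knotted circle in $\mathbb{S}^3$ is a semialgebraic subset of $\R^4$ homeomorphic to $\R^2$ but not locally flat at the vertex); fortunately it is not required. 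Second, a minor point: invariance of domain is not what makes $\varphi'(X_i)$ an $n$-manifold --- that is just the homeomorphism; invariance of domain only guarantees that ``$n$-manifold'' determines $n$.
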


\subsection{Bi-Lipschitz invariance of the relative multiplicities at infinity}

\begin{theorem}\label{multiplicities}
Let $X\subset\R^{m}$ and $Y\subset\R^{k}$ be unbounded definable sets with $n=\dim X=\dim Y$. If there exist compact subsets $K\subset \mathbb{R}^{m}$ and $\tilde K\subset \mathbb{R}^{k}$, and a lipeomorphism $\varphi\colon X\setminus K\to Y\setminus \tilde K$, then there exists a lipeomorphism $d\varphi\colon C(X,\infty)\to C(Y,\infty)$ that satisfies 
$$
k_{X,\infty}(x)=k_{Y,\infty}(d\varphi(x)),\quad \forall x\in C_{{\rm Smp}}(X,\infty)\cap d\varphi^{-1}(C_{{\rm Smp}}(Y,\infty)).
$$
In particular, $d\varphi|_{C_{{\rm Smp}}(X,\infty)}\colon C_{{\rm Smp}}(X,\infty)\to C_{{\rm Smp}}(Y,\infty)$ is a lipeomorphism where $C_{{\rm Smp}}(X,\infty)\not=\emptyset $.
\end{theorem}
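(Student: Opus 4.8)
The plan is to combine the two tools that have already been set up in the preliminaries: the bi-Lipschitz invariance of the tangent cone at infinity (Theorem \ref{inv_cones}), which produces the globally defined lipeomorphism $d\varphi\colon C(X,\infty)\to C(Y,\infty)$ with $d\varphi(0)=0$, and the blow-spherical invariance of the relative multiplicities at infinity (Proposition \ref{propinvarianciamultiplicidaderelativa}). So the strategy is: first argue that a lipeomorphism at infinity $\varphi\colon X\setminus K\to Y\setminus\tilde K$ induces, after passing to the spherical blow-up at infinity, a \emph{blow-spherical homeomorphism at $\infty$} in the sense of the definition preceding Proposition \ref{propinvarianciamultiplicidaderelativa}; then apply that proposition to transport the relative multiplicities; and finally read off the conclusion about $d\varphi$ restricted to $C_{\mathrm{Smp}}(X,\infty)$.

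In more detail, I would proceed as follows. First, reduce to the case $K=\tilde K=\emptyset$ by replacing $X$ and $Y$ with $X\setminus K$ and $Y\setminus\tilde K$; since $K$ and $\tilde K$ are compact, removing them changes neither $C(X,\infty)$, $C_{\mathrm{Smp}}(X,\infty)$, $k_{X,\infty}$ nor the analogous objects for $Y$ (all of these depend only on the germ of the set at infinity). Second, invoke Theorem \ref{inv_cones}: the lipeomorphism $\varphi$ yields a lipeomorphism $d\varphi\colon C(X,\infty)\to C(Y,\infty)$; moreover, by the construction in \cite{SampaioS:2022}, $d\varphi$ arises precisely as the boundary map of the extension of $\rho_\infty^{-1}\circ\varphi\circ\rho_\infty$ to the strict transforms, i.e. $\varphi$ is a blow-spherical homeomorphism at $\infty$ with $q=\infty$ and the extended homeomorphism $\varphi'\colon X'_\infty\to Y'_\infty$ restricts on the boundary $\partial X'_\infty=(C(X,\infty)\cap\mathbb{S}^{m-1})\times\{0\}$ to (the spherical normalization of) $d\varphi$. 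Here I rely on Remark \ref{remarksimplepointcone} to identify $\partial X'_\infty$ with $C(X,\infty)\cap\mathbb{S}^{m-1}$. Third, apply Proposition \ref{propinvarianciamultiplicidaderelativa} to this blow-spherical homeomorphism: it gives $\varphi'(\mathrm{Smp}(\partial X'_\infty))=\mathrm{Smp}(\partial Y'_\infty)$ and $k_{X,\infty}(x)=k_{Y,\infty}(\varphi'(x))$ for all $x\in\mathrm{Smp}(\partial X'_\infty)$. Fourth, homogenize: since $C_{\mathrm{Smp}}(X,\infty)$ is the cone over $\mathrm{Smp}(\partial X'_\infty)$, $k_{X,\infty}$ is constant along rays, and $d\varphi$ is the homogeneous-degree-one extension of the normalized $\varphi'|_{\partial X'_\infty}$, the equality $k_{X,\infty}(x)=k_{Y,\infty}(d\varphi(x))$ passes from the unit spheres to the full cones, and the identity $d\varphi(C_{\mathrm{Smp}}(X,\infty))=C_{\mathrm{Smp}}(Y,\infty)$ follows; this gives the "in particular" statement once one notes $d\varphi$ and its inverse are Lipschitz by Theorem \ref{inv_cones}. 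The stated equality $k_{X,\infty}(x)=k_{Y,\infty}(d\varphi(x))$ for $x\in C_{\mathrm{Smp}}(X,\infty)\cap d\varphi^{-1}(C_{\mathrm{Smp}}(Y,\infty))$ is then automatic, and in fact the intersection is all of $C_{\mathrm{Smp}}(X,\infty)$.

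The main obstacle I anticipate is the second step: verifying carefully that the lipeomorphism-at-infinity $\varphi$ really does extend to a homeomorphism of strict transforms whose boundary restriction coincides with the map $d\varphi$ produced by Theorem \ref{inv_cones} — i.e. that "$d\varphi$ exists" and "$\varphi$ is blow-spherical at $\infty$" are two sides of the same construction, not merely compatible facts. This is essentially the content of the proof of Corollary 2.11 in \cite{SampaioS:2022}, so I would cite that, but one must be slightly careful that the definition of blow-spherical homeomorphism used here (extension of $\rho_q^{-1}\circ\varphi\circ\rho_p$ to $X'_p\to Y'_q$) matches the one implicit in \cite{SampaioS:2022}; a short lemma spelling out that a bi-Lipschitz map at infinity always induces such an extension (using that $\|\varphi(x)\|/\|x\|$ is bounded above and below away from $0$ and $\infty$) would close the gap cleanly. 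Everything else — the reduction to $K=\tilde K=\emptyset$ and the homogenization in the fourth step — is routine.
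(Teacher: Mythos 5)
Your reduction to Proposition \ref{propinvarianciamultiplicidaderelativa} has a genuine gap at exactly the step you flag as the main obstacle, and the fix you sketch does not close it. To apply that proposition you need $\varphi$ to be a blow-spherical homeomorphism at infinity, i.e.\ you need $\rho_\infty^{-1}\circ\varphi\circ\rho_\infty$ to extend to a homeomorphism $\varphi'\colon X'_\infty\to Y'_\infty$ of the full strict transforms. What Theorem \ref{inv_cones} (and the construction in \cite{SampaioS:2022}, going back to \cite{Sampaio:2016} via an Arzel\`a--Ascoli argument) actually provides is only a sequence $t_j\to+\infty$ along which $\varphi(t_jv)/t_j$ converges to $d\varphi(v)$; the full limit of $\varphi(tv)/t$ as $t\to+\infty$, equivalently the convergence of the spherical coordinate $\varphi(x)/\|\varphi(x)\|$ along rays, need not exist for a bi-Lipschitz map. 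Your proposed lemma --- that $\|\varphi(x)\|/\|x\|$ is bounded above and below --- controls only the radial coordinate $1/\|\varphi(x)\|$ of the blow-up and says nothing about convergence of the direction, so it does not produce the extension $\varphi'$. Bi-Lipschitz maps are not in general blow-spherical, which is precisely why the paper proves Theorem \ref{multiplicities} by a separate argument instead of deducing it from Proposition \ref{propinvarianciamultiplicidaderelativa}.

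The paper's proof works only with the ``sequential'' strict transform $\overline{SX}$ built from the subsequence $S=\{t_j\}$, on which the needed extension does exist, and then compares multiplicities directly: it chooses a generic linear projection $\pi$, interprets $k_{X,\infty}(X_j)$ as the number of sheets of $\pi|_X$ over a cone neighborhood $C_{\eta,R}(v')$ avoiding the ramification loci, and derives a contradiction from inner-distance estimates --- two points $t_jx_j$, $t_jy_j$ lying in different sheets satisfy $d_X(t_jx_j,t_jy_j)\geq Ct_j$, whereas if their images landed in a single sheet of $Y$ the lipeomorphism would force $d_X(t_jx_j,t_jy_j)=o_\infty(t_j)$. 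Some argument of this kind (or an appeal to the inversion $\iota$ and the local statement \cite[Theorem 4.2]{Sampaio:2021}, as in the remark following the theorem) is required; the route through Proposition \ref{propinvarianciamultiplicidaderelativa} does not go through as written.
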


\begin{proof}
We closely follow the proof of Theorem 4.2 in \cite{Sampaio:2021}, but because its importance in this article, we present the proof here.

By making identifications $X\setminus K\longleftrightarrow X\setminus K \times\{0\}$ and $Y\setminus \tilde K\longleftrightarrow \{0\}\times Y\setminus \tilde K$, we can assume that $m=k$ and $\varphi$ is globally defined in $\R^m$. 
We know that there exist a sequence of positive real numbers $S=\{t_j\}_{j\in\N}$ and a lipeomorphism $d\varphi\colon C(X,\infty)\to C(Y,\infty)$ such that
$$t_j\to +\infty \quad\mbox{and} \quad \frac{\varphi(t_jv)}{t_j}\to d\varphi(v), \quad \forall v\in C(X,\infty).$$
For more details, see \cite{Sampaio:2016}, \cite{FernandesS:2020}, \cite{FernandesS:2023} and \cite{SampaioS:2022}.

Thus, $\rho^{-1}\circ\varphi\circ\rho \colon SX\rightarrow Y'$ is an injective and continuous mapping that continuously extends to a mapping $\varphi'\colon \overline{SX}\to Y',$ where $\rho=\rho_{\infty}$ and $$SX=\{(x,s)\in \mathbb{S}^{m-1}\times (0,+\infty);\,\textstyle\frac{1}{s}\cdot x \in X \mbox{ and }\frac{1}{s}\in S\}.$$

We note that $Smp(\partial X'_{\infty})=\emptyset$ (resp. $Smp(\partial Y'_{\infty})=\emptyset$) if and only if $\dim C(X,\infty)<\dim X$ (resp. $\dim C(Y,\infty)<\dim Y$). Since by Theorem \ref{inv_cones}, $\dim C(X,{\infty})=\dim C(Y,{\infty})$, then we obtain that $Smp(\partial X'_{\infty})=\emptyset$ if and only if $Smp(\partial Y'_{\infty})=\emptyset$. However, when $Smp(\partial X'_{\infty})=Smp(\partial Y'_{\infty})=\emptyset$, we have nothing to do. Thus, we can assume that $Smp(\partial X'_{\infty})\not=\emptyset$ and, thus, $Smp(\partial Y'_{\infty})\not=\emptyset$ as well. 
Further, $C_{{\rm Smp}}(X,{\infty})$ (resp. $C_{{\rm Smp}}(Y,{\infty})$)  is a dense subset in the $n$-dimensional part of $C(X,{\infty}) $ (resp. $C(Y,{\infty})$). Therefore, $d\varphi (C_{{\rm Smp}}(X,{\infty}))\cap C_{{\rm Smp}}(Y,{\infty})$ is a dense subset in the $n$-dimensional part of $C(Y,{\infty})$ and $C_{{\rm Smp}}(Y,{\infty})$.

Let $X_1,\cdots, X_r$ be the connected components of $C_{{\rm Smp}}(X,{\infty})$ and let $Y_1,\cdots, Y_s$ be the connected components of $C_{{\rm Smp}}(Y,{\infty})$. For each point $x\in X_j$, we know that $k_{X,{\infty}}(X_j)=k_{X,{\infty}}(x)$ is the number of connected components of the germ $(\rho^{-1}_{\infty}(X)\cap B_{\delta }^{m+1}(x),x)$. Then, $k_{X,{\infty}}(x)$ can be seen as the number of connected components of the set $(SX\cap \mathbb{S}^{m-1}\times \{t_j\})\cap B_{\delta }^{m+1}(x)$ for all sufficiently large $k$.

Let $\pi\colon\R^{m}\to \R^n$ be a linear projection such that 
$$\pi^{-1}(0)\cap(C(X,{\infty})\cup C(Y,{\infty}))=\{0\}.$$ 
Let ramification loci of 
$$\pi_{| X}\colon X\to \R^n \quad \mbox{and} \quad \pi_{| C(X,{\infty})}\colon C(X,{\infty})\to \R^n$$ 
be $\sigma(X)$ and $\sigma(C(X,{\infty}))$ respectively. Similarly, we define $\sigma(Y)$ and $\sigma(C(Y,{\infty}))$.

Let $Z=\sigma(X)\cup \sigma(C(X,{\infty}))$, $W=\sigma(Y)\cup \sigma(C(Y,{\infty}))$, and 
$$
\Sigma=Z\cup C(Z,{\infty})\cup \pi(d\varphi^{-1} (\pi|_{Y\cup C(Y,{\infty})}^{-1}(W\cup C(W,{\infty}))).
$$ 
Since $\pi|_{Y\cup C(Y,{\infty})}$ is a proper mapping, $d\varphi$ is a lipeomorphism, and $\dim Z\cup W\leq n-1$, we obtain that $\dim \Sigma\leq n-1$. Thus, we obtain that if $v'\in \R^n\setminus \Sigma$ then $\pi|_X^{-1}(v')\subset \R^{m}\setminus \pi|_X^{-1}(Z\cup C(Z,{\infty}))$ and for any $v_i\in \pi|_X^{-1}(v')$, we have that $w'_i=\pi(d\varphi(v_i))\in \R^n\setminus (W\cup C(W,{\infty}))$. For $\eta,R >0$, we define the following set 
$$
C_{\eta,R }(v')=\{w\in \R^n|\, \exists t>0; \|tv'-w\|<\eta t\}\setminus B_{R }^{n}(0).
$$

Let $\eta,R >0$ be sufficiently large such that 
$C_{\eta,R }(v')\subset \R^n\setminus (Z\cup C(Z,{\infty}))$.
Then, we obtain that $\pi|_V\colon V\to C_{\eta,R }(v')$ is a lipeomorphism for each connected component $V$ of  $\pi^{-1}(C_{\eta,R}(v'))\cap X$. Therefore, for each $j=1,\dots,r$, there are different connected components $V_{j1},\dots,V_{jk_X(X_j)}$ of $\pi^{-1}(C_{\eta,R }(v'))\cap X$ such that 
$$C(\overline{V_{ji}},{\infty})\setminus \{0\}\subset X_j, \quad \forall i\in \{1,...,k_{X,{\infty}}(X_j)\}.$$

Let $\rho_{\infty}\colon\mathbb{S}^{m-1}\times (0,+\infty )\to \R^{m}$ be the spherical blow-up at infinity. Fixed $j\in\{1,\cdots,r\}$, let us suppose that there is $q\in\{1,\cdots,s\}$ such that $d\varphi (X_j)\cap Y_q\not=\emptyset$ and $k_{X,\infty}(X_j)>k_{Y,\infty}(Y_q)$, it means that if we consider a point $x=(v,0)\in (X_j\cap \mathbb{S}^{m-1})\times \{0\}$ with $d\varphi(x)\in Y_q$ and, since $\dim \Sigma \leq n-1$, we can assume that $\pi(v)\not\in \Sigma$, then there are at least two different connected components $V_{ji}$ and $V_{jl}$ of $\pi^{-1}(C_{\eta,R }(\pi(v)))\cap X$ and sequences $\{(x_j,t_j^{-1})\}_{j\in \N}\subset \rho^{-1}(V_{ji})\cap SX$ and $\{(y_j,t_j^{-1})\}_{j\in \N}\subset \rho^{-1}(V_{jl})\cap SX$ such that $\lim (x_j,t_j^{-1})=\lim (y_j,t_j^{-1})=x$ and $\varphi'(x_j,t_j^{-1}),\varphi'(y_j,t_j^{-1})\in \rho^{-1}(\widetilde V_{q\ell})$ for all $j\in \N$, where $\widetilde V_{q\ell}$ is a connected component of  $\pi^{-1}(C_{\tilde\eta,\tilde R }(\pi(d\varphi(v))))\cap Y$ for some $\tilde\eta,\tilde R >0$ such that 
$$
C_{\tilde\eta,\tilde R }(w')=\{\tilde w\in\R^n|\, \exists t>0; \|tw'-\tilde w\|<\tilde\eta t\}\setminus B_{\tilde R }^{n}(0)
$$
satisfies $C_{\tilde\eta,\tilde R }(w')\subset \R^n\setminus (W\cup C(W,{\infty}))$, where $w'=\pi(d\varphi(v))$. In particular,  $\pi|_{\widetilde V}\colon \widetilde V\to C_{\tilde\eta,\tilde R }(w')$ is a lipeomorphism, where $\widetilde V=\widetilde V_{q\ell}$.
Since $\varphi(t_jx_j),\varphi(t_jy_j)\in\widetilde V$ for all $j\in \N$, we have 
$$\|\varphi(t_jx_j)-\varphi(t_jy_j)\|=o_{\infty}(t_j)$$ 
and
$$d_{Y}(\varphi(t_jx_j),\varphi(t_jy_j))\leq d_{\widetilde V}(\varphi(t_jx_j),\varphi(t_jy_j))=o_{\infty}(t_j),$$
where $g(t_j)=o_{\infty}(t_j)$ means that $\lim\limits_{j\to +\infty}\frac{g(t_j)}{t_j}=0$.
Now, since  $X$ is lipeomorphic at infinity to $Y$, we obtain $d_{X}(t_jx_j,t_jy_j)\leq o_{\infty}(t_j)$.
On the other hand, since $t_jx_j$ and $t_jy_j$ lie in different connected components of $\pi^{-1}(C_{\eta,R }(\pi(v)))\cap X$, there exists a constant $C>0$ such that $d_{X}(t_jx_j,t_jy_j)\geq Ct_j$, which is a contradiction.

We have proved that $k_{X,{\infty}}(X_j)\leq k_{Y,{\infty}}(Y_q)$. By similar arguments, by using that $\varphi^{-1}$ is a lipeomorphism, we can also prove $k_{Y,{\infty}}(Y_q)\leq k_{X,{\infty}}(X_j)$. Therefore, $k_{X,{\infty}}(X_j)=k_{Y,{\infty}}(Y_q)$ for any $q\in\{1,\cdots,s\}$ such that $d\varphi (X_j)\cap Y_q\not=\emptyset$. Then
$$
k_{X,\infty}(x)=k_{Y,\infty}(d\varphi(x)),\quad \forall x\in C_{{\rm Smp}}(X,\infty)\cap d\varphi^{-1}(C_{{\rm Smp}}(Y,\infty)).
$$
\end{proof}

\begin{remark}
Since the inversion mapping $\iota\colon \R^{m}\setminus\{0\}\to \R^{m}\setminus\{0\}$, given by $\iota(x)=\frac{x}{\|x\|^2}$, is a blow-spherical homeomorphism at infinity, for $Z=\iota(X\setminus \{0\})$, we obtain that $C(Z,0)=C(X,\infty)$, $C_{\rm Smp}(X,\infty)=C_{\rm Smp}(Z,0)$ and $k_{X,\infty}(v)=k_{Z,0}(v)$, for all $v\in C_{\rm Smp}(X,\infty)$, and thus the o-minimal version of \cite[Theorem 4.2]{Sampaio:2021} follows from Theorem \ref{multiplicities} and \cite[Theorem 4.1]{Sampaio:2023b}.
\end{remark}

\subsection{Density at infinity}
Let $X\subset \mathbb{R}^{m}$ be a definable set. Fixed $n\in \mathbb{N}$ and $p\in X$, we define the function $\theta^n(X,p,\cdot)\colon (0,+\infty)\to X$ as follows:
$$
\theta^n(X,p,r) = \frac{\mathcal{H}^n(X\cap B_r^n(p))}{\mu_n r^n},
$$
where $\mu_n$ is the volume of the $n$-dimensional Euclidean unit ball, $\mathcal{H}^n(A)$ is the $n$-dimensional Hausdorff measure of $A$ and $B_r^m(p)\subset \mathbb{R}^m$ is the open Euclidean ball centered at $p$ of radius $r>0$. 

For simplicity, we denote $B_r^m:=B_r^m(0)$.
\begin{definition}
Let $X\subset \mathbb{R}^{n+k}$ be a set. We say that $X$ has {\bf $n$-dimensional density at $p$} and we denote it by $\theta^n(X,p)$, if the limit exists:
$$
\theta^n(X,p):= \lim_{r\rightarrow 0^+}{\theta^n(X,p,r)}.
$$
\end{definition}

\begin{definition}
Let $X\subset \mathbb{R}^{n+k}$ be a set. We say that $X$ {\bf has $n$-dimensional density at infinity} and we denote it by $\theta^n(X)$, if the limit exists:
$$
\theta^n(X):= 
\lim_{r\rightarrow +\infty}\frac{\mathcal{H}^n(X\cap B_r^{n+k})}{\mu_n r^n}.
$$
When $X$ has $n$-dimensional density at infinity and $n=\dim X$, we say that $X$ {\bf has density at infinity}.
\end{definition}

\section{Kurdyka-Raby's formula at infinity}\label{section:K-R_formula}

The goal of this section is to bring the Theorem of Kurdyka and Raby in \cite{Kurdyka:1989} and its ideas to a global perspective. Then, we prove the following Kurdyka-Raby's formula at infinity:

\begin{theorem}[Kurdyka-Raby's formula at infinity]\label{thm:densitymult}
Let $X\subset \mathbb \R^{n+k}$ be a definable set in an o-minimal structure on $\R$ with $n=\dim X$. Let $C_1,...,C_m$ be the connected components of $C_{{\rm Smp}}(X,\infty)$. Then, $\theta^n(X)$ is defined and for each $x\in \R^{n+k}$, 
$$
\theta^n(X)=\lim \limits _{r\to +\infty }\frac{\mathcal{H}^n(X\cap B_r^{n+k}(x))}{\mu_n r^n}=\sum\limits _{j=1}^mk_{X,\infty}(C_j)\frac{\mathcal{H}^{n}(C_j\cap B_1^{n+k})}{\mu_n}.
$$
\end{theorem}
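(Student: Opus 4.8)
The plan is to transfer the classical Kurdyka--Raby formula (which computes the local density of a definable set at a point in terms of relative multiplicities along the components of the tangent cone) to the situation at infinity, using the inversion trick that already appears in the remark following Theorem~\ref{multiplicities}. Concretely, let $\iota\colon \R^{n+k}\setminus\{0\}\to \R^{n+k}\setminus\{0\}$, $\iota(x)=x/\|x\|^2$, and set $Z=\iota(X\setminus\{0\})$ (adding $0$ to $Z$ if $X$ is unbounded, so that $0\in\overline{Z}$). Since $\iota$ is a definable diffeomorphism, $Z$ is definable with $\dim Z=n$; since $\iota$ is a blow-spherical homeomorphism at infinity, the remark cited above gives $C(Z,0)=C(X,\infty)$, $C_{\rm Smp}(Z,0)=C_{\rm Smp}(X,\infty)$, and $k_{Z,0}(v)=k_{X,\infty}(v)$ for every $v\in C_{\rm Smp}(X,\infty)$; in particular $C_1,\dots,C_m$ are exactly the connected components of $C_{\rm Smp}(Z,0)$ and the relative multiplicities match. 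Thus the classical Kurdyka--Raby theorem applied to $Z$ at $0$ yields that $\theta^n(Z,0)$ is defined and equals $\sum_{j=1}^m k_{Z,0}(C_j)\,\mathcal H^n(C_j\cap B_1^{n+k})/\mu_n = \sum_{j=1}^m k_{X,\infty}(C_j)\,\mathcal H^n(C_j\cap B_1^{n+k})/\mu_n$, which is the right-hand side of the desired identity.

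It then remains to prove the first equality, namely that $\theta^n(X)$ exists and agrees with $\theta^n(Z,0)$, and moreover that the ball can be recentered at an arbitrary $x\in\R^{n+k}$ without changing the limit. For the recentering, I would argue that for $r$ large, $X\cap B_r^{n+k}(x)$ and $X\cap B_r^{n+k}(0)$ differ only inside an annular region whose $\mathcal H^n$-measure is $o(r^n)$: indeed, since $\dim X=n$, the definable function $r\mapsto \mathcal H^n(X\cap B_r^{n+k}(0))$ has polynomial growth of degree $\le n$ (by o-minimal cell decomposition / the existence of the tangent cone at infinity), and $B_r^{n+k}(x)\,\triangle\,B_r^{n+k}(0)\subset B_{r+\|x\|}^{n+k}(0)\setminus B_{r-\|x\|}^{n+k}(0)$, so $\mathcal H^n\big(X\cap(B_{r+\|x\|}\setminus B_{r-\|x\|})\big) = O(r^{n-1})=o(r^n)$. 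Hence all the centered limits coincide, and it suffices to treat $x=0$.

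For the equality $\theta^n(X)=\theta^n(Z,0)$ I would compute directly via the change of variables $\iota$, using that for $\rho>0$ the image $\iota(B_{1/\rho}^{n+k}\setminus\{0\})=\R^{n+k}\setminus \overline{B_\rho^{n+k}}$ maps the ``outside of a large ball'' picture for $X$ to the ``inside of a small ball'' picture for $Z$. Since $\iota$ is not an isometry, one cannot equate the measures directly; instead I would exploit that $\iota$ is close to a homothety at small/large scales. More precisely, rescaling $X$ by $1/t$ and letting $t\to\infty$, $\frac1t X$ converges (in the appropriate definable/Hausdorff sense, using the strict transform $X'_\infty$) to pieces of $C(X,\infty)$ with the multiplicities $k_{X,\infty}$, so $\mathcal H^n(X\cap B_t^{n+k})/(\mu_n t^n)\to \sum_j k_{X,\infty}(C_j)\mathcal H^n(C_j\cap B_1^{n+k})/\mu_n$ by an o-minimal Lebesgue-type argument; symmetrically for $Z$ at $0$. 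Alternatively, and more cleanly, I would simply invoke that the integral $\int_{X\cap B_r}1\,d\mathcal H^n$ is a definable function of $r$, hence has a Puiseux expansion at $r=\infty$, so the limit defining $\theta^n(X)$ automatically exists; then the inversion identifies its value with $\theta^n(Z,0)$ by matching the leading Puiseux coefficients, since $\iota$ preserves the relevant asymptotic cone data.

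The main obstacle is the last step: making rigorous that the inversion $\iota$ does not distort the leading-order $\mathcal H^n$-volume asymptotics, i.e. that $\lim_{r\to\infty}\mathcal H^n(X\cap B_r)/(\mu_n r^n)=\lim_{\rho\to 0^+}\mathcal H^n(Z\cap B_\rho)/(\mu_n\rho^n)$. The safe route is to avoid comparing $X$ and $Z$ directly and instead prove each side separately equals $\sum_j k(C_j)\mathcal H^n(C_j\cap B_1)/\mu_n$ by the Kurdyka--Raby argument adapted to the cone at infinity: choose a generic linear projection $\pi\colon\R^{n+k}\to\R^n$ as in the proof of Theorem~\ref{multiplicities}, so that over the complement of a definable set of dimension $\le n-1$ the restriction $\pi|_X$ is a finite covering whose sheet count over a cone $C_{\eta,R}(v')$ equals $\sum_j k_{X,\infty}(C_j)$ on the part landing in $C_j$; then the co-area formula gives $\mathcal H^n(X\cap B_r) = \sum_j k_{X,\infty}(C_j)\,\mathcal L^n(\pi(C_j)\cap B_r) + o(r^n)$, and $\mathcal L^n(\pi(C_j)\cap B_r)/r^n\to$ the corresponding spherical measure of $C_j$, which after unwinding equals $\mathcal H^n(C_j\cap B_1^{n+k})/\mu_n\cdot\mu_n$ — i.e. exactly the claimed term. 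This is the heart of the matter and mirrors line by line the contradiction argument already carried out for Theorem~\ref{multiplicities}.
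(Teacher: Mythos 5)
Your strategy --- invert at the origin via $\iota(x)=x/\|x\|^2$ and pull back the local Kurdyka--Raby formula --- is genuinely different from the paper's proof, which never leaves infinity: the authors first establish the claim for open definable subsets of $\R^n$ by a direct cone-approximation argument, then obtain the existence of $\theta^n(X)$ by decomposing $X$ into finitely many graphs $\Gamma_i^{\epsilon}$ of $M\epsilon$-Lipschitz definable maps over LNE open definable subsets of $n$-planes (in the style of Stasica and Kurdyka--Parusi\'nski), and finally derive the multiplicity formula from a stratification compatible with the $C_j$ and with the cones at infinity of the pieces. Your route would be shorter if it worked, but as written it has three genuine gaps.

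First, you invoke ``the classical Kurdyka--Raby theorem'' for $Z$ at $0$. Kurdyka--Raby is a theorem about subanalytic sets; for an arbitrary o-minimal structure the existence of the local density, let alone the formula involving the relative multiplicities $k_{Z,0}$ defined via the spherical blow-up, is precisely the kind of statement this paper has to prove from scratch, so the main input of your argument is not available off the shelf. Second, your ``cleaner'' alternative --- that $r\mapsto\mathcal{H}^n(X\cap B_r^{n+k})$ is a definable function of $r$ and hence has a Puiseux expansion at infinity --- is unjustified and in general false: integration does not preserve definability in an o-minimal structure (even for globally subanalytic sets one only gets log-analytic volumes), so the automatic existence of the limit cannot be obtained this way. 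Third, the identity $\theta^n(X)=\theta^n(Z,0)$, which you correctly flag as the main obstacle, is never actually established; it is true (the inversion is conformal with factor $\|x\|^{-2}$, so an annular decomposition together with an Abelian summation argument gives it \emph{once one of the two limits is known to exist}), but the existence of either limit is exactly what is at stake. Your final ``safe route'' abandons the inversion and sketches a generic-projection, sheet-counting argument; that is essentially the direct proof the paper carries out, and at that point the inversion buys nothing. To salvage your approach you would need to (i) actually prove the local o-minimal Kurdyka--Raby formula, which is essentially the same work as the paper's proof, and (ii) supply the Abelian comparison between the density of $X$ at infinity and the density of $Z$ at $0$.
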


\subsection{Proof of the Kurdyka-Raby's formula at infinity}
\begin{proof}[Proof of Theorem \ref{thm:densitymult}]
We first prove that the result is true for open definable sets.
\begin{claim}\label{propdensitycone}
Let $\Omega \subset \mathbb{R}^{n}$ be an open definable set. Then, the density at infinity satisfies
$$\lim_{r\rightarrow +\infty}{\frac{\mathcal{H}^{n}\left(\Omega\cap B_r^{n}\right)}{r^{n}}}=\mathcal{H}^{n}\left(B_1^{n}\cap C(\Omega,\infty)\right).$$
\end{claim}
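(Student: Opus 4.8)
The plan is to compute the left-hand side in polar coordinates and then identify the resulting limit with the volume of the tangent cone. If $\Omega=\emptyset$ there is nothing to prove, so assume $\Omega\ne\emptyset$. For each $\omega\in\mathbb S^{n-1}$ the set $\{t>0:t\omega\in\Omega\}$ is definable, hence a finite union of points and open intervals, so either $t\omega\in\Omega$ for all large $t$ or $t\omega\notin\Omega$ for all large $t$. Let
$$
U:=\{\omega\in\mathbb S^{n-1}\ :\ t\omega\in\Omega\text{ for all sufficiently large }t\},
$$
which is definable. Writing Lebesgue measure on $\R^n$ in polar coordinates gives
$$
\frac{n}{r^{n}}\,\mathcal H^{n}(\Omega\cap B_r^{n})=\int_{\mathbb S^{n-1}}\varphi_r(\omega)\,d\mathcal H^{n-1}(\omega),\qquad\varphi_r(\omega):=\frac{n}{r^{n}}\int_0^{r}\chi_{\Omega}(t\omega)\,t^{n-1}\,dt,
$$
with $0\le\varphi_r\le1$. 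Splitting the inner integral at the radius beyond which $\chi_{\Omega}(t\omega)$ is constant shows $\varphi_r(\omega)\to1$ for $\omega\in U$ and $\varphi_r(\omega)\to0$ for $\omega\notin U$, so by dominated convergence
$$
\lim_{r\to+\infty}\frac{\mathcal H^{n}(\Omega\cap B_r^{n})}{r^{n}}=\frac1n\,\mathcal H^{n-1}(U).
$$

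It then remains to match this with the right-hand side. Since $C(\Omega,\infty)$ is a cone and, by Remark \ref{remarksimplepointcone}, definable, one has $\mathcal H^{n}\!\left(B_1^{n}\cap C(\Omega,\infty)\right)=\frac1n\mathcal H^{n-1}\!\left(C(\Omega,\infty)\cap\mathbb S^{n-1}\right)$, so it suffices to show that $U$ and $C(\Omega,\infty)\cap\mathbb S^{n-1}$ differ by an $\mathcal H^{n-1}$-null set. The inclusion $U\subset C(\Omega,\infty)\cap\mathbb S^{n-1}$ is immediate, since for $\omega\in U$ the points $x_i=i\omega$ eventually lie in $\Omega$, tend to infinity, and satisfy $\frac1i x_i\to\omega$. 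Hence everything reduces to proving that the definable set $V:=(C(\Omega,\infty)\cap\mathbb S^{n-1})\setminus U$ has dimension at most $n-2$.

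This dimension bound is the heart of the matter. I would argue by contradiction: if $\dim V=n-1$, then $V$, being a full-dimensional definable subset of the $(n-1)$-manifold $\mathbb S^{n-1}$, contains a relatively open cap $P$. For each $\omega\in P\subset V$ the ray $\R_{>0}\,\omega$ is eventually outside $\Omega$; let $h(\omega)$ be the infimum of the $R$ such that $t\omega\notin\Omega$ for all $t>R$. The function $h$ is definable on $P$, hence continuous on a dense open subset of $P$ and therefore bounded by some $R_1<\infty$ on a small closed cap $\overline Q$ inside that subset. Then the truncated open cone $\Gamma:=\{t\omega:\omega\in Q,\ t>R_1\}$ is disjoint from $\Omega$; but fixing $\omega_1\in Q$ and using $\omega_1\in C(\Omega,\infty)$ we obtain $x_i\in\Omega$ with $\|x_i\|\to\infty$ and $x_i/\|x_i\|\to\omega_1$, so that $x_i\in\Gamma\cap\Omega$ for $i$ large --- a contradiction. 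Thus $\dim V\le n-2$, hence $\mathcal H^{n-1}(V)=0$, and combining with the two displays above proves the claim. The ingredients beyond this are all routine o-minimality (finiteness of the fibres $\{t:t\omega\in\Omega\}$, generic continuity of $h$, and the fact that a full-dimensional definable subset of $\mathbb S^{n-1}$ has nonempty interior); the genuine obstacle is precisely establishing the non-accumulation bound $\dim V\le n-2$.
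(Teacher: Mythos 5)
Your proof is correct, and it reaches the conclusion by a route that is different in execution from the paper's, though it rests on the same underlying radial idea. The paper sandwiches $\Omega$ between an increasing family of inner cones $Z_r$ (built from $\alpha(z)=\sup\{\lambda\ge 0:\lambda z\notin\Omega\}$) and a decreasing family of outer cones $C_r={\rm Cone}_0(\Omega\setminus B_r^n)$, and obtains the limit from the scaling of cone volumes together with monotone continuity of measure, the key measure-theoretic inputs being $\bigcap_{r>0}\overline{C_r}=C(\Omega,\infty)$ and $\mathcal{H}^n(B_1^n\cap C_r)=\mathcal{H}^n(B_1^n\cap\overline{C_r})$. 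You instead integrate in polar coordinates and apply dominated convergence on the sphere, which yields in one stroke that the limit exists and equals $\tfrac1n\mathcal{H}^{n-1}(U)$, where $U$ is the set of directions whose rays eventually lie in $\Omega$; your $U$ is exactly the spherical trace of the paper's set $Z=\bigcap_{r>0}C_r$, since o-minimality of the fibre $\{t:t\omega\in\Omega\}$ converts ``meets $\Omega$ arbitrarily far out'' into ``eventually contained in $\Omega$'' (this is the content of the paper's Claim about $Z\setminus\{0\}=\bigcup_r Z_r$). The remaining identification $\mathcal{H}^{n-1}(U)=\mathcal{H}^{n-1}(C(\Omega,\infty)\cap\mathbb{S}^{n-1})$, which the paper gets from the negligibility of $\overline{C_r}\setminus C_r$, you obtain from the dimension bound $\dim\bigl((C(\Omega,\infty)\cap\mathbb{S}^{n-1})\setminus U\bigr)\le n-2$, proved by a clean contradiction using generic continuity of the definable radius function $h$; you are right that this is where the genuine o-minimal content sits. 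Your version buys a more self-contained and arguably more transparent computation of the limit (no separate $\liminf$/$\limsup$ estimates), at the cost of invoking the polar-coordinate Fubini decomposition and the full-dimensional-implies-nonempty-interior fact; the paper's version stays closer to elementary cone comparisons.
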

\begin{proof}[Proof of the Claim \ref{propdensitycone}]
First, note that $\theta^{n}(\Omega)$ there exists if and only if there exists $\theta^{n}(\Omega\setminus K)$, where $K\subset \Omega$ is a compact subset. Moreover, in this case, $\theta^{n}(\Omega)=\theta^{n}(\Omega\setminus K)$. In fact, %for every $r>0$, we have:

\begin{eqnarray*}
	\theta^{n}(\Omega) & = & \lim_{r\rightarrow +\infty}{\frac{\mathcal{H}^{n}(\Omega\cap B_r^{n})}{\mu_nr^{n}}}\\
	& = &  \lim_{r\rightarrow +\infty}{\frac{\mathcal{H}^{n}(\Omega\cap K\cap B^{n}_r) + \mathcal{H}^{n}((\Omega\setminus K)\cap B^{n}_r)}{\mu_nr^{n}}}\\
	& = & \lim_{r\rightarrow +\infty}{\frac{\mathcal{H}^{n}((\Omega\setminus K)\cap B^{n}_r)}{\mu_nr^{n}}}\\
	& = & \theta^{n}(\Omega\setminus K).
\end{eqnarray*}

When $C$ is a cone with the vertex at $0$ in $\mathbb{R}^{n}$, we have
$$\mathcal{H}^{n}\left(C\cap B_r^{n}\right) = r^{n}\mathcal{H}^{n}\left(C\cap B^{n}_1\right),$$
for every $r>0$. Now, %let's place ourselves at a point $y\in \overline{\Omega}$, which we assume to be the origin of $\mathbb{R}^n$, and
we consider the cone $C_{r}:=\mbox{Cone}_0\left(\Omega \setminus B_r^{n}\right)$.

\begin{claim}\label{claimone}
	$\bigcap_{r>0}{\overline{C_r}}=C(\Omega,\infty)$.
\end{claim}
\begin{proof}[Proof of the Claim \ref{claimone}]

Let $v\in \bigcap_{r>0}{\overline{C_r}}$. Then, $v\in C_k$ for every $l\in \mathbb{N}$. Therefore, $v=t_lx_l$, where $x_l\in \Omega\setminus B^{n}_{l}$, with $\|x_l\|\geq l$, and $t_l\in (0,+\infty)$. Hence, $v\in C(\Omega,\infty)$.

Conversely, if $v\in C(\Omega,\infty)$, it follows from \cite[Proposition 2.15]{FernandesS:2020} that there exists a continuous and definable curve $\alpha\colon (\epsilon,+\infty)\rightarrow \Omega$ such that $\alpha(t)=tv+o_{\infty}(t)$ with $\|\alpha(t)\|>t$, where $g(t)=o_{\infty}(t)$ means that $\lim\limits_{t\to +\infty}\frac{g(t)}{t}=0$. Now, define $\beta(t)=\frac{\alpha(t)}{t}=v+\frac{o_{\infty}(t)}{t}$, and note that $\|\beta(t)-v\|\rightarrow 0$ as $t\rightarrow +\infty$. Since $\alpha(t)\subset C_t$, we have $\frac{\alpha(t)}{t}\in C_t$, and therefore $\beta(t)\in C_t$. Thus,
$$\text{dist}(v,C_t)\leq \|v-\beta(t)\|,$$
for every $t\geq 0$. However, $C_t\subset C_s$ when $s<t$. Then for any $s>0$ and $s<t$, we have 
$$\text{dist}(v,\overline{C_s})\leq \text{dist}(v,\overline{C_t})\leq \|v-\beta(t)\|\rightarrow 0,$$
as $t\rightarrow +\infty$. Hence, $v\in \overline{C_{s}}$ for any $s>0$ and, consequently, $v\in \bigcap_{r>0}{\overline{C_r}}$.
\end{proof}

Now, consider $Z=\bigcap_{r>0}{C_{r}}$, and it follows from the claim that $C(\Omega,\infty)\supset Z$.

Moreover, the functions $r \mapsto \mathcal{H}^{n}\left(C_{r}\cap B_1^{n}\right)$ and $r \mapsto \mathcal{H}^{n}\left(\overline{C_{r}}\cap B_1^{n}\right)$ are nonincreasing; hence, the following limits exist:
\begin{eqnarray*}
	\lim_{r\rightarrow +\infty}{\frac{\mathcal{H}^{n}\left(C_{r}\cap B^{n}_r\right)}{r^{n}}} & = & \lim_{r \rightarrow +\infty}{\mathcal{H}^{n}\left(C_{r}\cap B^{n}_1\right)}\\
	& = & \mathcal{H}^{n}\left(B^{n}_1\cap \bigcap_{r>0} C_{r}\right)\\
	& = & \mathcal{H}^{n}\left(B^{n}_1\cap Z\right)
\end{eqnarray*}
and

\begin{eqnarray*}
	\lim_{r\rightarrow +\infty}{\frac{\mathcal{H}^{n}\left(\overline{C_{r}}\cap B^{n}_r\right)}{r^{n}}} & = & \lim_{r \rightarrow +\infty}{\mathcal{H}^{n}\left(\overline{C_{r}}\cap B^{n}_1\right)}\\
	& = & \mathcal{H}^{n}\left(B^{n}_1\cap \bigcap_{r>0} \overline{C_{r}}\right)\\
	& = & \mathcal{H}^{n}(B^{n}_1\cap C(\Omega,\infty)).
\end{eqnarray*}

Since $C_{r}$ is a definable set, we have
$$\mathcal{H}^{n}\left(B_1^{n} \cap C_{r} \right)=\mathcal{H}^{n}\left(B_1^{n} \cap \overline{C}_{r} \right).$$

Therefore, $\mathcal{H}^{n}(B^{n}_1\cap C(\Omega,\infty))=\mathcal{H}^{n}(B^{n}_1\cap Z)$.

Moreover, for every $R>0$, we have $C_{R} \supset \Omega \setminus (\Omega\cap B^{n}_R)$. Thus,

\begin{eqnarray}\label{des.over}
\nonumber	\limsup_{r\rightarrow +\infty}{\frac{\mathcal{H}^{n}\left(\Omega\cap B^{n}_r\right)}{r^{n}}} & = & \limsup_{r\rightarrow +\infty}{\frac{\mathcal{H}^{n}\left((\Omega\setminus (\Omega\cap B^{n}_R))\cap B^{n}_r\right)}{r^{n}}}\\
	\nonumber& \leq & \lim_{r\rightarrow +\infty}{\frac{\mathcal{H}^{n}\left(C_r\cap B^{n}_r\right)}{r^{n}}}\\
	\nonumber& = & \mathcal{H}^{n}(Z\cap B^{n}_1)=\mathcal{H}^{n}(B^{n}_1\cap C(\Omega,\infty)).
\end{eqnarray}

For $w\in \R^{n}\setminus \{0\}$, we set $(w,+\infty):=\{tw;t>1\}$.
Now, take $z\in Z\setminus \{0\}$. Without any loss of generality, we assume that $0\not\in \Omega$.
Then, $(z,+\infty)\cap \Omega$ is a definable set in $\Omega$, and we define $\alpha(z)=\sup \left\{\lambda\geq 0; \lambda z\notin \Omega\right\}$, which is nonnegative. If $\alpha(z)\not =0$, the segment $(\alpha(z)z,+\infty)$ is contained in $\Omega$ and if $\alpha(z)=0$, then $(\epsilon z,+\infty)$ is contained in $\Omega$, for all $\epsilon>0$.

We define the following set
$$Z_{r}=\{z\in Z\setminus \{0\}/ \|\alpha(z)\cdot z\|\leq r\}.$$

Since $\alpha(\mu z)=\sup\{\lambda\geq 0; \lambda (\mu z)\notin \Omega\}=\lambda_0$ and $\alpha(z)=\sup\{\eta\geq 0; \eta z\notin \Omega\}=\eta_0$, we have $\lambda_0=\frac{\eta_0}{\mu}$, and $Z_r$ is indeed a cone.

Now, note that $Z_{r}\setminus \overline{B^{n}_r}\subset \Omega\setminus \overline{B^{n}_r}$. Indeed, if $z\in Z_{r}\setminus \overline{B^{n}_r}$, then $\|z\|>r$, and additionally, $\|\alpha(z)z\|\leq r$. Therefore, $\alpha(z)\leq \frac{r}{\|z\|}<1$. Note that for $r<s$, we have $Z_r\subset Z_s$. In fact, if $z\in Z_r$, then $\alpha(z)\|z\|\leq r<s$, so $z\in Z_s$.

\begin{claim}\label{claimtwo}
	$Z\setminus \{0\}=\bigcup_{r>0}{Z_r}$.
\end{claim}
\begin{proof}[Proof of the Claim \ref{claimtwo}]
It follows from the definition that $\bigcup_{r>0}{Z_r}\subset Z\setminus \{0\}$. Reciprocally, assume that $z\in Z\setminus \{0\}$. Thus for each $r>0$ there exists a points $z_r\in \Omega\setminus B^{n}_r$ and $\lambda_r\in (0,+\infty)$ such that $\lambda_r z_r=z$ and since $\Omega \setminus \overline{B^{n}_r}$ is open set there exists $t_{r_j}\in (0,+\infty)$ such that $t_{r_j}z_j\in \Omega\setminus \overline{B^{n}_r}$. So, if the semiline $\vec{0z}$ we have $(t_{r_j}z_j,+\infty):=\vec{0z}\setminus (\vec{0z}\cap B^{n}_{t_{r_j}})\subset \Omega$, we have $\alpha(z)<+\infty$. Finally, note that $(z,+\infty)\subset \Omega\setminus B_r^{n}$.

\end{proof}

Moreover, for $r>R>0$, we have

\begin{eqnarray}\label{des.denconezr}
\nonumber	\liminf_{r\rightarrow +\infty}{\frac{\mathcal{H}^{n}(\Omega\cap B^{n}_r)}{r^{n}}} & = &  \liminf_{r\rightarrow +\infty}{\frac{\mathcal{H}^{n}((\Omega\setminus B^{n}_R)\cap B^{n}_r)}{r^{n}}}\\
	\nonumber & \geq & \liminf_{r\rightarrow +\infty}{\frac{\mathcal{H}^{n}((Z_r\setminus  B^{n}_R)\cap B^{n}_r)}{r^{n}}}\\
	\nonumber & = & \liminf_{r\rightarrow +\infty}{\frac{\mathcal{H}^{n}(Z_r\cap B^{n}_r)-\mathcal{H}^{n}(Z_r\cap B^{n}_R)}{r^{n}}}\\
	\nonumber & = & \liminf_{r\rightarrow +\infty}{\left(\mathcal{H}^{n}(Z_r\cap B^{n}_1)-\frac{\mathcal{H}^{n}(Z\cap B^{n}_R)}{r^{n}}\right)}\\
	\nonumber	& = &  \mathcal{H}^{n}(Z\cap B^{n}_1)=\mathcal{H}^{n}(B^{n}_1\cap C(\Omega,\infty)).
\end{eqnarray}
Therefore, $\Omega$ has density at infinity and 
$$
\theta^n(\Omega)=\frac{\mathcal{H}^{n}(B^{n}_1\cap C(\Omega,\infty))}{\mu_n}=\theta^n(C(\Omega,\infty)).
$$
\end{proof}

Now we prove that $X$ has density at infinity. Recently, this was proved in \cite{NguyenS:2023}, but here we present a different proof. 

\begin{claim}\label{propositiondensityexists}
$X$ has density at infinity.
\end{claim}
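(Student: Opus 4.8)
The plan is to reduce the statement for a general definable set $X$ with $n=\dim X$ to the case of open definable subsets of $\R^n$, which was already settled in Claim \ref{propdensitycone}. First I would stratify $X$ into finitely many definable $C^1$ cells and discard the strata of dimension $<n$: by the fact that $\dim {\rm Sing}_1(X)<\dim X$ and that lower-dimensional definable sets have $n$-dimensional density at infinity equal to $0$ (their $n$-dimensional Hausdorff measure in $B_r^{n+k}$ grows like $o(r^n)$, which follows from the existence of a definable $C^1$-stratification together with the growth estimates for definable functions), it suffices to prove that the $n$-dimensional regular part $X_{\mathrm{reg}}:={\rm Reg}_1(X)$, intersected with the $n$-dimensional locus, has density at infinity. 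Then I would further decompose $X_{\mathrm{reg}}$ into finitely many connected definable $C^1$-submanifolds $M_1,\dots,M_N$ of dimension $n$, so that it is enough to prove each $M_i$ has density at infinity and the density is additive over this finite decomposition.

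For a single connected definable $n$-submanifold $M\subset\R^{n+k}$, I would choose, after a generic linear change of coordinates, a linear projection $\pi\colon\R^{n+k}\to\R^n$ whose restriction to $\overline{M}$ (and to $C(M,\infty)$) is proper with finite fibers away from a definable set of dimension $\le n-1$; such a $\pi$ exists by the usual genericity argument (this is exactly the kind of projection used in the proof of Theorem \ref{multiplicities}). The image $\Omega:=\pi(M)\setminus(\text{bad set})$ is then an open definable subset of $\R^n$, and over $\Omega$ the map $\pi|_M$ is a finite covering of some locally constant degree; partitioning $\Omega$ into the open definable pieces $\Omega_d$ where the degree equals $d$, one has $M=\bigsqcup_d M_d$ with $\pi|_{M_d}\colon M_d\to\Omega_d$ a $d$-sheeted covering. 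On each sheet the area formula gives
$$
\mathcal{H}^n(M_d\cap B_r^{n+k})=\int_{\Omega_d}\#\bigl(\pi|_{M_d}^{-1}(y)\cap B_r^{n+k}\bigr)\,J(y)\,d\mathcal{H}^n(y),
$$
where $J(y)=\sqrt{\det(I+G(y)^\top G(y))}\ge 1$ is the Jacobian factor coming from writing the sheet as a graph of a definable $C^1$ map $G$ over $\Omega_d$. The key analytic input is that, because $M$ is definable and $n$-dimensional, the tangent planes $T_xM$ have limits along every ray going to infinity, so $G(y)\to G_\infty(y/\|y\|)$ as $\|y\|\to\infty$ in a controlled (definable) way; hence $J(y)\to J_\infty(y/\|y\|)$, and a dominated-convergence / monotonicity argument along dilations $r\to\infty$ shows that $\mathcal{H}^n(M_d\cap B_r^{n+k})/r^n$ converges, with the limit expressed as an integral of $J_\infty$ over a piece of $C(M_d,\infty)\cap B_1^{n+k}$. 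Summing over $d$ and over the finitely many submanifolds $M_i$ yields that $\theta^n(X)$ exists.

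The main obstacle I anticipate is the uniform control of the Jacobian factor $J(y)$ as $\|y\|\to\infty$: one needs that the gradient of the defining graph map stabilizes along rays uniformly enough that the rescaled integrals converge, rather than merely pointwise. This is where definability is essential — it forces the relevant definable function $y\mapsto J(y)$ to have a well-defined "value at infinity" in each definable cone direction and to approach it monotonically (or at least with controlled oscillation) after the substitution $y\mapsto ry$, so that Claim \ref{propdensitycone}'s curve-selection and monotone-limit machinery can be applied slice by slice. Once this uniformity is in hand, the passage to the limit is routine, and the identification of the limit with $\mathcal{H}^n(C(M_d,\infty)\cap B_1^{n+k})$-type quantities (weighted by the covering degree) is exactly what feeds into the multiplicity bookkeeping of Theorem \ref{thm:densitymult}.
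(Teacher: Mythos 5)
Your overall strategy --- reduce to graphs over open definable subsets of $\R^n$ and invoke Claim \ref{propdensitycone} --- is the right one and is in the spirit of the paper's proof, but the key analytic step has a genuine gap. You fix \emph{one} generic linear projection $\pi\colon\R^{n+k}\to\R^n$ for each submanifold $M_i$ and then try to pass to the limit in $\frac{1}{r^n}\int J(y)\,d\mathcal{H}^n(y)$ by arguing that $J(y)\to J_\infty(y/\|y\|)$ along rays. This fails: a generic finite projection does not control the gradient of the graphing map $G$, so $J=\sqrt{\det(I+G^\top G)}$ can be unbounded and $J(ry)$ can tend to $+\infty$ as $r\to+\infty$. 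Already for the parabola $\{y=x^2\}\subset\R^2$ projected to the $x$-axis one has $J(x)=\sqrt{1+4x^2}\to+\infty$ along every ray, because the tangent lines converge to the vertical direction, which is degenerate for $\pi$; so there is no finite $J_\infty$ and no dominated-convergence argument, even though the density at infinity does exist there. Note also that you cannot fall back on ``$r\mapsto\mathcal{H}^n(X\cap B_r^{n+k})/r^n$ is definable, hence has a limit by the monotonicity theorem'': volumes in definable families need not be definable in the same o-minimal structure, which is precisely why the claim requires an argument.

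The missing idea is to let the reference planes depend on the piece \emph{and} on an auxiliary parameter $\epsilon$. Following Stasica and Kurdyka--Parusi\'nski, for each $\epsilon>0$ one decomposes $X$ (up to a definable set of dimension $<n$) into finitely many disjoint definable pieces $\Gamma_i^{\epsilon}$, each the graph of a map $\varphi_i^{\epsilon}\colon U_i^{\epsilon}\to (E_i^{\epsilon})^{\perp}$ over an open definable $M$-LNE subset $U_i^{\epsilon}$ of an $n$-plane $E_i^{\epsilon}$ \emph{adapted to the piece}, with derivative bounded by $\epsilon$, hence $M\epsilon$-Lipschitz. Each $\Gamma_i^{\epsilon}$ is then $(1+M\epsilon)$-bi-Lipschitz to $U_i^{\epsilon}$, Claim \ref{propdensitycone} gives $\theta^n(U_i^{\epsilon})$, and summing traps both $\liminf$ and $\limsup$ of $\mathcal{H}^n(X\cap B_r^{n+k})/(\mu_n r^n)$ between $\lambda(\epsilon)/(1+M\epsilon)^n$ and $(1+M\epsilon)^n\lambda(\epsilon)$, where $\lambda(\epsilon)=\sum_i\theta^n(U_i^{\epsilon})$; letting $\epsilon\to 0$ forces them to coincide. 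This adaptive, $\epsilon$-flat decomposition is exactly what replaces the uniform Jacobian control that a single projection cannot provide.
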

\begin{proof}[Proof of the Claim \ref{propositiondensityexists}]
Since the tangent mapping $\nu\colon \mathcal{M}:=X\setminus {\rm Sing}_1(X)\to G_n(\mathbb{R}^{n+k})$, given by $\nu(x)=T_x\mathcal{M}$, is a definable mapping (see \cite[Lemma 1.14]{KurdykaP:2006}),  likewise it was done in \cite{Stasica:1982}, we obtain that for each $\epsilon > 0$, there exist $N(\epsilon)$ and disjoint definable sets $\Gamma_1^{\epsilon}, \ldots, \Gamma_{N(\epsilon)}^{\epsilon}$ contained in $X$ such that:
\begin{enumerate}
	\item $\dim X\setminus \cup_{i=1}^{N(\epsilon)} \Gamma_i^{\epsilon} < n$;
	\item Each $\Gamma_i^{\epsilon}$ is the graph of a mapping $\varphi_i^{\epsilon}\colon U_i^{\epsilon}\rightarrow (E_i^{\epsilon})^{\bot}$ with derivative bounded by $\epsilon$ and $U_i^{\epsilon}$ is a definable open set in $E_i^{\epsilon}\in G_n(\mathbb{R}^{n+k})$, where $G_n(\mathbb{R}^{n+k})$ is the Grassmannian of all $n$-dimensional linear subspaces of $\mathbb{R}^{n+k}$.
\end{enumerate}

By refining the above decomposition, we can assume, if necessary that each $U_i^{\epsilon}$ is $M$-LNE, where $M=M(n)$ depends only on $n$ (see, e.g., \cite[Theorem 1.2]{KurdykaP:2006}). Therefore, each $\varphi_i^{\epsilon}$ is $M\epsilon$-Lipschitz.
Thus, the mapping  $\psi_i^{\epsilon}\colon U_i^{\epsilon}\to \Gamma_i^{\epsilon}$, given by $\psi_i^{\epsilon}(x)=x+\varphi_i^{\epsilon}(x)$ is a definable lipeomorphism such that
$$
\|x-y\|\leq \|\psi_i^{\epsilon}(x)-\psi_i^{\epsilon}(y)\|\leq (1+M\epsilon)\|x-y\|\quad \forall x,y\in U_i^{\epsilon}.
$$

By Claim \ref{propdensitycone}, it follows that each $U_i^{\epsilon}$ has a density at infinity and 
$$
\theta^n(U_i^{\epsilon})=\lim\limits_{r\rightarrow +\infty}{\frac{\mathcal{H}^n(U_i^{\epsilon}\cap B_r^{n})}{\mu_nr^n}}=\frac{\mathcal{H}^n(B_1^{n}\cap C(U_i^{\epsilon},\infty))}{\mu_n}=\theta^n(C(U_i^{\epsilon},\infty)).$$

Thus,
$$
\frac{\theta^n(U_i^{\epsilon})}{(1+M\epsilon)^n}\leq \liminf_{r\rightarrow +\infty}{\frac{\mathcal{H}^n(\Gamma_i^{\epsilon}\cap B_r^n)}{\mu_nr^n}}\leq \limsup_{r\rightarrow +\infty}{\frac{\mathcal{H}^n(\Gamma_i^{\epsilon}\cap B_r^n)}{\mu_nr^n}}\leq (1+M\epsilon)^n\theta^n(U_i^{\epsilon}).
$$

By setting $\lambda(\epsilon)=\sum_{i=1}^{N(\epsilon)}\theta^n(U_i^{\epsilon})$ and since for every $r > 0$, we have
$$\mathcal{H}^{n}(X\cap B_r^{n+k})=\sum_{i=1}^{N(\epsilon)}{\mathcal{H}^{n}(\Gamma_i^{\epsilon}\cap B_r^{n+k})},$$
then we obtain
$$\frac{\lambda(\epsilon)}{(1+M\epsilon)^n}\leq \liminf_{r\rightarrow +\infty}{\frac{\mathcal{H}^n(X\cap B_r^n)}{\mu_nr^n}}\leq \limsup_{r\rightarrow +\infty}{\frac{\mathcal{H}^n(X\cap B_r^n)}{\mu_nr^n}}\leq (1+M\epsilon')^n\lambda(\epsilon'),$$
for all $\epsilon, \epsilon'>0$.

Note that $\lambda(\epsilon)\leq (1+M\epsilon)^n2^n\lambda(1)$. Then
$$
\lim_{\epsilon\rightarrow 0}{(1+M\epsilon)^n\lambda(\epsilon)-\frac{\lambda(\epsilon)}{(1+M\epsilon)^n}}=0.$$

Therefore $\liminf_{r\rightarrow +\infty}{\frac{\mathcal{H}^n(X\cap B_r^n)}{\mu_nr^n}}= \limsup_{r\rightarrow +\infty}{\frac{\mathcal{H}^n(X\cap B_r^n)}{\mu_nr^n}}$, and thus $\theta^n(X)$ is defined.

\end{proof}
In this case, it is easy to see that
$$
\theta^n(X)=\lim \limits _{r\to +\infty }\frac{\mathcal{H}^n(X\cap B_r^{n+k}(x))}{\mu_n r^n}$$
for all $x\in \R^{n+k}$.

Finally, we are going to prove the second part, more precisely,
$$\theta^n(X)=\sum\limits _{j=1}^mk_{X,\infty}(C_j)\cdot \mathcal{H}^{n}(C_j),$$
where $C_1,...,C_m$ are the connected  components of $C_{{\rm Smp}}(X,\infty)$.

It is worth noting that the above formula is not presented in \cite{NguyenS:2023}.

Fixed $\epsilon>0$ and $i\in \{1,...,N(\epsilon)\}$, let $U=U_i^{\epsilon}$, $\Gamma=\Gamma_i^{\epsilon}$, $\varphi=\varphi_i^{\epsilon}$ and $E=E_i^{\epsilon}$. Therefore, it follows from Theorems \ref{inv_cones} and \ref{multiplicities} that

	\begin{enumerate}
		\item $C(\Gamma,\infty)$ is the graph of a definable $M\epsilon$-Lipschitz mapping $d_{\infty}\varphi\colon C(U,\infty)\to E^{\perp}$;
		\item $k_{\Gamma,\infty}(x)=1$ for all $x\in C_{\rm Smp}(\Gamma,\infty)$.
	\end{enumerate}

Then

$$\frac{1}{(1+M\epsilon)^{n}}\theta^n(U_i^{\epsilon})\leq \theta^n\left(\Gamma_{i}^{\epsilon}\right)\leq (1+M\epsilon)^n\theta^n(U_i^{\epsilon})$$
 and
 
$$\frac{1}{(1+M\epsilon)^{n}}\theta^n(C(U_i^{\epsilon},\infty))\leq \theta^n\left(C(\Gamma_{i}^{\epsilon},\infty)\right)\leq (1+M\epsilon)^n\theta^n(C(U_i^{\epsilon},\infty)).$$

Again, according to Claim \ref{propdensitycone}, we have
$$\theta^n(C_{\rm Smp}(U_i^{\epsilon},\infty))=\theta^n(C(U_i^{\epsilon},\infty))=\theta^n\left(U_i^{\epsilon}\right).$$
Therefore, 
\begin{equation}\label{eqdensity}
\frac{1}{(1+M\epsilon)^{2n}}\theta^n(C(\Gamma_i^{\epsilon},\infty))\leq \theta^n\left(\Gamma_{i}^{\epsilon}\right) \leq  (1+M\epsilon)^{2n}\theta^n(C(\Gamma_i^{\epsilon},\infty)).
\end{equation}

So, let $\mathcal{A}$ be a stratification of $C_{\rm Smp}(X,\infty)$ compatible with $\mathbb{S}^{{n+k}-1}$, $C_1,\cdots, C_m, C_{\rm Smp}(\Gamma_1^{\epsilon},\infty),\cdots, C_{\rm Smp}(\Gamma_{N(\epsilon)}^{\epsilon},\infty)$.

For all $1\leq i\leq N(\epsilon)$, $k_{\Gamma_{i}^{\epsilon},\infty}(x)=1$  for all $x\in C_{\rm Smp}(\Gamma_i^{\epsilon},\infty)$. Therefore, if $T\in \mathcal{A}$ is a stratum of dimension $n-1$ contained in $C_j\cap \mathbb{S}^{{n+k}-1}$, it follows from the definition of $k_{X,\infty}(C_j)$ that it is the number of pieces $\Gamma_{i}^{\epsilon}$ such that $T$ is contained in $C_{\rm Smp}(\Gamma_{i}^{\epsilon})$. Then
$$
	\sum_{i=1}^{N(\epsilon)}{\mathcal{H}^{n}(C_{\rm Smp}(\Gamma_i^{\epsilon})\cap B^{{n+k}})} = \sum_{j=1}^{m}{k_{X,\infty}(C_j)}\mathcal{H}^{n-1}(C_j\cap B^{{n+k}})
$$
and thus
\begin{eqnarray}\label{eqmulrelative}
	\sum_{i=1}^{N(\epsilon)}{\theta^{n}(C_{\rm Smp}(\Gamma_i^{\epsilon}))} & = & \sum_{j=1}^{m}{k_{X,\infty}(C_j)}\theta^{n}(C_j).
\end{eqnarray}
Therefore, from Eq. \eqref{eqdensity}, \eqref{eqmulrelative} and 
$$\theta^n(X)=\sum_{i=1}^{N(\epsilon)}{\theta^n(\Gamma_{i}^{\epsilon})},$$
we obtain 
$$
\theta^n(X)=\sum\limits _{j=1}^mk_{X,\infty}(C_j)\cdot \theta^n(C_j)=\sum\limits _{j=1}^mk_{X,\infty}(C_j)\cdot \frac{\mathcal{H}^{n}(C_j\cap B^{{n+k}})}{\mu_n}.
$$
\end{proof}

\subsection{Some examples}

\begin{example}
	Let $X=\left\{(x,y)\in \mathbb{R}^2;y-x^2=0\right\}$. Then, $\theta^1(X)=1$. 
\end{example}

\begin{example}
	Let $C=\{(x,y,z)\in \mathbb{R}^3; x^2+y^2=\cosh^2(z)\}$ 
	be the catenoid surface. Then, $\theta^2(C)=2$.
\end{example}
\begin{figure}[!htb]
	\centering
	\includegraphics[scale=0.2]{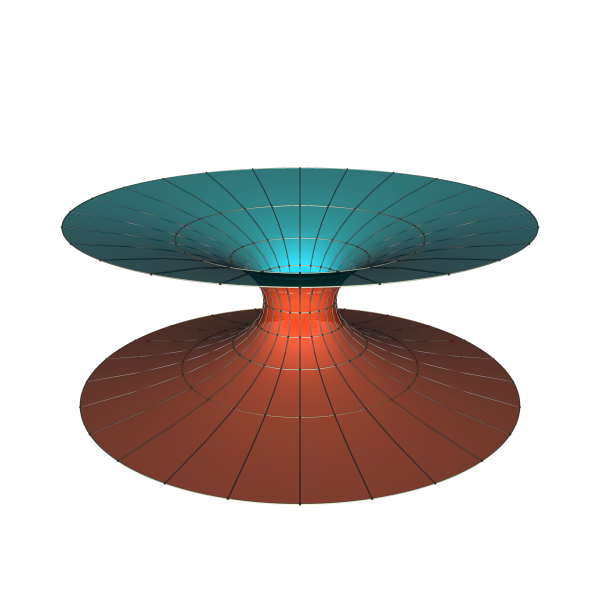}
	\caption{The catenoid minimal surface (see \cite{site_minimal})}
	%	\label{toroflat}
\end{figure}

% \begin{example}
% 	Let 
% 	$$S=\{(x,y,z)\in \mathbb{R}^3; z=\log\left(\frac{\cos(x)}{\cos(y)}\right),  -\frac{\pi}{2}<x,y<\frac{\pi}{2}\}$$
% 	 be the Scherk surface. Then $\theta^2(S)=2$.
% \end{example}
% 	\begin{figure}[!htb]
% 	\centering
% 	\includegraphics[scale=0.2]{Images/Scherk}
% 	\caption{The singly periodic Scherk surface  (see \cite{site_minimal})}
% 	%	\label{toroflat}
% \end{figure}

\begin{example}
	Let $X=\{(x,y,z)\in \mathbb{R}^3; z^2=\alpha(x^2+y^2)\}$, where $\alpha> 0$.
	Then, $\theta^2(X)=2(1+\alpha)^{-\frac{1}{2}}$ (see \cite[Example 2.5]{Kurdyka:1989}). 
\end{example}

If we remove the hypothesis that $X$ is a definable set, then density can be infinite or cannot even exist.

\begin{example}
Let  $H=\left\{(x,y,z)\in \mathbb{R}^3;z=\tan^{-1}\left(\frac{y}{x}\right)\right\}$
be the helicoid surface. Then, $\theta^2(H)=+\infty$.
\end{example}
	\begin{figure}[!htb]
	\centering
	\includegraphics[scale=0.2]{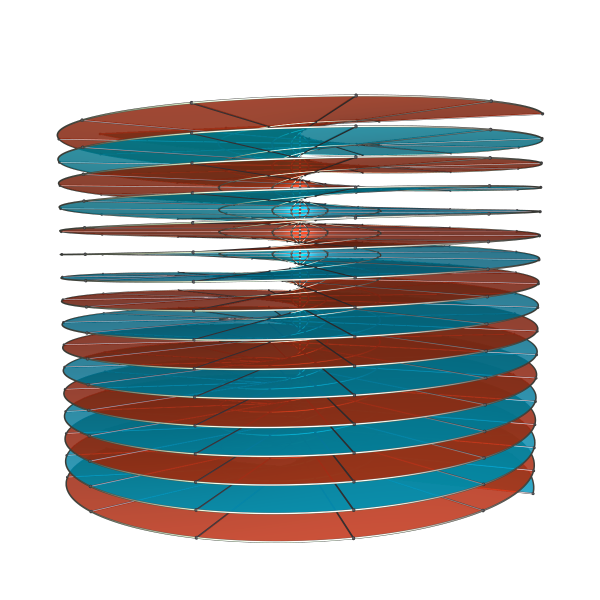}
	\caption {The helicoid minimal surface (see \cite{site_minimal})}
	%	\label{toroflat}
\end{figure}

\begin{example}
Let $I_1 = (0, a_1)$ be an open interval where $a_1 > 0$. Through recurrence, we define the following sets: $A_1:=I_1\times \{0\}$, $A_2:=I_2\times \left(\{-\frac{1}{2}\}\cup \{\frac{1}{2}\}\right)$, where $I_2=(a_1,a_1+2\mathcal{H}^1(I_1))$, $A_3:=I_3\times \{0\}$, where $I_3=(a_2,a_2+2\mathcal{H}^1(I_2))$ and in several cases 
	
	$$A_{2j+1}=I_{2j}\times \{0\},$$
	and 
	$$A_{2j}=I_{2j-1}\times \left(\left\{-\frac{1}{2}\right\}\cup \left\{\frac{1}{2}\right\}\right),$$
where and  $I_j=(a_{j-1},a_{j-1}+2\mathcal{H}^1(I_{j-1})),$ and $\mathcal{H}^1(I_{k})$ is the length of the interval $I_k$.
Thus we define 	
	$$X=\bigcup_{j\in \mathbb{N}}{A_j}.$$
	Then, the following limit does not exist 
	$$	 
	\lim_{r\rightarrow +\infty}\frac{\mathcal{H}^1(X\cap B_r^{2})}{\mu_1 r}.
	$$
\end{example}

\subsection{First consequences}

\subsubsection{Degree of an algebraic set is its density at infinity}
\begin{corollary}\label{cor:density_degree}
 Let $X\subset \C^{n+k}$ be a pure-dimensional complex algebraic set with $\dim_{\C}X=n$. Then, $\theta^{2n}(X)={\rm deg}(X).$
\end{corollary}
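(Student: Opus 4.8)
The plan is to apply the Kurdyka-Raby formula at infinity (Theorem \ref{thm:densitymult}) to the real $2n$-dimensional definable set underlying $X$ and then identify the resulting combinatorial sum with the degree. First I would note that any complex algebraic set is semialgebraic, hence definable in any o-minimal structure, so Theorem \ref{thm:densitymult} applies: $\theta^{2n}(X)$ is defined and equals $\sum_{j=1}^m k_{X,\infty}(C_j)\frac{\mathcal{H}^{2n}(C_j\cap B_1^{2(n+k)})}{\mu_{2n}}$, where the $C_j$ are the connected components of $C_{{\rm Smp}}(X,\infty)$. The key geometric input is that the geometric tangent cone $C(X,\infty)$ of a pure-dimensional complex algebraic set at infinity is itself a complex algebraic cone of the same dimension (this follows, e.g., by passing to the projective closure $\overline{X}\subset\mathbb{P}^{n+k}$ and intersecting with the hyperplane at infinity, whose affine cone is exactly $C(X,\infty)$, or via Gröbner/initial-ideal degeneration); moreover $C(X,\infty)$ is complex analytic, so it is smooth off a complex-codimension-one subset, and each relative multiplicity $k_{X,\infty}(C_j)$ can be computed as a local intersection number.

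Next I would relate the right-hand sum to the classical degree. The degree $\deg(X)$ of a pure $n$-dimensional projective (or affine) complex variety equals the number of intersection points, counted with multiplicity, of $X$ with a generic affine-linear subspace $L$ of complementary dimension $k$; equivalently, choosing a generic linear projection $\pi\colon\mathbb{C}^{n+k}\to\mathbb{C}^n$, the map $\pi|_X$ is a branched cover of degree $\deg(X)$, and this generic sheet number is exactly $\sum_j k_{X,\infty}(C_j)$ weighted appropriately — indeed the relative multiplicities $k_{X,\infty}(C_j)$ were defined precisely as numbers of connected components of the real blow-up, which over $\mathbb{C}$ count sheets of the cone $C(X,\infty)\to\mathbb{C}^n$. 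Since the density of a complex algebraic cone $C$ of complex dimension $n$ equals its degree (a complex $d$-sheeted cone over a generic $\mathbb{C}^n$-direction has $\mathcal{H}^{2n}(C\cap B_1)/\mu_{2n}=d$, by Wirtinger's inequality / the fact that complex subvarieties are area-minimizing and Lelong's theorem), I would get $\sum_j k_{X,\infty}(C_j)\theta^{2n}(C_j)=\theta^{2n}(C(X,\infty))=\deg(C(X,\infty))=\deg(X)$, the last equality because projectivizing $X$ and its tangent cone at infinity preserves degree.

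A cleaner route, which I would actually prefer to write up, is to invoke the classical result (Thie, Draper, or Stoll) that for a pure-dimensional complex analytic set the Lelong number equals the algebraic multiplicity, applied at the point at infinity after the inversion $\iota(x)=x/\|x\|^2$ used in the Remark following Theorem \ref{multiplicities}: under $\iota$, $C(X,\infty)$ becomes $C(Z,0)$ for $Z=\iota(X\setminus\{0\})$, densities at infinity of $X$ match densities at $0$ of $Z$, and the Lelong number of a complex algebraic set at $0$ on its projective-cone-at-infinity side is $\deg(X)$. Then $\theta^{2n}(X)=\theta^{2n}(C(X,\infty))=\theta^{2n}(C(Z,0))=\deg(X)$.

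The main obstacle I anticipate is the bookkeeping identifying the real-analytic relative multiplicities $k_{X,\infty}(C_j)$ and the real Hausdorff-measure densities $\theta^{2n}(C_j)$ with the complex-algebraic degree: one must check that the decomposition of $C_{{\rm Smp}}(X,\infty)$ into components $C_j$ refines compatibly with a generic linear projection, that no component contributes a fractional or spurious term, and that Wirtinger's computation for complex cones gives integer densities summing correctly. Handling the hyperplane-at-infinity degeneration carefully (ensuring $\dim C(X,\infty)=n$ and that $\deg C(X,\infty)=\deg X$, rather than a smaller number, which requires $X$ to have no components "at infinity" — automatic since $X$ is pure-dimensional and affine) is the other delicate point, but it is standard projective algebraic geometry.
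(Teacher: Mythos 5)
Your main line of argument (the first two paragraphs) is essentially the paper's proof: apply Theorem \ref{thm:densitymult}, use that the density of a complex algebraic cone at the vertex equals its degree (Thie/Draper/Lelong, the paper cites \cite[Theorem 7.3]{Draper:1969}), and combine with the decomposition $\deg(X)=\sum_j k_{X,\infty}(X_j)\deg(X_j)$ over the components of the tangent cone at infinity, which the paper takes from \cite[\S 2.3]{FernandezFS:2018} and which your generic-projection sheet count correctly motivates. However, two of the equalities you actually write down are false as stated, and they happen to cancel. For $P=\{(z,w)\in\C^2:\ z=w^2\}$ (an example appearing in the paper), one has $\deg(P)=2$ and $\theta^2(P)=2$, but $C(P,\infty)$ is the single line $\{w=0\}$ with $k_{P,\infty}=2$, so $\theta^2(C(P,\infty))=\deg(C(P,\infty))=1$. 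Hence neither $\sum_j k_{X,\infty}(C_j)\,\theta^{2n}(C_j)=\theta^{2n}(C(X,\infty))$ nor $\deg(C(X,\infty))=\deg(X)$ holds in general; the correct chain must keep the weights $k_{X,\infty}(C_j)$ throughout, i.e.\ $\theta^{2n}(X)=\sum_j k_{X,\infty}(C_j)\theta^{2n}(C_j)=\sum_j k_{X,\infty}(X_j)\deg(X_j)=\deg(X)$, where the last equality is the cycle-theoretic statement (degree of the hyperplane-at-infinity section counted with multiplicities), not an equality of reduced degrees.

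The ``cleaner route'' you say you would prefer to write up is the genuinely problematic part and should be discarded. It rests on $\theta^{2n}(X)=\theta^{2n}(C(X,\infty))$, which the parabola refutes, and on applying Thie/Draper to $Z=\iota(X\setminus\{0\})$; but the real inversion $\iota(x)=x/\|x\|^2$ is not holomorphic on $\C^{n+k}$ for $n+k\geq 2$, so $Z$ is not a complex analytic set and the Lelong-number-equals-multiplicity theorem does not apply to it. The inversion only transports the blow-spherical data (relative multiplicities and the tangent cone as a set), which is exactly why the weighted sum, and not the bare density of the cone, is what survives. Keep the first route, fix the two intermediate equalities, and either cite the degree formula for the tangent cone at infinity or flesh out the sheet-counting argument into an actual proof of it.
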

\begin{proof}
Note that when $X$ is a complex algebraic set, there is a complex analytic set $\sigma$ with $\dim \sigma <\dim X$, such that for each irreducible component $X_j$ of the tangent cone $C(X,\infty)$, $X_j\setminus \sigma$ intersect only one connected component $C_i$ of $C_{\rm Smp}(X,\infty)$, then we define $k_{X,\infty}(X_j):=k_{X,\infty}(C_i)$. Thus, from \cite[\S 2.3]{FernandezFS:2018}, we obtain
\begin{equation*}\label{kurdyka-raby}
{\rm deg}(X)=\sum\limits_{j=0}^r k_{X,\infty }(X_j)\cdot{\rm deg}(X_j),
\end{equation*}
where $X_1,...,X_r$ are all the irreducible components of $C(X,\infty)$. Since each $X_j$ is a complex algebraic cone, we have that ${\rm deg}(X_j)=m(X_j,0)$, where $m(X_j,0)$ denotes the multiplicity of $X_j$ at $0$ (see the definitions of multiplicity and degree in \cite{Chirka:1989}). 
By \cite[Theorem 7.3]{Draper:1969}, $\theta^{2n}(X_j,0)=m(X_j,0)$ for each $j\in\{1,...,r\}$. Since each $X_j$ is a cone with vertex at $0$, we have that $\theta^{2n}(X_j,0)=\theta^{2n}(X_j)$. Thus,
$$
{\rm deg}(X)=\sum\limits_{j=0}^rk_{X,\infty }(X_j)\cdot \theta^{2n}(X_j).
$$
By Theorem \ref{thm:densitymult}, we have that 
$$
\theta^{2n}(X)=\sum\limits_{j=0}^r k_{X,\infty }(X_j)\cdot \theta^{2n}(X_j).
$$
Therefore $\theta^{2n}(X)={\rm deg}(X)$.
\end{proof}

\subsubsection{The o-minimal Chow's theorem}
Let us remember the Stoll-Bishop's Theorem:
\begin{theorem}[Stoll-Bishop's Theorem (see \cite{Stoll:1964a,Stoll:1964b,Bishop:1964})]\label{Stoll-Bishop}
Let $Z\subset\C^m$ be a pure-dimensional entire complex analytic subset with $\dim_{\C}X=n$. Then $Z$ is a complex algebraic set if and only if there exists a constant $R>0$ such that 
$$
\frac{\mathcal{H}^{2n}(Z\cap \overline{B}_r^{2m}(0))}{r^{2n}}\leq R, \quad \mbox{ for all } r>0.
$$
\end{theorem}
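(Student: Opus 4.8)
The plan is to prove the two implications separately; the forward one is short. Suppose first that $Z$ is algebraic of degree $d$, and fix a point $p\in Z$. By Lelong's monotonicity formula for complex analytic sets, the ratio $r\mapsto \mathcal H^{2n}\big(Z\cap \overline B_r^{2m}(p)\big)/(\mu_{2n}r^{2n})$ is nondecreasing, and its limit as $r\to+\infty$ equals $\deg Z=d$ (this is classical, and also follows from Corollary~\ref{cor:density_degree}). Hence $\mathcal H^{2n}(Z\cap \overline B_r^{2m}(p))\le d\,\mu_{2n}r^{2n}$ for every $r>0$, and therefore $\mathcal H^{2n}(Z\cap \overline B_r^{2m}(0))\le d\,\mu_{2n}(r+\|p\|)^{2n}\le R\,r^{2n}$ for a suitable constant $R$, the estimate for small $r$ being trivial.

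For the converse, assume $\mathcal H^{2n}(Z\cap\overline B_r^{2m}(0))\le R\,r^{2n}$ for all $r>0$. I would pass to the projective closure: view $\C^m\hookrightarrow\mathbb{CP}^m$, let $\overline Z$ be the topological closure of $Z$ in $\mathbb{CP}^m$, and let $H_\infty=\mathbb{CP}^m\setminus\C^m$ be the hyperplane at infinity, so that $\overline Z\cap\C^m=Z$ and $\overline Z\setminus Z\subset H_\infty$. The decisive step is that order-$2n$ polynomial volume growth is precisely the borderline condition forcing $\overline Z$ to have \emph{locally finite} $2n$-dimensional Hausdorff measure near every point of $H_\infty$: decomposing $\C^m$ into dyadic shells $\{2^j\le\|z\|<2^{j+1}\}$ and comparing, inside an affine chart of $\mathbb{CP}^m$ centred at a point of $H_\infty$, the Euclidean and Fubini--Study $2n$-volume elements along the tangent $n$-planes of $Z$, one shows that the Fubini--Study $\mathcal H^{2n}$-mass of $\overline Z$ coming from the $j$-th shell is controlled by a summable sequence times $R$. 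Granting this, Bishop's extension theorem --- a pure-dimensional complex analytic subset of $U\setminus E$, with $E\subset U$ a complex analytic subvariety, that has locally finite $\mathcal H^{2n}$-measure near $E$ extends to a complex analytic subset of $U$ --- applied with $U=\mathbb{CP}^m$ and $E=H_\infty$ shows that $\overline Z$ is a complex analytic subset of $\mathbb{CP}^m$; Chow's theorem then makes $\overline Z$ projective algebraic, whence $Z=\overline Z\cap\C^m$ is an affine algebraic set.

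A more hands-on variant, which I would keep in reserve, bypasses $\mathbb{CP}^m$: the growth bound forces $C(Z,\infty)$ to be an $n$-dimensional complex algebraic cone, so a generic complex-linear projection $\pi\colon\C^m\to\C^n$ satisfies $\ker\pi\cap C(Z,\infty)=\{0\}$; this makes $\pi|_Z\colon Z\to\C^n$ proper with finite fibres --- i.e.\ a branched covering of some degree $d$ --- and yields a linear bound $\|z\|\le C\|\pi(z)\|$ on $Z$ outside a compact set. Writing $\C^m=\C^n_w\times\C^{m-n}_\zeta$, over the complement of the (analytic, of codimension $\ge 1$) discriminant the covering splits into branches $\zeta=f^{(1)}(w),\dots,f^{(d)}(w)$ whose coordinatewise elementary symmetric functions $a_{\ell,k}$ are single-valued and locally bounded, hence extend to entire functions on $\C^n$; the linear bound forces $|a_{\ell,k}(w)|\le A(1+\|w\|)^{d}$, so by the Liouville theorem for entire functions of polynomial growth each $a_{\ell,k}$ is a polynomial. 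This exhibits $Z$ as a union of irreducible components of the algebraic set $\{\zeta_\ell^{d}+\sum_k a_{\ell,k}(w)\,\zeta_\ell^{d-k}=0,\ \ell=1,\dots,m-n\}$, so $Z$ is algebraic.

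The main obstacle is the same for both routes and is the genuine heart of the matter: converting the quantitative bound $\mathcal H^{2n}(Z\cap \overline B_r^{2m}(0))\le Rr^{2n}$ into the correct tameness of $Z$ at infinity --- finite Fubini--Study volume of $\overline Z$ near $H_\infty$ in the first route, algebraicity of $C(Z,\infty)$ together with the linear estimate $\|z\|\lesssim\|\pi(z)\|$ in the second. Both rest on the monotonicity of the normalized volume for complex analytic sets and on the phenomenon that, in contrast with the merely smooth category, such a set of bounded volume growth cannot spiral near infinity (contrast the helicoid of the examples above, which has infinite density). Once these inputs are in hand, the remaining ingredients --- Chow's theorem, the Riemann and Bishop extension theorems, and Liouville's theorem --- are classical.
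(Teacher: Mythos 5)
The paper does not prove this statement: it is imported as a classical theorem with references to Stoll and Bishop, so there is no in-paper argument to compare against. Your forward implication is complete, and the architecture of your first route for the converse (finite Fubini--Study mass of the closure near the hyperplane at infinity, then Bishop's extension theorem in $\mathbb{CP}^m$, then Chow) is indeed the classical proof.

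The problem is that the step you yourself single out as decisive is exactly where the argument as written fails, and the mechanism you propose does not close. The pointwise comparison of the Euclidean and Fubini--Study volume elements on an arbitrary complex $n$-plane $T$ through $z$ only gives $\omega_{FS}^n|_T\le C(1+\|z\|^2)^{-n}\,\omega_{\mathrm{eucl}}^n|_T$: the eigenvalues of $\omega_{FS}$ are $(1+\|z\|^2)^{-1}$ in the directions orthogonal to $z$ and $(1+\|z\|^2)^{-2}$ only in the radial direction, and a worst-case tangent plane need not contain the radial direction. With this bound the contribution of the shell $\{2^j\le\|z\|<2^{j+1}\}$ is at most $C\,2^{-2nj}\bigl(\sigma(2^{j+1})-\sigma(2^j)\bigr)$, where $\sigma(r)=\mathcal{H}^{2n}(Z\cap \overline{B}_r^{2m}(0))$. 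Under $\sigma(r)\le Rr^{2n}$ these terms are bounded but \emph{not} summable: by Lelong's monotonicity $\sigma(r)/(\mu_{2n}r^{2n})$ increases to some $L\ge 1$, and then $2^{-2nj}\bigl(\sigma(2^{j+1})-\sigma(2^j)\bigr)\to \mu_{2n}(4^n-1)L>0$. So the dyadic decomposition, as described, proves nothing. The finiteness of the projective mass really comes from the Lelong--Jensen formula, which shows that $\int_{Z\cap(B_R\setminus B_r)}\bigl(\tfrac{i}{2\pi}\partial\bar\partial\log\|z\|^2\bigr)^n=\nu(R)-\nu(r)$ telescopes to $\lim\nu-\nu(r)<\infty$, together with a comparison of $\omega_{FS}^n$ with this degenerate form plus integrable corrections; this is the actual content of Stoll's papers, and it exploits that on average the tangent planes of $Z$ become radial at infinity --- precisely the favorable eigenvalue. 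Your second route has the same gap in a different guise: the algebraicity of $C(Z,\infty)$, the properness of a generic projection, and the linear estimate $\|z\|\lesssim\|\pi(z)\|$ outside a compact set are not soft consequences of the volume bound; they encode the same quantitative tameness at infinity. Once either of these inputs is granted, the remaining steps (Bishop plus Chow, or Liouville applied to the elementary symmetric functions of the branches) are correct and standard.
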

Thus, we obtain the o-minimal Chow's Theorem proved in \cite[Corollary 4.5]{PeterzilS:2009}:
\begin{corollary}[O-minimal Chow's theorem]\label{o-minimal_chow_thm}
 Let $X\subset \C^{n+k}$ be a pure-dimensional entire complex analytic set with $\dim_{\C}X=n$. Then, $X$ is a complex algebraic set if and only if $X$ is definable in an o-minimal structure on $\R$.
\end{corollary}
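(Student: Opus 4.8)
The plan is to reduce the o-minimal Chow theorem to the Stoll--Bishop characterization (Theorem \ref{Stoll-Bishop}) via the boundedness of the density function, using the Kurdyka--Raby formula at infinity (Theorem \ref{thm:densitymult}). First I would handle the easy direction: if $X\subset\C^{n+k}$ is a complex algebraic set, then $X$ is a semialgebraic set (being defined by the vanishing of polynomials, whose real and imaginary parts are real polynomials), hence definable in any o-minimal structure on $\R$, since every o-minimal structure by axiom (1) contains all algebraic subsets and, by the axioms, all semialgebraic sets. This requires no monotonicity input at all.

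For the converse, assume $X$ is a pure-dimensional entire complex analytic set with $\dim_\C X=n$ that is definable in $\mathcal{S}$. Viewing $X\subset\R^{2(n+k)}$ as a real definable set of real dimension $2n$, Theorem \ref{thm:densitymult} applies and tells us that
$$
\theta^{2n}(X)=\lim_{r\to+\infty}\frac{\mathcal{H}^{2n}(X\cap B_r^{2(n+k)}(0))}{\mu_{2n}r^{2n}}
$$
exists and is finite, with value $\sum_{j}k_{X,\infty}(C_j)\frac{\mathcal{H}^{2n}(C_j\cap B_1^{2(n+k)})}{\mu_{2n}}$. Next I would use the monotonicity of $r\mapsto \theta^{2n}(X,0,r)$: since $X$ is a complex analytic set, it satisfies the monotonicity formula at $0$ (this is Remark \ref{rem:examples_monotonicity}, via the classical Wirtinger/Lelong monotonicity for complex analytic sets), so $\theta^{2n}(X,0,r)$ is nondecreasing in $r$. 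Combined with the existence of the finite limit $\theta^{2n}(X)$ as $r\to+\infty$, monotonicity forces $\theta^{2n}(X,0,r)\le \theta^{2n}(X)$ for all $r>0$, i.e.
$$
\frac{\mathcal{H}^{2n}(X\cap \overline{B}_r^{2(n+k)}(0))}{r^{2n}}\le \mu_{2n}\,\theta^{2n}(X)=:R<+\infty\qquad\text{for all }r>0.
$$
This is exactly the growth bound in the hypothesis of Theorem \ref{Stoll-Bishop}, so $X$ is a complex algebraic set, completing the equivalence.

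The main obstacle — really the only nontrivial point, since the algebraic-case direction is routine — is making sure the two ingredients are legitimately available: that a complex analytic set is a real definable set of real dimension $2n$ so that Theorem \ref{thm:densitymult} can be invoked, and that the monotonicity of $\theta^{2n}(X,0,\cdot)$ for complex analytic sets is genuinely in hand (it is, by the monotonicity formula as recorded in Remark \ref{rem:examples_monotonicity}). One small care point is matching the open-ball versus closed-ball normalizations between Theorem \ref{thm:densitymult} and Theorem \ref{Stoll-Bishop}; since $\mathcal{H}^{2n}$ of the boundary sphere contributes zero (the sphere being $(2n+2k-1)$-dimensional with $2n< 2n+2k-1$ when $k\ge1$, and trivially when $k=0$), the open- and closed-ball volumes agree, so the estimate transfers without loss. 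With these in place, the proof is the two displayed inequalities above together with Stoll--Bishop.
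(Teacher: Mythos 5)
Your proposal is correct and follows essentially the same route as the paper: invoke Theorem \ref{thm:densitymult} to get that $\theta^{2n}(X)$ exists and is finite, deduce the uniform bound $\mathcal{H}^{2n}(X\cap \overline{B}_r^{2(n+k)}(0))/r^{2n}\leq R$ for all $r>0$, and conclude by Stoll--Bishop. Your explicit use of the monotonicity formula (Remark \ref{rem:examples_monotonicity}) to pass from the finite limit at infinity to the bound for all $r$ is a welcome justification of a step the paper's proof leaves implicit.
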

\begin{proof}
 By Theorem \ref{thm:densitymult}, $\theta^{2n}(X)$ is finite. Then there exists a constant $R>0$ such that 
$$
\frac{\mathcal{H}^{2n}(X\cap \overline{B}_r^{2(n+k)}(0))}{r^{2n}}\leq R, \quad \mbox{ for all } r>0.
$$

Therefore, by Stoll-Bishop's Theorem, $X$ is a complex algebraic set.
\end{proof}

\subsubsection{The Lipschitz o-minimal Chow's theorem}
\begin{corollary}[Lipschitz o-minimal Chow's theorem]\label{lip_o-minimal_chow_thm}
 Let $X\subset \C^{n+k}$ be a pure-dimensional entire complex analytic set with $\dim_{\C}X=n$. Then, $X$ is a complex algebraic set if and only if $X$ is lipeomorphic at infinity to a definable set in an o-minimal structure on $\R$.
\end{corollary}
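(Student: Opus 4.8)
The plan is to prove the two implications of the equivalence separately, the nontrivial direction being that lipeomorphy at infinity to a definable set forces $X$ to be algebraic. For the easy direction, if $X$ is a complex algebraic subset of $\C^{n+k}\cong\R^{2(n+k)}$, then it is a real algebraic subset of $\R^{2(n+k)}$, hence definable by Axiom~1) of Definition~\ref{definivel}; and the identity map trivially exhibits $X$ as lipeomorphic at infinity to a definable set (namely itself).

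For the converse, suppose there are compact sets $K\subset\R^{2(n+k)}$ and $\tilde K\subset\R^N$, a definable set $Y\subset\R^N$, and a lipeomorphism $\varphi\colon X\setminus K\to Y\setminus\tilde K$. First I would enlarge $\tilde K$ to a closed Euclidean ball, which is semialgebraic and hence definable, so that $Y_0:=Y\setminus\tilde K$ is a definable set. Since $X$ is a pure $2n$-real-dimensional closed and unbounded subset of $\R^{2(n+k)}$ (being an entire complex analytic set of complex dimension $n$) and Hausdorff dimension is preserved under lipeomorphisms, $Y_0$ is an unbounded definable set with $\dim Y_0=\dim_H(X\setminus K)=2n$. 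By the Kurdyka--Raby formula at infinity (Theorem~\ref{thm:densitymult}), $\theta^{2n}(Y_0)$ is defined and finite; hence there is $r_1>0$ with $\mathcal{H}^{2n}(Y_0\cap B_s^N(0))\le c_0\,s^{2n}$ for all $s\ge r_1$, where $c_0:=(\theta^{2n}(Y_0)+1)\mu_{2n}$.

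Next I would transfer this growth bound to $X$. Let $C_1,C_2>0$ be Lipschitz constants for $\varphi$ and $\varphi^{-1}$, fix $p\in X\setminus K$, and put $q:=\varphi(p)$. For $r>0$, every $x\in(X\cap B_r^{2(n+k)}(0))\setminus K$ satisfies $\|\varphi(x)-q\|\le C_1(r+\|p\|)$, so $\varphi((X\cap B_r^{2(n+k)}(0))\setminus K)\subset Y_0\cap B_{C_1r+C_1\|p\|+\|q\|}^N(0)$; applying the $C_2$-Lipschitz map $\varphi^{-1}$, and using that a $C$-Lipschitz map multiplies $\mathcal{H}^{2n}$ by at most $C^{2n}$, one gets $\mathcal{H}^{2n}((X\cap B_r^{2(n+k)}(0))\setminus K)\le C_2^{2n}\,\mathcal{H}^{2n}(Y_0\cap B_{C_1r+C_1\|p\|+\|q\|}^N(0))$, which for $r$ large is $\le c_1 r^{2n}$ for a suitable constant $c_1$ by the estimate of the previous paragraph. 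Since a complex analytic set of pure complex dimension $n$ has locally finite $\mathcal{H}^{2n}$-measure, $\mathcal{H}^{2n}(X\cap K)<\infty$, so $\mathcal{H}^{2n}(X\cap\overline B_r^{2(n+k)}(0))\le c_2 r^{2n}$ for all $r$ large. For small $r$ the ratio $\mathcal{H}^{2n}(X\cap\overline B_r^{2(n+k)}(0))/r^{2n}$ is controlled by the monotonicity of $r\mapsto\theta^{2n}(X,0,r)$ (valid since an entire complex analytic set satisfies the monotonicity formula, cf.\ Remark~\ref{rem:examples_monotonicity}), and on any fixed compact range of radii it is bounded by local finiteness of the Hausdorff measure; altogether this produces a constant $R>0$ with $\mathcal{H}^{2n}(X\cap\overline B_r^{2(n+k)}(0))\le R\,r^{2n}$ for all $r>0$. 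By the Stoll--Bishop theorem (Theorem~\ref{Stoll-Bishop}), $X$ is a complex algebraic set, which finishes the proof.

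Most of the real work has already been absorbed into Theorem~\ref{thm:densitymult} and the cited Stoll--Bishop theorem, so no step is genuinely deep; the point demanding the most care is the transfer step in the third paragraph, where one must upgrade the asymptotic finiteness of the density of $Y_0$ to a \emph{uniform-in-$r$} volume bound for $X$ — which forces a separate treatment of small radii via the monotonicity formula at $0$ — while keeping track of the fact that $\varphi$ is defined only off a compact set, so that the contribution $\mathcal{H}^{2n}(X\cap K)$ must be absorbed into the constant.
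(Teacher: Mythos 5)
Your proposal is correct and follows essentially the same route as the paper: both use Theorem \ref{thm:densitymult} to get finiteness of the density at infinity of the definable model, transfer the resulting volume growth bound $\mathcal{H}^{2n}\lesssim r^{2n}$ to $X$ via the Lipschitz constants of the lipeomorphism (absorbing $\mathcal{H}^{2n}(X\cap K)$ and the small-radius contribution into the constant), and conclude by the Stoll--Bishop theorem. The only cosmetic differences are the direction in which you orient the lipeomorphism and your use of the monotonicity formula in place of the paper's appeal to analyticity at $0$ for small radii.
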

\begin{proof}
We closely follow the proof of Theorem 3.1 in \cite{Sampaio:2023}.

We assume that $0\in X$ and that $X$ is lipeomorphic at infinity to a definable set in an o-minimal structure on $\R$. Let $A\subset \R^m$ be a definable set such that there exist compact subsets $K\subset \C^{n+k}$ and $\tilde K\subset \R^m$ and a lipeomorphism $\varphi\colon A\setminus \tilde K\to X\setminus  K$. Let $\lambda\geq 1$ such that
$$
\frac{1}{\lambda}\|x-y\|\leq \|\varphi(x)-\varphi(y)\|\leq \lambda \|x-y\|, \quad  \forall x,y\in A\setminus \tilde K.
$$
Fix $x_0\in A\setminus \tilde K$ and set $y_0=\varphi(x_0)$. Let $\tilde r_0=\|x_0\|$ and $r_0=\|y_0\|$. 
Thus, for any $r>0$, we obtain that 
$$
(X\setminus K)\cap B_{r}^{2(n+k)}(0)\subset \varphi ((A\setminus \tilde K)\cap B_{\lambda(r+r_0)+\tilde r_0}^{m}(0))
$$
and 
\begin{eqnarray*}
\mathcal{H}^{2n}(X\cap B_{r}^{2(n+k)}(0))&\leq &\lambda^{2n}\mathcal{H}^{2n}(A\cap B_{\lambda(r+r_0)+\tilde r_0}^{m}(0))+ \mathcal{H}^{2n}(X\cap K).
\end{eqnarray*}

Since $A$ is a definable set, by Theorem \ref{thm:densitymult}, $\theta^{2n}(A)$ is finite. Then there exists a constant $C>0$ such that 
$$
\frac{\mathcal{H}^{2n}(A\cap \overline{B}_r^{m}(0))}{r^{2n}}\leq C, \quad \mbox{ for all } r>0.
$$

But $X$ is analytic at $0$, then there exist $r_1,R_1>0$ such that 
$$
\frac{\mathcal{H}^{2n}(X\cap B_{r}^{2(n+k)}(0))}{r^{2n}} \leq R_1,
$$
for all $r\leq r_1$. Moreover, $\mathcal{H}^{2n}(X\cap K)<+\infty$. Then, there exists $R_2>0$ such that
$$
\frac{\mathcal{H}^{2n}(X\cap B_{r}^{2(n+k)}(0))}{r^{2n}}\leq \frac{\lambda^{2n}C(\lambda(r+r_0)+\tilde r_0)^{2n}+\mathcal{H}^{2n}(X\cap K)}{r^{2n}}\leq R_2,
$$
for all $r\geq r_1$.

Therefore, by Stoll-Bishop's Theorem, $X$ is a complex algebraic set.
\end{proof}

\subsubsection{O-minimal Yau's Bernstein Problem}
\begin{corollary}\label{o-minimal_yau_conj}
 Let $X\subset \R^{n+1}$ be an oriented stable complete minimal hypersurface with $n\leq 6$. If $X$ is a definable set then $X$ is a hyperplane.
\end{corollary}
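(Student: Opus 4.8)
The plan is to reduce the o-minimal Yau's Bernstein Problem to the main theorem (Theorem \ref{main_thm}) via a classical result of Schoen--Simon--Yau / do Carmo--Peng type on stable minimal hypersurfaces. First I would recall the known fact that an oriented stable complete minimal hypersurface $X\subset\R^{n+1}$ with $n\leq 6$ has at most Euclidean volume growth; more precisely, stability together with the Simons-type inequality yields that $X$ satisfies a monotonicity-type control forcing the density at infinity $\theta^n(X)$ to be finite. Since $X$ is minimal, Remark \ref{rem:examples_monotonicity} already gives that $X$ satisfies the monotonicity formula at every point $p\in X$, so $\theta^n(X,p,\cdot)$ is well-defined and nondecreasing; combined with definability of $X$ and Theorem \ref{thm:densitymult}, the density at infinity $\theta^n(X)$ is automatically defined and finite.

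The heart of the argument is to show $\theta^n(X)=1$, so that equivalence (5)$\Leftrightarrow$(1) in Theorem \ref{main_thm} applies and forces $X$ to be an affine hyperplane. To do this I would invoke the curvature estimate for stable minimal hypersurfaces in dimensions $n\leq 6$ (this is exactly the dimension range of Simons and the Schoen--Simon--Yau estimates): a complete oriented stable minimal hypersurface in $\R^{n+1}$ with $n\leq 6$ must in fact already be a hyperplane by the Bernstein-type theorem for stable hypersurfaces — but since we are not allowed to assume that result (it is precisely Yau's problem), the point is that stability plus minimality plus definability is enough to pin down the tangent cone at infinity. Concretely, the tangent cone at infinity $C(X,\infty)$ of a stable minimal hypersurface is itself a stable minimal cone in $\R^{n+1}$ of dimension $n\leq 6$, and by Simons' theorem (Theorem \ref{thm:bernstein_thm}, via the nonexistence of singular stable minimal cones in these dimensions) such a cone must be a hyperplane. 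Hence $C(X,\infty)$ is a linear subspace. Moreover $X$, being minimal, is $C^1$-smooth (indeed real-analytic) everywhere, so ${\rm Sing}_1(X)=\emptyset$ and $X$ is Lipschitz regular at infinity if its tangent cone at infinity is a hyperplane and the monotonicity quantity degenerates; but more directly, with $C(X,\infty)$ a hyperplane and $X$ definable and satisfying the monotonicity formula, condition (3) of Theorem \ref{main_thm} is met once we check Lipschitz regularity at infinity.

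Thus the clean route is: (a) $X$ minimal $\Rightarrow$ $X$ satisfies the monotonicity formula at every point (Remark \ref{rem:examples_monotonicity}); (b) $X$ definable $\Rightarrow$ $\theta^n(X)$ defined and finite (Theorem \ref{thm:densitymult}); (c) $X$ stable, minimal, $n\leq 6$ $\Rightarrow$ $C(X,\infty)$ is a multiplicity-one hyperplane, by the classification of stable minimal hypercones in low dimensions (Simons, $J$. Simons' theorem on the nonexistence of stable singular cones of dimension $\leq 6$, which underlies Theorem \ref{thm:bernstein_thm}); this also gives $\theta^n(X)=1$ by Theorem \ref{thm:densitymult} since $k_{X,\infty}$ equals $1$ on the single stratum; (d) apply the implication (5)$\Rightarrow$(1) of Theorem \ref{main_thm} to conclude $X$ is an affine hyperplane.

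The main obstacle I anticipate is step (c): justifying that the tangent cone at infinity of a stable minimal hypersurface is again stable and minimal, and invoking the right form of Simons' nonexistence theorem. The standard argument uses that blow-downs of stable minimal hypersurfaces (obtained by the monotonicity formula as Radon-measure limits of rescalings) are stable minimal cones, and then the Simons classification rules out anything but a hyperplane for $n\leq 6$; one must be careful that definability makes the tangent cone at infinity a genuine set (Remark \ref{remarksimplepointcone}) coinciding with the geometric blow-down, and that multiplicity is one. Once that is in hand, finiteness and the value $\theta^n(X)=1$ follow from the Kurdyka--Raby formula, and the rest is an immediate appeal to Theorem \ref{main_thm}. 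Alternatively, if one prefers to avoid the blow-down regularity theory, one can note that since $X$ is a definable minimal hypersurface it is real-analytic, its singular set at infinity is lower-dimensional, and the stable, minimal, definable cone $C(X,\infty)$ of dimension $\leq 6$ is smooth away from the origin; smoothness plus stability plus minimality of the link forces the link to be a totally geodesic sphere, hence $C(X,\infty)$ is a hyperplane.
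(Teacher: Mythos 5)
Your first step coincides with the paper's: definability plus Theorem \ref{thm:densitymult} gives that $\theta^n(X)$ exists and is finite, i.e.\ $X$ has Euclidean volume growth. After that the paper simply cites \cite[Corollary, p.~104]{ShenZ:1998} (building on Schoen--Simon and Schoen--Simon--Yau), which states precisely that a complete oriented stable minimal hypersurface in $\R^{n+1}$ with $n\leq 6$ and Euclidean volume growth is a hyperplane; no appeal to Theorem \ref{main_thm} is needed at all.

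Your step (c) is a genuine gap. The assertion that $C(X,\infty)$ is a \emph{multiplicity-one hyperplane} is essentially the entire analytic content of the statement, and you do not establish it: you would need (i) convergence of the rescalings $\frac{1}{r}X$ to a stationary integral varifold cone (available once the density is finite), (ii) passage of stability to the limit and regularity of the limit cone away from the origin (this is the Schoen--Simon compactness/regularity theory for stable hypersurfaces, not merely Simons' classification of \emph{smooth} minimal hypercones, since the blow-down could a priori have singularities that must be excluded by dimension reduction), and (iii) an argument ruling out multiplicity $\geq 2$, which you assert but never address; a higher-multiplicity planar blow-down would give $\theta^n(X)=k_{X,\infty}\cdot 1>1$ in the Kurdyka--Raby formula and your conclusion would fail. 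You flag (c) as "the main obstacle" but then only describe what would need to be done rather than doing it, so as written the proposal assumes the hard part of the theorem. If you do not want to reconstruct that theory, the efficient route is the paper's: deduce finite density at infinity from Theorem \ref{thm:densitymult} and invoke the Shen--Zhu corollary directly. (Your steps (a), (b) and (d) are fine, and if $\theta^n(X)=1$ were established, the appeal to (5)$\Rightarrow$(1) of Theorem \ref{main_thm} would indeed close the argument; the monotonicity formula at a point holds by Remark \ref{rem:examples_monotonicity}.)
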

\begin{proof}
By Theorem \ref{thm:densitymult}, we have
$$
\lim\limits_{r\to +\infty}\frac{\mathcal{H}^{n}(X\cap B_{r}^{n+1}(0))}{r^{n}}<+\infty.
$$
Thus, by \cite[Corollary, p. 104]{ShenZ:1998} (see also \cite{ShoenS:1981} and \cite{ShoenSY:1975}), $X$ is a hyperplane.
\end{proof}

\section{Parametric versions of the Bernstein Theorem}\label{sec:parametric_version}

\begin{theorem}\label{main_thm}
Let $X\subset \mathbb R^{n+k}$ be a closed and connected set with $n=\dim_H X$. Assume that $X$  satisfies the monotonicity formula at some $p\in X$. Then, the following statements are equivalent.
\begin{itemize}
 \item [(1)] $X$ is an affine linear subspace;
 \item [(2)] $X$ is a definable set, blow-spherical regular at infinity and $C(X,\infty)$ is a linear subspace;
 \item [(3)] $X$ is a definable set that is Lipschitz regular at infinity and $C(X,\infty)$ is a linear subspace;
 \item [(4)] $X$ is an LNE at infinity definable set and $C(X,\infty)$ is a linear subspace;
 \item [(5)] $\theta^n(X)=1$.
\end{itemize}
\end{theorem}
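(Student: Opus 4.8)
The plan is to establish $(1)\Rightarrow(2),(3),(4),(5)$ together with $(5)\Rightarrow(1)$, $(2)\Rightarrow(5)$, $(3)\Rightarrow(5)$ and $(4)\Rightarrow(5)$; this closes every loop. The four implications out of $(1)$ are immediate: if $X=p+V$ is an affine linear subspace, then $X$ is semialgebraic, hence definable; ${\rm Sing}_1(X)=\emptyset$ and $X$ is isometric to $\R^n$ after an affine change of coordinates, so $X$ is Lipschitz regular, LNE and blow-spherical regular at infinity (take $K=\emptyset$); its geometric tangent cone at infinity is the linear subspace $V$; and $\mathcal H^n(X\cap B_r^{n+k}(0))=\mu_n(r^2-{\rm dist}(0,X)^2)^{n/2}$ for all large $r$, so $\theta^n(X)=1$.

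The heart of the theorem is $(5)\Rightarrow(1)$, and it is the only place where the monotonicity hypothesis is used. Assume $\theta^n(X)=1$. Because $X$ satisfies the monotonicity formula at $p$, the function $r\mapsto\theta^n(X,p,r)$ is nondecreasing with values in $[1,+\infty)$. Comparing $B_r^{n+k}(p)$ with $B_{r-\|p\|}^{n+k}(0)$ and $B_{r+\|p\|}^{n+k}(0)$ and using $\big((r\pm\|p\|)/r\big)^n\to1$, one obtains $\lim_{r\to+\infty}\theta^n(X,p,r)=\theta^n(X)=1$; a nondecreasing function bounded below by $1$ with limit $1$ at $+\infty$ is identically $1$, so $\theta^n(X,p,\cdot)\equiv1$. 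By item (i) of the monotonicity formula, $\theta^n(X,p)=1$ forces $X$ to be $C^1$-smooth at $p$, and by item (ii), $X$ is a cone with vertex $p$. It then remains to see that a cone with vertex $p$ that is a $C^1$-submanifold near $p$ is the affine subspace $p+T$, $T:=T_pX-p$: writing $Y:=X-p$ near $0$ as the graph of a $C^1$ map $\phi\colon U\to T^{\perp}$ with $U\subset T$ an open neighbourhood of $0$, $\phi(0)=0$ and $D\phi(0)=0$, the relation $ty\in Y$ for $y\in Y$ and small $t>0$ gives $t\,P_{T^{\perp}}(y)=\phi(t\,P_T(y))$; dividing by $t$ and letting $t\to0^+$ yields $P_{T^{\perp}}(y)=0$, so $Y\subset T$, hence $\phi\equiv0$ and $Y$ coincides near $0$ with an open neighbourhood of $0$ in $T$; the cone property then forces $Y=T$, i.e. $X=p+T$. (This step uses neither definability nor connectedness.)

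For $(2)\Rightarrow(5)$, $(3)\Rightarrow(5)$ and $(4)\Rightarrow(5)$ the main tool is the Kurdyka--Raby formula at infinity (Theorem \ref{thm:densitymult}): writing $C_1,\dots,C_m$ for the connected components of $C_{\rm Smp}(X,\infty)$, one has $\theta^n(X)=\sum_{j=1}^m k_{X,\infty}(C_j)\,\mathcal H^n(C_j\cap B_1^{n+k})/\mu_n$. In all three cases $C(X,\infty)$ is an $n$-dimensional linear subspace $L$, so by Remark \ref{remarkdense} the set $C_{\rm Smp}(X,\infty)$ is open and dense in $L$ with complement of dimension $<n$, whence $\sum_j\mathcal H^n(C_j\cap B_1^{n+k})=\mathcal H^n(L\cap B_1^{n+k})=\mu_n$; therefore $\theta^n(X)\ge1$ with equality if and only if $k_{X,\infty}(C_j)=1$ for every $j$. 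So it suffices to prove $k_{X,\infty}\equiv1$ on $C_{\rm Smp}(X,\infty)$. In case $(2)$ this follows from the blow-spherical invariance of the relative multiplicities (Proposition \ref{propinvarianciamultiplicidaderelativa}) applied to a blow-spherical homeomorphism at infinity between $X$ and $\R^n$, since $k_{\R^n,\infty}\equiv1$; in case $(3)$ it follows identically from Theorem \ref{multiplicities} applied to a lipeomorphism at infinity between $X$ and $\R^n$. In case $(4)$ there is no comparison map, and instead one adapts the contradiction argument from the proof of Theorem \ref{multiplicities}: if some $k_{X,\infty}(C_j)\ge2$, pick a linear projection $\pi\colon\R^{n+k}\to\R^n$ with $\ker\pi\cap L=\{0\}$ and a simple direction $v\in C_j$; then there are two distinct connected components $V_1,V_2$ of $\pi^{-1}(C_{\eta,R}(\pi(v)))\cap X$ and sequences $x_j\in V_1$, $y_j\in V_2$ with $\pi(x_j)=\pi(y_j)=t_j\,\pi(v)$ and $t_j\to+\infty$; since $\pi|_L$ is an isomorphism, $x_j/t_j\to v$ and $y_j/t_j\to v$, so $\|x_j-y_j\|=o(t_j)$, while any path in $X$ from $x_j$ to $y_j$ must leave $V_1$ and hence has length at least ${\rm dist}(x_j,\partial V_1)\ge c\,t_j$ for a fixed $c>0$; thus $d_X(x_j,y_j)\ge c\,t_j$, contradicting the LNE-at-infinity inequality $d_X(x_j,y_j)\le C\|x_j-y_j\|=o(t_j)$.

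The main obstacle is $(5)\Rightarrow(1)$: upgrading the soft hypothesis $\theta^n(X)=1$ into the rigidity that $X$ is a cone which is smooth at its vertex — precisely what the monotonicity formula is engineered to deliver — and then carrying out the elementary but slightly fussy argument that such a cone is an affine subspace. The other genuinely technical point is the $(4)\Rightarrow(5)$ estimate, where one must quantify how a lower bound on the inner distance between two sheets of $X$ lying over the same tangent direction defeats Lipschitz normal embedding at infinity.
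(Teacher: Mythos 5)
Your proposal is correct and follows essentially the same route as the paper: the implications $(2),(3),(4)\Rightarrow(5)$ via the Kurdyka--Raby formula at infinity after showing all relative multiplicities equal $1$ (by blow-spherical/bi-Lipschitz invariance in cases $(2)$ and $(3)$, and by the inner-distance contradiction argument in case $(4)$), and $(5)\Rightarrow(1)$ by forcing $\theta^n(X,p,\cdot)\equiv 1$ from monotonicity. Your write-up actually fills in two details the paper leaves implicit (identifying $\lim_{r\to+\infty}\theta^n(X,p,r)$ with $\theta^n(X)$, and verifying that a cone that is $C^1$-smooth at its vertex is an affine subspace), but the overall argument is the same.
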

\begin{proof}
It is obvious that item $(1)$ implies the items $(2)$, $(3)$, $(4)$ and $(5)$.

\bigskip

\noindent $(2)\Longrightarrow (5)$. Assume that $X$ is a definable set, blow-spherical regular at infinity and that $C(X,\infty)$ is a linear subspace.

Since $X$ is blow-spherical regular at infinity, by Proposition 2.14 in \cite{SampaioS:2023}, we have $k_{X,\infty}(C)=1$ for all connected component $C$ of $C_{{\rm Smp}}(X,\infty)$. It follows from the hypothesis that $C(X,\infty)$ is a hyperplane and Theorem \ref{thm:densitymult} that $\theta^n(X)=1$.

\bigskip

\noindent $(3)\Longrightarrow (5)$. Assume that $X$ is a definable set that is Lipschitz regular at infinity and $C(X,\infty)$ is a linear subspace.

By Theorem \ref{multiplicities}, we obtain that $k_{X,\infty}(v)=1$, for all $v\in C_{\rm Smp}(X,\infty)$. 
Therefore, as $C(X,\infty)$ is a linear subspace, it follows from Claim \ref{claimrelmul}, Remark \ref{remarkdense}, and Theorem \ref{thm:densitymult} that $\theta^{n}(X)=\mathcal{H}^{n}(C_1)=1$.

\bigskip

\noindent $(4)\Longrightarrow (5)$. Assume that $X$ is a definable set that is LNE at infinity and $C(X,\infty)$ is a linear subspace.
	\begin{claim}\label{claimrelmul}
		If $X\subset \R^m$ is a definable set and LNE at infinity, then $k_{X,\infty}(v)=1$ for all $v\in C_{\rm Smp}(X,{\infty})$.
	\end{claim}
	\begin{proof}[Proof of Claim \ref{claimrelmul}]
Indeed, we suppose there exists  $x=(x',0)\in Smp(\partial X_{\infty }')=C(X,\infty)\cap \mathbb{S}^{m-1}\times \{0\}$ such that $k_{X,\infty}(x')\geq 2$, we know that $k_{X,\infty}(X_j)=k_{X,\infty}(x')$ is the number of connected components of the germ $(\rho_{\infty }^{-1}(X),x)$, where $X_j$ is a connected component of $C_{\rm Smp}(X,{\infty})$ and $x'\in X_j$. Thus, for a sequence of positive real numbers $\{t_j\}_{j\in \mathbb{N}}$ such that $\lim t_j=+\infty$,  $k_{X,\infty}(x')$ can be seen as the number of connected components of the set 
$(SX\cap \mathbb{S}^{m-1}\times \{t_j^{-1}\})\cap B_{\delta}^{m+1}(x)$ for all large $k$ and a small enough $\delta>0$, where 
 $$
 SX=\{(x,s)\in \mathbb{S}^{m-1}\times (0,+\infty);\,\textstyle\frac{1}{s}\cdot x \in X \mbox{ and }\frac{1}{s}\in S\}
 $$
and $S=\{t_j^{-1}\}_{j\in \mathbb{N}}$. 
 
Let $X_1$ and $X_2$ be two connected components of $X_{\infty}'\cap B_{\delta}^{m+1}(x),$ and for $\eta, R>0$, we define the following set $C_{\eta,R}(x')=\{v\in \mathbb{R}^n\setminus \{0\}; \exists t>0;\|v-tx'\|\leq \eta t\}\setminus B_R^n$.
For each $j\in \mathbb{N}$, we can take $x_j\in \rho_{\infty }(X_1)$ and $y_j\in \rho_{\infty }(X_2)$ such that $\|x_j\|=\|y_j\|=t_j$ and $\lim_{j\rightarrow +\infty}{\frac{x_j}{t_j}}=\lim_{j\rightarrow +\infty}{\frac{y_j}{t_j}}=x'$. Now, taking subsequence, if necessary, we assume that $x_j,y_j\in C_{\frac{\eta}{2},R}$. Thus, let $\beta_j\colon [0,1]\rightarrow X$ be a curve connecting $x_j$ to $y_j$, we have that there exists a $t_0\in [0,1]$ such that $\beta_j(t_0)\notin C_{\eta,R}$, since $x_j$ and $y_j$ belong to different connected components of $X\cap C_{\eta,R}$. Therefore, $lenght(\beta_j)\geq \eta t_j$ and then $d_X(x_j,y_j)\geq \frac{\eta}{2} t_j$. 

On the other hand, $X$ is a Lipschitz normally embedded at infinity set, then there exists a ball $B_r^n$ and a constant $C > 0$ such that $d_{X\setminus B_r^n}(v, w)\leq  C\|v-w\|$, for all $v, w\in X\setminus B_r(0)$. Therefore, $C\|\frac{x_j}{t_j}-\frac{y_j}{t_j}\|\geq \eta$, for all $j\in \mathbb{N}$ and this is a contradiction, since $\lim{\frac{x_j}{t_j}}= \lim{\frac{y_j}{t_j}}$.

Therefore $k_{X,\infty}(v)=1$ for all $v\in C_{\rm Smp}(X,{\infty})$.
\end{proof}

Therefore, as $C(X,\infty)$ is a linear subspace, it follows from Claim \ref{claimrelmul}, Remark \ref{remarkdense}, and Theorem \ref{thm:densitymult} that $\theta^{n}(X)=\mathcal{H}^{n}(C_1)=1$.

\bigskip

\noindent $(5)\Longrightarrow (1)$. Suppose that $\theta^n(X)=1$. 

Since $X$ satisfies the monotonicity formula at $p$, $\theta^n(X,p,\cdot)$ is a nondecreasing function such that $1\leq \theta^n(X,p,r)\leq \theta^n(X)=1$ for all $r>0$. Then $\theta^n(X,p,r)=1$ for all $r>0$, and by using again that $X$ satisfies the monotonicity formula at $p$, we have that $X$ is a cone with vertex at $p$ and $X$ is $C^1$-smooth at $p$. Therefore $X$ is an affine linear subspace.
\end{proof}

\subsection{Some examples}
\begin{example}\label{example:catenoid}
 The catenoid is the surface $C=\{(x,y,z)\in\R^3; x^2+y^2=\cosh^2(z)\}$.  The Catenoid is a complete embedded minimal hypersurface and its geometric tangent cone at infinity is the plane $z=0$. Moreover, it is definable in the o-minimal structure $\R_{exp}$ (see \cite{Dries:1984}).
\end{example}
\begin{example}
Let $L\subset \mathbb{S}^2$ be a curve lipeomorphic to $\mathbb{S}^1$ with $2\pi<\mathcal{H}^1(L)< +\infty$. 
 Thus, the surface $X=Cone_0(L):=\{tx;x\in L$ and $t\geq 0\}$ is lipeomorphic to $\R^2$ and, in particular, it is Lipschitz regular at infinity. Since $X$ is a cone, then $\theta^2(X,0,r)=\mathcal{H}^1(L)/(2\pi)>1$ for all $r>0$. Therefore $X$ satisfies the monotonicity formula at $0$. However, it is not a plane. 
\end{example}
\begin{example}
 The surface $P=\{(z,w)\in\C^2; z=w^2\}$ is a smooth complex algebraic curve and, in particular, it is an area-minimizing set. Its geometric tangent cone at infinity is the complex line $w=0$. However, $P$ is not a plane. 
\end{example}

\begin{example}
We cannot remove the hypothesis that $X$ satisfies the monotonicity formula at some $p$. Indeed, $X=\{(x,y,z)\in\R^3; z^3=x^{2}+y^{2}+1\}$ is lipeomorphic to $\R^2$ and, in particular, it is Lipschitz regular at infinity. In fact, $X$ is the graph of the Lipschitz function $f\colon \R^2\to \R$ given by $f(x,y)=(x^2+y^2+1)^{\frac{1}{3}}$. Moreover, $X$ is a smooth surface, $C(X,\infty)$ is the plane $z=0$ and $\theta^2(X)=1$. However, $X$ is not a plane. 
\end{example}

\subsection{Some direct consequences}

It is a direct consequence of Theorem \ref{thm:densitymult} and Claim \ref{claimrelmul} the following result:
		
\begin{corollary}\label{cor:lne_density}
Let $X\subset \R^{n+k}$ be an $n$-dimensional definable set. If $X$ is LNE at infinity, then $\theta^n(X)=\theta^n(C(X,\infty))$.
\end{corollary}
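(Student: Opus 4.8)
The plan is to combine the Kurdyka--Raby formula at infinity (Theorem \ref{thm:densitymult}) with the relative-multiplicity computation already carried out for LNE at infinity sets. First I would invoke Claim \ref{claimrelmul}: since $X\subset\R^{n+k}$ is definable and LNE at infinity, $k_{X,\infty}(v)=1$ for every $v\in C_{\rm Smp}(X,\infty)$. In particular, writing $C_1,\dots,C_m$ for the connected components of $C_{\rm Smp}(X,\infty)$, we have $k_{X,\infty}(C_j)=1$ for all $j$.

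Next I would apply Theorem \ref{thm:densitymult} to $X$, which gives
$$
\theta^n(X)=\sum_{j=1}^m k_{X,\infty}(C_j)\frac{\mathcal{H}^n(C_j\cap B_1^{n+k})}{\mu_n}=\sum_{j=1}^m\frac{\mathcal{H}^n(C_j\cap B_1^{n+k})}{\mu_n}.
$$
The point is then that the right-hand side equals $\theta^n(C(X,\infty))$. Indeed, by Remark \ref{remarkdense}, $C_{\rm Smp}(X,\infty)=\bigcup_j C_j$ is a dense open subset of the $n$-dimensional part of $C(X,\infty)$, and since $C(X,\infty)$ is a definable cone of dimension $n$ (it has the same dimension as $X$ by, e.g., the properties recalled before Theorem \ref{inv_cones}), the complement $C(X,\infty)\setminus\bigcup_j C_j$ has Hausdorff dimension at most $n-1$ and hence $\mathcal{H}^n$-measure zero. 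Therefore $\mathcal{H}^n(C(X,\infty)\cap B_1^{n+k})=\sum_{j=1}^m\mathcal{H}^n(C_j\cap B_1^{n+k})$.

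Finally, since $C(X,\infty)$ is a cone with vertex at the origin, $\mathcal{H}^n(C(X,\infty)\cap B_r^{n+k})=r^n\mathcal{H}^n(C(X,\infty)\cap B_1^{n+k})$ for all $r>0$, so the defining limit for $\theta^n(C(X,\infty))$ exists and equals $\mathcal{H}^n(C(X,\infty)\cap B_1^{n+k})/\mu_n$. Combining the displays yields $\theta^n(X)=\theta^n(C(X,\infty))$, as desired. The only mildly delicate point is the measure-theoretic bookkeeping ensuring that the lower-dimensional leftover set $C(X,\infty)\setminus C_{\rm Smp}(X,\infty)$ contributes nothing to $\mathcal{H}^n$; this is routine once one knows $\dim C(X,\infty)=n$ and that the simple points form a dense open subset of the $n$-dimensional part, both of which are recorded earlier in the paper. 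No genuine obstacle arises here — the corollary is essentially a direct reading of Theorem \ref{thm:densitymult} with all multiplicities equal to $1$.
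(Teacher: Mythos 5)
Your proposal is correct and follows exactly the route the paper intends: the paper states this corollary as a direct consequence of Theorem \ref{thm:densitymult} and Claim \ref{claimrelmul}, which is precisely the combination you use. The extra bookkeeping you supply — that $C(X,\infty)\setminus C_{\rm Smp}(X,\infty)$ is $\mathcal{H}^n$-null by Remark \ref{remarkdense} and that the conical structure of $C(X,\infty)$ turns $\mathcal{H}^n(C(X,\infty)\cap B_1^{n+k})/\mu_n$ into its density at infinity — is exactly what is needed to make "direct consequence" rigorous.
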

In particular, since for a pure dimensional complex algebraic set, $X\subset \C^{n+k}$ with $\dim_{\C}X=n$, we have $\theta^{2n}(X)={\rm deg}(X)$ (see Corollary \ref{cor:density_degree}), we obtain the main result of \cite{DiasR:2022}:
\begin{corollary}
Let $X\subset \C^{n+k}$ be pure dimensional complex algebraic set. If $X$ is LNE at infinity, then $\deg(X)=\deg(C(X,\infty))$.
\end{corollary}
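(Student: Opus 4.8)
The plan is to obtain the equality by chaining the two identities just established: the density–degree formula of Corollary \ref{cor:density_degree} and the LNE-at-infinity invariance of the density of Corollary \ref{cor:lne_density}. No new geometric input is needed; the proof is essentially a bookkeeping argument once the hypotheses are matched up correctly.

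First I would note that a complex algebraic set $X\subset\C^{n+k}\cong\R^{2(n+k)}$ is in particular a semialgebraic subset of $\R^{2(n+k)}$, hence definable in the fixed o-minimal structure, and that its real dimension equals $2n$ since $\dim_{\C}X=n$. Consequently Corollary \ref{cor:lne_density}, applied with the integer $2n$ in the role of $n$, yields
$$
\theta^{2n}(X)=\theta^{2n}(C(X,\infty)),
$$
using only that $X$ is a $2n$-dimensional definable set which is LNE at infinity. Next I would invoke the classical fact that the geometric tangent cone at infinity of a pure $n$-dimensional complex algebraic set is again a pure $n$-dimensional complex algebraic set — indeed a complex algebraic cone (obtainable, e.g., by homogenizing a defining ideal, or by passing to the projective closure and taking the hyperplane-at-infinity slice). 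Therefore Corollary \ref{cor:density_degree} applies to both $X$ and $C(X,\infty)$, giving
$$
\theta^{2n}(X)=\deg(X)\qquad\text{and}\qquad \theta^{2n}(C(X,\infty))=\deg(C(X,\infty)).
$$
Combining the three displayed equalities produces $\deg(X)=\deg(C(X,\infty))$, as claimed.

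The only point that deserves to be spelled out — and hence the main, admittedly minor, obstacle — is the verification that $C(X,\infty)$ is a pure $n$-dimensional complex algebraic set, so that its degree is defined and Corollary \ref{cor:density_degree} is genuinely applicable to it as well as to $X$. Once that is recorded, the rest of the argument is purely formal. I would also remark that this recovers the main result of \cite{DiasR:2022} and that the identity can fail without the LNE-at-infinity hypothesis, since in general $\deg(C(X,\infty))\le\deg(X)$ with strict inequality possible.
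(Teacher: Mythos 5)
Your proof is correct and is essentially the paper's own argument: the text derives this corollary exactly by combining Corollary \ref{cor:lne_density} with the identity $\theta^{2n}(\cdot)=\deg(\cdot)$ from Corollary \ref{cor:density_degree}, applied to both $X$ and $C(X,\infty)$. Your added remark that one must check $C(X,\infty)$ is a pure $n$-dimensional complex algebraic cone is a reasonable point the paper leaves implicit, and it is indeed the standard fact needed to apply the degree–density formula to the cone.
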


We also obtain the following:
\begin{corollary}
Let $X\subset \mathbb R^{n+k}$ be a pure-dimensional algebraic set. Suppose that $X$ is LNE at infinity and $C(X,\infty)$ is a linear subspace. If  $X$ is a minimal submanifold or an area-minimizing set, then $X$ is an affine linear subspace.
\end{corollary}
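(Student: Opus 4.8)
The plan is to combine the previous corollary with the chain of equivalences in Theorem \ref{main_thm}. First I would observe that a pure-dimensional algebraic set is in particular a definable set (in the o-minimal structure of semialgebraic sets), so all the hypotheses about definability in Theorem \ref{main_thm} are automatically satisfied. Next, since $X$ is a minimal submanifold or an area-minimizing set, Remark \ref{rem:examples_monotonicity} guarantees that $X$ satisfies the monotonicity formula at every $p\in X$; in particular it does so at some $p\in X$, which is exactly the standing hypothesis needed to invoke Theorem \ref{main_thm}. (If $X$ is not connected one argues component by component, each component being algebraic, minimal/area-minimizing, LNE at infinity, and with linear tangent cone at infinity.)

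The core of the argument is then short. By Corollary \ref{cor:lne_density}, since $X$ is LNE at infinity and $n$-dimensional definable, we have $\theta^n(X)=\theta^n(C(X,\infty))$. By hypothesis $C(X,\infty)$ is a linear subspace of dimension $n$ (its dimension equals $\dim X = n$), hence $\mathcal{H}^n(C(X,\infty)\cap B_1^{n+k})=\mu_n$ and therefore $\theta^n(C(X,\infty))=1$. Combining, $\theta^n(X)=1$. Now I would apply the implication $(5)\Longrightarrow(1)$ of Theorem \ref{main_thm}: using that $X$ satisfies the monotonicity formula at $p$, the density function $\theta^n(X,p,\cdot)$ is nondecreasing and pinned between $1$ and $\theta^n(X)=1$, so it is identically $1$; the monotonicity formula then forces $X$ to be simultaneously a cone with vertex at $p$ and $C^1$-smooth at $p$, whence $X$ is an affine linear subspace.

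I do not expect a genuine obstacle here, since all the heavy lifting is already done in Theorem \ref{main_thm} and Corollary \ref{cor:lne_density}; the only points requiring a line of care are (i) checking that ``pure-dimensional algebraic'' indeed supplies both definability and, via Remark \ref{rem:examples_monotonicity}, the monotonicity formula, and (ii) justifying that a linear tangent cone at infinity has density exactly $1$ — which is immediate once one notes $\dim C(X,\infty)=\dim X=n$, so the linear subspace in question is $n$-dimensional and thus contributes the full ball volume. If one wanted to avoid quoting Corollary \ref{cor:lne_density}, the same conclusion follows directly from Theorem \ref{main_thm}, item $(4)\Longrightarrow(1)$, since $X$ is an LNE at infinity definable set with linear tangent cone at infinity; I would probably phrase the final writeup that way, as it is the most economical route.
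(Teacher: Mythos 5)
Your proposal is correct and follows essentially the same route as the paper: apply Corollary \ref{cor:lne_density} to get $\theta^n(X)=\theta^n(C(X,\infty))=1$ since the tangent cone at infinity is an $n$-dimensional linear subspace, then invoke Theorem \ref{main_thm} (the monotonicity formula being supplied by the minimal/area-minimizing hypothesis) to conclude. The extra details you supply — definability of algebraic sets, the source of the monotonicity formula, and the alternative via $(4)\Rightarrow(1)$ — are all consistent with what the paper leaves implicit.
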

\begin{proof}
By Corollary \ref{cor:lne_density}, $\theta^n(X)=\theta^n(C(X,\infty))$. Since $C(X,\infty)$ is a hyperplane, we have $\theta^n(X)=1$. By Theorem \ref{main_thm}, $X$ is an affine linear subspace.
\end{proof}
We obtain the main results of \cite{FernandesS:2020} (see also \cite{Sampaio:2023}):
\begin{corollary}
Let $X\subset \mathbb C^{n+k}$ be a pure-dimensional complex analytic set with $\dim_{\C}X=n$. Suppose that $X$ is LNE at infinity and $C(X,\infty)$ is a dimensional complex linear subspace with $\dim_{\C}C(X,\infty)=n$. Then $X$ is an affine linear subspace.
\end{corollary}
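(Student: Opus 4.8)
\emph{Strategy.} The plan is to show that, under the stated hypotheses, $X$ is automatically a complex algebraic (hence definable) set, and then to conclude by the implication $(4)\Rightarrow(1)$ of Theorem~\ref{main_thm} applied with $n$ replaced by the real dimension $2n$. Note that, being an entire complex analytic set, $X$ satisfies the monotonicity formula at every point by Remark~\ref{rem:examples_monotonicity}, so that hypothesis of Theorem~\ref{main_thm} comes for free; it remains to upgrade ``complex analytic'' to ``algebraic''.

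\emph{Step 1 (a proper projection onto the tangent cone).} Put $V:=C(X,\infty)$, which by hypothesis is a complex linear subspace with $\dim_{\C}V=n$, and let $P\colon \C^{n+k}\to V$ and $Q:=\mathrm{Id}-P\colon \C^{n+k}\to V^{\perp}$ be the orthogonal projections. First I would verify that $\|Q(x)\|=o_{\infty}(\|x\|)$ for $x\in X$: otherwise there is a sequence $x_j\in X$ with $\|x_j\|\to\infty$ and $\|Q(x_j)\|\geq\delta\|x_j\|$, and any subsequential limit of $x_j/\|x_j\|$ is a unit vector of $C(X,\infty)=V$ with nonzero $Q$-component, which is absurd. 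Hence $\|x\|\leq 2\|P(x)\|$ on $X$ outside a large ball, so $\pi:=P|_{X}\colon X\to V$ is a proper holomorphic map; being proper between $n$-dimensional complex analytic spaces with $V$ irreducible, $\pi$ is surjective with finite fibres, i.e.\ a branched covering of some degree $d\geq 1$.

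\emph{Step 2 ($X$ is algebraic).} Since $X\subset V\oplus V^{\perp}$ is finite over $V$ via $\pi$, for each linear coordinate $w_i$ on $V^{\perp}$, $i=1,\dots,k$, the pseudopolynomial
$$
W_i(v,t):=\prod_{x\in\pi^{-1}(v)}\bigl(t-w_i(Q(x))\bigr)\qquad (v\in V,\ t\in\C),
$$
taken with multiplicities, is monic of degree $d$ in $t$ and has coefficients that are holomorphic on all of $V\cong\C^{n}$ (symmetric functions of the sheets of the finite holomorphic map $\pi$; alternatively, continuous on $V$, holomorphic off the branch locus, hence holomorphic by Riemann's extension theorem). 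The $j$-th coefficient is an elementary symmetric function of the finitely many numbers $w_i(Q(x))$, $x\in\pi^{-1}(v)$, so by Step~1 it has polynomial growth $O(\|v\|^{j})$; being entire, it is a polynomial. Thus each $W_i$ is a polynomial in $(v,t)$, the algebraic set $A:=\{x\in\C^{n+k}: W_i(P(x),w_i(Q(x)))=0,\ i=1,\dots,k\}$ contains $X$ and is finite over $V$, whence $\dim A=n$. As $X$ is closed and pure-dimensional of dimension $n$, it is a union of $n$-dimensional irreducible components of $A$, hence a complex algebraic set; in particular $X$ is definable, e.g.\ by Corollary~\ref{o-minimal_chow_thm}.

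\emph{Step 3 (conclusion) and the main obstacle.} Now $X$ is a definable set which satisfies the monotonicity formula at $p$, is LNE at infinity, and whose tangent cone $C(X,\infty)$ is a linear subspace; so $(4)\Rightarrow(1)$ of Theorem~\ref{main_thm} gives that $X$ is an affine linear subspace. I expect Step~2 to be the delicate part: one must install the branched-covering / pseudopolynomial description of $X$ over $V$ carefully (properness of $\pi$; holomorphy of the symmetric-function coefficients; the growth estimate of Step~1) --- this is essentially the mechanism underlying the Kurdyka-Raby formula and the multiplicity arguments of Theorem~\ref{multiplicities}. Finally, observe that the LNE hypothesis is used only in Step~3; replacing it by Lipschitz regularity at infinity, blow-spherical regularity at infinity, or the condition $\theta^{2n}(X)=1$, and invoking the corresponding implication of Theorem~\ref{main_thm}, yields the same conclusion and also recovers the results of \cite{Sampaio:2023}.
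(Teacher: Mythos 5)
Your proof is correct, and its overall architecture coincides with the paper's: reduce to the case where $X$ is complex algebraic (hence definable and, by Remark \ref{rem:examples_monotonicity}, satisfying the monotonicity formula everywhere), then invoke the implication $(4)\Rightarrow(1)$ of Theorem \ref{main_thm}. The difference is in how the algebraicity is obtained. The paper dispatches it in one line by citing \cite[Theorem 3.1]{Sampaio:2023} (and then, in a slip of notation, attributes the final step to Theorem \ref{thm:densitymult} when Theorem \ref{main_thm} is clearly meant), whereas you give a self-contained proof via the classical mechanism: the estimate $\|Q(x)\|=o_{\infty}(\|x\|)$ forced by $C(X,\infty)=V$, properness and finiteness of the projection $\pi=P|_X$ onto $V$, holomorphy of the symmetric-function coefficients of the pseudopolynomials $W_i$ across the branch locus by Riemann extension, and a Liouville argument turning the $O(\|v\|^{j})$ growth into polynomiality. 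This is a correct and genuinely more elementary route for that key step; what it buys is independence from \cite{Sampaio:2023}, at the cost of length. A still shorter variant within the paper's own toolkit would be to observe that your Step 1 already yields the Stoll--Bishop bound $\mathcal{H}^{2n}(X\cap B_r)=O(r^{2n})$ (each of the $d$ sheets over $V\cap B_{2r}$ is eventually a graph with small derivative, by the argument of Claim \ref{claim:Lipschitz_function}), so Theorem \ref{Stoll-Bishop} gives algebraicity without constructing the $W_i$ explicitly. One cosmetic point you share with the paper: Theorem \ref{main_thm} is stated for \emph{connected} $X$, while the corollary does not hypothesize connectedness; this is harmless (the density computation forces $\theta^{2n}(X)=1$, which rules out more than one component), but worth a word if one is being scrupulous.
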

\begin{proof}
Since $C(X,\infty)$ is a dimensional complex algebraic set with $\dim_{\C}C(X,\infty)=n$, we have that $X$ is a complex algebraic set as well (see, e.g., \cite[Theorem 3.1]{Sampaio:2023}). In particular, $X$ is a definable set and satisfies the monotonicity formula at any $p\in X$. By Theorem \ref{thm:densitymult}, $X$ is an affine linear subspace.
\end{proof}

\begin{corollary}
Let $X\subset \mathbb C^{n+k}$ be a pure-dimensional complex algebraic set with $\dim_{\C}X=n$. Suppose that $X$ is Lipschitz regular at infinity. Then $X$ is an affine linear subspace.
\end{corollary}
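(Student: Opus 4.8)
The plan is to derive the statement from Theorem \ref{main_thm}, via the implication $(3)\Rightarrow(1)$: the hypothesis already hands us two of the three ingredients of condition $(3)$ ($X$ definable, $X$ Lipschitz regular at infinity), so the genuine work is to (i) check that the standing hypotheses of Theorem \ref{main_thm} are satisfied and (ii) prove that the geometric tangent cone $C(X,\infty)$ is a linear subspace.

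For (i): a complex algebraic subset of $\C^{n+k}\cong\R^{2(n+k)}$ is real algebraic, hence semialgebraic, hence definable; it is closed, has Hausdorff dimension $2n$, and, being an entire pure-dimensional complex analytic set, satisfies the monotonicity formula at each of its points by Remark \ref{rem:examples_monotonicity}. Connectedness is automatic: if $n=0$ the statement is trivial, and if $n\ge 1$ then Lipschitz regularity at infinity provides a lipeomorphism $\varphi\colon X\setminus K\to\R^{2n}\setminus\tilde K$ with $K,\tilde K$ compact, the target is connected, and no irreducible component of $X$ is bounded, so $X$ is connected. (Alternatively, one could aim at condition $(4)$ of Theorem \ref{main_thm} after noting that $\R^{2n}\setminus\tilde K$, and hence $X$, is LNE at infinity — removing a large ball reduces $\R^{2n}\setminus\tilde K$ to $\R^{2n}$ minus a ball, which is LNE — but targeting $(3)$ is cleaner.)

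For (ii): apply Theorem \ref{inv_cones} to $\varphi$; since $C(\R^{2n},\infty)=\R^{2n}$, this yields a globally defined lipeomorphism $d\varphi\colon C(X,\infty)\to\R^{2n}$. Thus $C(X,\infty)$ is a complex algebraic cone that is bi-Lipschitz homeomorphic to $\R^{2n}$, in particular a Lipschitz submanifold of dimension $2n$. This is the crux, and the step I expect to be the main obstacle. I would invoke the Lipschitz regularity theorem for complex algebraic sets (a complex algebraic set that is a Lipschitz manifold is smooth) to conclude that $C(X,\infty)$ is a smooth complex algebraic submanifold; a smooth algebraic cone is smooth at its vertex, hence coincides there with its tangent space and, by homogeneity, is a complex linear subspace. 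An alternative route staying closer to the paper's machinery: by Theorem \ref{multiplicities} the lipeomorphism forces $k_{X,\infty}(v)=1$ for all $v\in C_{\rm Smp}(X,\infty)$, so Theorem \ref{thm:densitymult} gives $\theta^{2n}(X)=\mathcal H^{2n}(C(X,\infty)\cap B_1^{2(n+k)})/\mu_{2n}=\theta^{2n}(C(X,\infty))$, which by Corollary \ref{cor:density_degree} equals $\deg C(X,\infty)=m(C(X,\infty),0)$; the remaining point is that this multiplicity equals $1$, which again amounts to the Lipschitz rigidity of complex algebraic cones.

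With $C(X,\infty)$ a linear subspace established, $X$ is a definable set, Lipschitz regular at infinity, satisfying the monotonicity formula at some point, and with linear tangent cone at infinity; so condition $(3)$ of Theorem \ref{main_thm} holds and $X$ is an affine linear subspace. Being simultaneously a complex algebraic set of complex dimension $n$, it is in fact a complex affine linear subspace. (One could equally invoke the previous Corollary, having observed that $X$ is LNE at infinity and that $C(X,\infty)$ is a complex linear subspace of the appropriate dimension.)
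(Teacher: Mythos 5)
Your proposal is correct and follows essentially the same route as the paper: reduce to Theorem \ref{main_thm} via condition $(3)$ (or $(4)$), and use Theorem \ref{inv_cones} to see that $C(X,\infty)$ is a complex algebraic cone lipeomorphic to $\R^{2n}$, whence linear. The only divergence is at the step you rightly identified as the crux: where you invoke the Lipschitz rigidity theorem for complex algebraic sets (Lipschitz manifold $\Rightarrow$ smooth $\Rightarrow$ linear cone), the paper uses the weaker-hypothesis Prill theorem \cite{Prill:1967} (a complex cone that is merely a \emph{topological} manifold is a linear subspace); both close the gap.
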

\begin{proof}
$X$ is Lipschitz regular at infinity, then $X$ is LNE at infinity, and by Theorem \ref{inv_cones}, $C(X,\infty)$ is lipeomorphic to $\R^{2n}$. In particular, $C(X,\infty)$ is a topological manifold, and thus by Prill's theorem (\cite[Theorem]{Prill:1967}), $C(X,\infty)$ is a dimensional complex linear subspace with $\dim_{\C}C(X,\infty)=n$. By Theorem \ref{thm:densitymult}, $X$ is an affine linear subspace. 
\end{proof}

We obtain also the following version of Moser's result \cite[Corollary, p. 591]{Moser:1961}:
\begin{corollary}\label{cor:moser_thm_def}
Let $X\subset \mathbb R^{n+1}$ be a complete minimal hypersurface. Suppose that there are compact sets $K\subset \R^n$ and $\tilde K\subset \R^{n+1}$ such that $X\setminus \tilde K$ is the graph of a definable Lipschitz function $u\colon \mathbb{R}^{n}\setminus K\to \mathbb{R}$. Then $u$ is the restriction of an affine function and, in particular, $X$ is a hyperplane.
\end{corollary}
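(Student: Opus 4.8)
The plan is to reduce this statement to Theorem~\ref{main_thm} by verifying that the hypotheses there are met. First I would observe that a complete minimal hypersurface satisfies the monotonicity formula at every point (Remark~\ref{rem:examples_monotonicity}), so condition ``$X$ satisfies the monotonicity formula at some $p\in X$'' holds automatically. The remaining task is to show that $X$ is a definable set, that it is Lipschitz regular at infinity, and that $C(X,\infty)$ is a linear subspace; then item~(3) of Theorem~\ref{main_thm} applies and yields that $X$ is an affine linear subspace, which forces $u$ to be the restriction of an affine function.

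For definability: outside the compact set $\tilde K$, the set $X$ coincides with the graph of the definable function $u$, hence $X\setminus \tilde K$ is definable. To conclude that $X$ itself is definable one needs to handle the compact part $X\cap \tilde K$; here I would use that $X$ is a smooth (real-analytic, since minimal) hypersurface, so $X\cap \tilde K$ is a bounded piece of an analytic manifold, and combine the definable set $X\setminus \tilde K$ with a suitable definable description of a neighborhood of the ``seam''. Alternatively — and this is cleaner — since the statement of Theorem~\ref{main_thm} only uses definability to invoke Kurdyka--Raby at infinity and the invariance results (Theorems~\ref{inv_cones} and~\ref{multiplicities}), all of which are insensitive to a compact modification, one may simply work with $X\setminus \tilde K$ throughout: being Lipschitz regular at infinity and the computation of $\theta^n$ and of $C(X,\infty)$ are all properties ``at infinity'' and only depend on $X$ up to a compact set.

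For Lipschitz regularity at infinity: since $u\colon \R^n\setminus K\to \R$ is Lipschitz, the graph map $x\mapsto (x,u(x))$ is a lipeomorphism from $\R^n\setminus K$ onto $X\setminus \tilde K$ (its inverse is the projection, which is $1$-Lipschitz, and the graph map is $\sqrt{1+L^2}$-Lipschitz where $L$ is the Lipschitz constant of $u$). Hence $X$ is lipeomorphic at infinity to $\R^n$, i.e.\ Lipschitz regular at infinity. For the tangent cone: because $u$ has at most linear growth (being globally Lipschitz on $\R^n\setminus K$), the geometric tangent cone $C(X,\infty)$ is the graph of the tangent cone at infinity of $u$; in fact by Theorem~\ref{inv_cones} the lipeomorphism at infinity between $X\setminus\tilde K$ and $\R^n\setminus K$ induces a lipeomorphism $C(X,\infty)\to C(\R^n,\infty)=\R^n$, so $C(X,\infty)$ is an $n$-dimensional topological manifold which is a cone. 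This is where one must be slightly careful: a Lipschitz-regular cone at infinity need not a priori be a linear subspace (the Lawson--Osserman Example~\ref{example:LO} shows cones that are Lipschitz graphs but not linear — though there the codimension is $>1$). The key extra input is that $X$ is a minimal hypersurface in $\R^{n+1}$: combined with the density bound $\theta^n(X)<+\infty$ coming from Theorem~\ref{thm:densitymult} (applicable since $X\setminus\tilde K$ is definable) and the monotonicity formula, one obtains that $X$ has a well-defined tangent cone at infinity which is itself a minimal cone, and since it is simultaneously a Lipschitz graph over $\R^n$ of a function with bounded slope, it is a Lipschitz minimal graph over all of $\R^n$; by Moser's Bernstein Theorem (Theorem~\ref{thm:moser_thm}) such a cone is a hyperplane.

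I expect the main obstacle to be precisely this last point — showing $C(X,\infty)$ is linear rather than merely a topological manifold that is a cone. One route is the argument just sketched (tangent cone of a minimal graph with bounded slope is again a minimal Lipschitz graph, hence linear by Theorem~\ref{thm:moser_thm}); another, perhaps the one intended here given the placement of the Corollary, is to invoke Corollary~\ref{cor:lne_density}: Lipschitz regularity at infinity implies LNE at infinity, so $\theta^n(X)=\theta^n(C(X,\infty))$, and one then needs $\theta^n(C(X,\infty))=1$, which again reduces to knowing that the minimal cone $C(X,\infty)$ — being a Lipschitz graph over $\R^n$ — has density one, i.e.\ is a hyperplane. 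Once $C(X,\infty)$ is known to be a hyperplane, item~(3) of Theorem~\ref{main_thm} (or item~(5) via $\theta^n(X)=\theta^n(C(X,\infty))=1$) immediately gives that $X$ is an affine linear subspace, hence the graph of an affine function $u$, completing the proof.
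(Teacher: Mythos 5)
Your reduction to Theorem \ref{main_thm} is the right frame, and your handling of the side issues (monotonicity from minimality, Lipschitz regularity at infinity via the graph map $x\mapsto(x,u(x))$, and working with $X\setminus\tilde K$ since everything relevant is a property at infinity) matches what the paper needs. Where you genuinely diverge is at the pivotal step, the linearity of $C(X,\infty)$. The paper does not blow down: it cites Simon's asymptotic result (Lemma 1.18 of \cite{Simon:1987}) to get that $\lim_{x\to\infty}(Du(x),-1)=w$ exists, so that the set $\mathcal{N}(X,\infty)$ of limiting tangent planes is the single hyperplane $w^{\perp}$; an auxiliary lemma ($C(A,\infty)\subset\mathcal{N}(A,\infty)$ for definable $A$) then forces $C(X,\infty)\subset w^{\perp}$, whence $\mathcal{H}^n(C(X,\infty)\cap B_1^{n+1})\le\mu_n$, Kurdyka--Raby with relative multiplicity $1$ gives $\theta^n(X)\le 1$, minimality gives $\theta^n(X)\ge 1$, and item (5) of Theorem \ref{main_thm} finishes. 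Your route instead asserts that the blow-down of $X$ is itself an entire Lipschitz minimal graph cone and applies Moser's theorem to that cone. This is a viable classical alternative, but as written it compresses the hardest part into one sentence: to make it rigorous you need (i) that the rescalings $u_r=u(r\,\cdot)/r$ converge (not merely subsequentially --- definability supplies uniqueness of the limit) to a $1$-homogeneous Lipschitz function whose graph is $C(X,\infty)$; (ii) that the minimal surface equation passes to the limit, i.e.\ $C^2_{loc}$ convergence on $\R^n\setminus\{0\}$ from interior estimates for uniformly Lipschitz solutions; and (iii) a removable-singularity statement at the origin so that the limit is an entire solution to which Theorem \ref{thm:moser_thm} applies. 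None of these steps fails, but each is a nontrivial analytic input absent from the paper, whereas the single citation of Simon packages exactly the needed asymptotic information. Your fallback via Corollary \ref{cor:lne_density} does not avoid this: it still requires knowing the cone is a hyperplane. If you keep your route, state and justify (i)--(iii) explicitly; otherwise the shortest correct path is the paper's: Simon gives $Du\to$ const, hence $C(X,\infty)$ lies in a hyperplane, hence $\theta^n(X)=1$.
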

\begin{proof}
We need a preliminary result. For a set $A\subset \R^n$, denote by $\mathcal{N}(A,\infty)$ the union of all hyperplanes $T$ such that there is a sequence $\{x_j\}_j\subset A\setminus {\rm Sing}_1(A)$ such that $\lim\|x_j\|=+\infty$ and $T_{x_j}A$ converges to $T$.
\begin{lemma}
Let $A\subset \R^n$ be an unbounded definable set. Then, $C(A,\infty)\subset \mathcal{N}(A,\infty)$. 
\end{lemma}
\begin{proof}
Firstly, note that $C(A\setminus {\rm Sing}_1(A),\infty)=C(A,\infty)$ and $\mathcal{N}(A\setminus {\rm Sing}_1(x),\infty)=\mathcal{N}(A,\infty)$. Thus, we may assume that $A$ is $C^1$-smooth.

Let $v\in C(A,\infty)$. We are going to show that $v\in \mathcal{N}(A,\infty)$ and we may assume that $\|v\|=1$. In order to do that, it is enough to find,  for some $R>0$, a definable $C^1$-smooth arc $\alpha\colon (R,+\infty)\to A$ such that $\alpha(t)=tv+o_{\infty}(t)$. Indeed, by taking a subsequence, if necessary, we may assume that $T_{\alpha(j)} A$ converges to $T\subset \mathcal{N}(A,\infty)$. Since $\alpha'(j)\in T_{\alpha(j)} A$ for all $j$, then $\lim \alpha'(j)=v\in T$.

Now, let us prove that there is such an arc $\alpha$.
Let $Y=\rho_{\infty}^{-1}(A)$. We have that $Y$ is a definable set. Note that the closure of $Y$ in $\mathbb{S}^{n-1}\times [0,\infty)$ satisfies the following:
$$
\overline{Y}\cap (\mathbb{S}^{n-1}\times \{0\})=(C(A,\infty)\cap \mathbb{S}^{n-1})\times \{0\}.
$$
Thus, $(v,0)\in \overline{Y}$. By the Curve Selection Lemma, there is a definable continuous arc $\beta\colon [0,\epsilon)\to \overline{Y}$ such that $\beta((0,\epsilon))\subset Y$ and $\beta(0)=(v,0)$. Let us write $\beta(s)=(x(s),h(s))$. Since $h$ is not a constant function around $0$, by Monotonicity Lemma, one can suppose that $\beta$ is $C^1$-smooth and that $h$ is $C^1$ smooth, strictly increasing in the domain $(0,\delta)$ and $h'(s)\not=0$ for all $s\in (0,\delta)$. Hence, $h\colon (0,\delta/2)\to (0,\delta' )$ is a $C^1$-diffeomorphism, where $\delta' =h(\frac{\delta}{2})$.
Let $\alpha\colon (\frac{1}{\delta'},+\infty)\to A$ be the arc $\alpha(t)=\rho_{\infty}\circ \beta\circ h^{-1}(\frac{1}{t})$.

Clearly, $\alpha$ is $C^1$-smooth, and since $\lim\limits_{s\to 0^+} x(s)=x(0)=v$, we obtain
$$
\textstyle\alpha(t)=tx(h^{-1}(\frac{1}{t}))=tv+o_{\infty}(t).
$$
\end{proof}
From the above lemma, we obtain that $C(X,\infty)\subset \mathcal{N}(X,\infty)$.

By Lemma 1.18 in \cite{Simon:1987}, we have that the following limit exists: 
$$
\lim\limits_{x\to \infty}(D u(x),-1)=w.
$$

Therefore, $\mathcal{N}(X,\infty)=w^{\perp}$. In particular, $C(X,\infty)$ is a subset of an $n$-dimensional hyperplane, and thus $\mathcal{H}^{n}(C(X,\infty)\cap B_1^{n+1})\leq \mu_n$. 
Since there is a blow-spherical homeomorphism at infinity between $X$ and $\R^n$, then there is only one relative multiplicity which is $1$. Then, by Theorem \ref{thm:densitymult}, $\theta^n(X)\leq 1$. However, since $X$ is a minimal submanifold, $\theta^n(X)\geq 1.$ Therefore,  $\theta^n(X)= 1$ and by Theorem \ref{main_thm}, $X$ is a hyperplane. It is easy to verify that $u$ is the restriction of an affine function.
\end{proof}

In the next section, we generalize Corollary \ref{cor:moser_thm_def} (see Corollary \ref{thm:gen_moser_thm}).

\section{Generalization of the Moser's Bernstein Theorem}\label{sec:gen_bernstein_thm}
In this section, we generalize the Moser's Bernstein Theorem.

\begin{theorem}\label{thm:gen_moser_thm_higher}
Let $X\subset \mathbb R^{n+k}$ be a connected closed set with Hausdorff dimension $n$. Then, $X$ is an $n$-dimensional affine linear subspace if and only if we have the following: 
\begin{enumerate}
 \item $X$ satisfies the monotonicity formula at some $p\in X$; 
 \item there are compact sets $K\subset \R^n$ and $\tilde K\subset \R^{n+k}$ such that $X\setminus \tilde K$ is the graph of a $C^1$-smooth function $u\colon \mathbb{R}^{n}\setminus K\to \mathbb{R}^{k}$ with bounded derivative;
 \item $\mathcal{N}(X,\infty)$ is a linear subspace.
\end{enumerate} 
\end{theorem}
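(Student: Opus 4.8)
The plan is to prove the nontrivial (``if'') implication; the converse is routine. If $X$ is an $n$-dimensional affine subspace, then (1) holds (indeed $\theta^n(X,p,r)\equiv1$) and (3) holds ($\mathcal N(X,\infty)$ is the direction space of $X$), and after an orthogonal change of coordinates of $\R^{n+k}$ — which preserves every hypothesis and the conclusion — $X$ becomes the graph of an affine map, so (2) holds with $K=\tilde K=\emptyset$. For the ``if'' direction, the strategy is to show that the density at infinity satisfies $\theta^n(X)=1$ and then to conclude exactly as in the implication $(5)\Rightarrow(1)$ of Theorem~\ref{main_thm}. The first step is to read condition (3): by (2) there is a compact $\tilde K$ with $X\setminus\tilde K$ equal to the graph $\Phi(x)=(x,u(x))$ of a $C^1$ map $u$ with $\|Du\|\le C$, so $X\setminus\tilde K\subset\mathrm{Reg}_1(X)$ and $T_{(x,u(x))}X=\mathrm{graph}(Du(x))$ is an $n$-plane that is a graph over $\R^n$ with slope $\le C$. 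Since the set of such $n$-planes is closed in the Grassmannian, every subsequential limit of tangent planes along a sequence tending to infinity is again a graph over $\R^n$; hence $\mathcal N(X,\infty)$ is a union of graphs of linear maps $\R^n\to\R^k$. If this union is a linear subspace $V$, then $V$ contains no nonzero ``vertical'' vector $(0,w)$ (such a vector lies on no graph), whereas if $\dim V>n$ the projection $V\to\R^n$ would be onto (each graph-plane contained in $V$ surjects) and hence have nontrivial kernel, producing a vertical vector — a contradiction. So $\dim V=n$, i.e.\ $\mathcal N(X,\infty)$ is a single $n$-plane $P=\mathrm{graph}(L)$; and since every subsequential limit of $T_{(x,u(x))}X$ as $\|x\|\to\infty$ must equal $P$, we get $Du(x)\to L$ as $\|x\|\to\infty$.

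The second step is to show $\theta^n(X)=1$. By (1) the measure $\mathcal H^n$ is locally finite on $X$, so $\mathcal H^n(X\cap\tilde K)<\infty$ and, by the area formula and $\|Du\|\le C$, $\mathcal H^n(X\cap B_r^{n+k})\le(1+C^2)^{n/2}\mu_nr^n+O(1)$, which at least makes the quantity $\theta^n(X)$ finite once it is shown to exist. To identify it I blow down. Writing the contribution of $X\setminus\tilde K$ to $\mathcal H^n(X\cap B_r^{n+k})$ as $\int_{\Omega_r}J\Phi\,dx$ with $\Omega_r=\{x\in\R^n\setminus K:\ |\Phi(x)|<r\}$ and $J\Phi=\sqrt{\det(I_n+Du^{\top}Du)}$, the substitution $x=ry$ gives $\frac1{r^n}\int_{\Omega_r}J\Phi\,dx=\int_{\R^n}J\Phi(ry)\,\mathbf 1_{r^{-1}\Omega_r}(y)\,dy$. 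Because $Du\to L$ forces $u(x)=Lx+o(\|x\|)$, for a.e.\ $y$ one has $\mathbf 1_{r^{-1}\Omega_r}(y)\to\mathbf 1_{\{|y|^2+|Ly|^2<1\}}(y)$ and $J\Phi(ry)\to\sqrt{\det(I_n+L^{\top}L)}$, while the integrand is dominated by $(1+C^2)^{n/2}\mathbf 1_{B_1^n}$; dominated convergence then yields the limit $\sqrt{\det(I_n+L^{\top}L)}\cdot\mathcal H^n(\{y:|y|^2+|Ly|^2<1\})=\mathcal H^n(P\cap B_1^{n+k})=\mu_n$, the last equality because $P$ is an $n$-dimensional linear subspace. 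Since $X\cap\tilde K$ contributes $O(1)=o(r^n)$, we conclude that $\theta^n(X)$ exists and equals $1$; comparing balls centred at $p$ and at $0$ (the symmetric difference has $\mathcal H^n$-measure $o(r^n)$) gives $\theta^n(X,p,r)\to1$ as well.

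The third step is the conclusion. By (1), $\theta^n(X,p,\cdot)$ is nondecreasing with $\theta^n(X,p,r)\ge1$, and by the second step $\theta^n(X,p,r)\le1$; hence $\theta^n(X,p,\cdot)\equiv1$. Property (ii) of the monotonicity formula then forces $X$ to be a cone with vertex $p$, and property (i), via $\theta^n(X,p)=1$, forces $X$ to be $C^1$-smooth at $p$. A cone with vertex $p$ that is a $C^1$ $n$-submanifold near $p$ must coincide with $p+T_pX$: each ray from $p$ through a point of $X$ near $p$ lies in $X$, and its direction, being a limit of tangent spaces at points tending to $p$, lies in $T_pX$; hence $X$ near $p$ is contained in, and (being $n$-dimensional) open in, $p+T_pX$, and rescaling recovers all of $p+T_pX$. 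Therefore $X=p+T_pX$ is an $n$-dimensional affine subspace, as desired.

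The main obstacle is the identification $\theta^n(X)=1$ in the second step: the hypotheses only give the crude bound $\theta^n(X)\le(1+C^2)^{n/2}$, and sharpening it to the value $1$ is precisely where $Du\to L$ must be used, through the blow-down/dominated-convergence argument above. Everything else — the linear-algebra extraction of $L$ from condition (3), and the cone-plus-smoothness endgame — is soft, and condition (1) is used only to make the density well-defined and to run the final monotonicity argument, so no hypothesis is superfluous.
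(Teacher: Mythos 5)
Your proposal is correct and follows the same overall architecture as the paper: both directions reduce the ``if'' implication to showing $\theta^n(X)=1$ and then invoke the monotonicity formula (the paper does this by citing the implication $(5)\Rightarrow(1)$ of Theorem~\ref{main_thm}; you re-prove that step inline, with a slightly more detailed cone-plus-smoothness endgame). The one place where your route genuinely differs is the computation of the density at infinity. The paper changes coordinates so that $P=\mathcal{N}(X,\infty)$ becomes a coordinate plane, re-graphs $X$ over $P$ as $v$ with $Dv\to0$, shows $v$ is $\pi\varepsilon$-Lipschitz outside a large ball using that the complement of a ball is LNE, and squeezes $1\le\theta^n(X)\le(1+\pi\varepsilon)^n$ before letting $\varepsilon\to0^+$. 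You instead stay in the original graph coordinates, extract the limit slope $L$ with $Du\to L$ by a Grassmannian argument (which, incidentally, supplies a cleaner justification for a step the paper attributes somewhat tersely to the implicit mapping theorem), and then compute $\theta^n(X)$ exactly via the area formula, the rescaling $x=ry$, and dominated convergence, landing on $\mathcal{H}^n(P\cap B_1^{n+k})/\mu_n=1$. Both computations hinge on the same key fact --- convergence of the derivative at infinity, which is where hypothesis (3) enters --- and both are valid; the paper's squeeze avoids the area formula entirely, while your blow-down identifies the limit in one pass without changing coordinates. The only detail worth writing out in your version is the elementary estimate showing that $Du\to L$ implies $u(x)=Lx+o(\|x\|)$ (integrate $Du-L$ along paths in $\R^n\setminus K$ of length comparable to the Euclidean distance), which you assert but do not prove.
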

\begin{proof}
It is obvious that if $X$ is an $n$-dimensional affine linear subspace, then $X$ satisfies (1), (2) and (3).

Reciprocally, assume that $X$ satisfies (1), (2) and (3).

As before, for a set $A\subset \R^n$, denote by $\mathcal{N}(A,\infty)$ the union of all hyperplanes $T$ such that there is a sequence $\{x_j\}_j\subset A\setminus {\rm Sing}_1(A)$ such that $\lim\|x_j\|=+\infty$ and $T_{x_j}A$ converges to $T$.

Let $P$ be the hyperplane $\mathcal{N}(X,\infty)$. We choose linear coordinates $(y_1,...,y_{n+k})$ of $\R^{n+k}$ such that $P=\{(y_1,...,y_{n+k})\in\R^{n+k}; y_{n+1}=...=y_{n+k}=0\}$. For a larger enough $R>0$, $X\setminus (\overline{B_R^{n}}\times \R^k)$ is the graph of a function $v=(v_1,...,v_k)\colon P\setminus \overline{B_R^n}\to \R^k$. By the Implicit Mapping Theorem, we have that $v$ is a Lipschitz function and that the following limit exists: 
$$
\lim\limits_{y\to \infty}D v_i(y)=\tilde w_i,\quad i\in\{1,...,k\}.
$$
Therefore $\tilde w=(\tilde w_1,...,\tilde w_k)=0$.
Then, for each $\varepsilon>0$, there is $r>0$ such that $\|D v(y)\|\leq \varepsilon$ for all $y\in \Omega:= P\setminus \overline{B_r^n}$. 

\begin{claim}\label{claim:Lipschitz_function}
 $v|_{\Omega}$ is a Lipschitz mapping with constant $\pi \varepsilon$.
\end{claim}
\begin{proof}
Firstly, we define the set of all piecewise $C^1$-smooth curves connecting $x$ to $y$ in $\Omega$. We denote such a set by:
$$\Omega(x,y):=\left\{\gamma\colon[0,1]\rightarrow \Omega; \gamma(0)=x, \gamma(1)=y \mbox{ and } \gamma \mbox{ is piecewise $C^1$-smooth}\right\}.$$
Let $x, y \in \Omega$ be any two points. Then

\begin{eqnarray*}
	\|v(x)-v(y)\| & \leq &  \inf_{\gamma\in \Omega(x,y)}\int_{0}^{1}{\|(v\circ \gamma)'(t)\|dt}\\
	& = & \inf_{\gamma\in \Omega(x,y)}\int_{0}^{1}{\|D v(\gamma(t))\cdot\gamma'(t)\|dt}\\
	& \leq & \inf_{\gamma\in \Omega(x,y)}\int_{0}^{1}{\|\nabla(\gamma(t))\| \|\gamma'(t)\|dt}\\
	& \leq & \varepsilon\inf_{\gamma\in \Omega(x,y)}\int_{0}^{1}{\|\gamma'(t)\|dt}\\
	& = & \varepsilon d_{\Omega}(x,y).
\end{eqnarray*}
However, $\Omega$ is a set that is LNE such that $d_{\Omega}(x,y)\leq \pi\|x-y\|$ for all $x,y\in \Omega$. Therefore,
$$\|v(x)-v(y)\|\leq \pi \varepsilon \|x-y\|.$$
\end{proof}

\begin{claim}
$
1\leq \theta^n(X)\leq (1+\pi\varepsilon)^n.
$
\end{claim}
\begin{proof}
Let $\varphi\colon \Omega \rightarrow A=X\setminus (\overline{B_r^{n}}\times \R^k)$ be the mapping given by $\varphi(x)=(x,v(x))$. We have that $\varphi$ is a bi-Lipschitz mapping such that
$$
\|x-y\|\leq \|\varphi(x)-\varphi(y)\|\leq (1+\pi\varepsilon)\|x-y\|.
$$
Therefore,
$$
\mathcal{H}^n(B^n_{t}\setminus B^n_{r}) \leq \mathcal{H}^n(A\cap B_{t}^{n+k}) \leq (1+\pi\varepsilon)^n\mathcal{H}^n(B^n_{t+\pi\varepsilon}),
$$
for all $t>r$.
Thus, we obtain the following
$\theta^n(\R^n\setminus B^n_{r})\leq \theta^n(X)\leq (1+\pi\varepsilon)^n \theta^n(\R^n).$
Since $\theta^n(\R^n\setminus B^n_{r})=\theta^n(\R^n)=1$ and $\theta^n(A)=\theta^n(X)$, we obtain that $
1\leq \theta^n(X)\leq (1+\pi\varepsilon)^n
$, which finishes the proof of the claim.
\end{proof}

Setting $\varepsilon \to 0^+$, we obtain that $\theta^n(X)=1$. By Theorem \ref{main_thm}, $X$ is an affine linear subspace.

% 
% Since $X$ satisfies the monotonicity formula at some $p$, we have that $1\leq \theta^n(X,p,r)\leq \theta^n(X)=1$ for all $r>0$. Then $\theta^n(X,p,r)=1$ for all $r>0$, and by using again that $X$ satisfies the monotonicity formula at some $p$, we have that $X$ is a cone with vertex at $p$ and $X$ is $C^1$-smooth at $p$. Therefore $X$ is an affine linear subspace.

\end{proof}

\subsection{Examples}\label{subsec:examples_graph}
In this subsection, by presenting several examples, we show that Theorem \ref{thm:gen_moser_thm_higher} is sharp in the sense that no hypothesis of it can be removed.

We cannot remove the hypothesis that $\mathcal{N}(X,\infty)$ is a linear subspace.
\begin{example}[Theorem 7.1 in \cite{LawsonO:1977}]
There is a connected closed semialgebraic set $X\subset \mathbb R^{n+k}$ which is an area-minimizing set and is the graph of a semialgebraic Lipschitz function $u\colon \mathbb{R}^{n}\to \mathbb{R}^k$. Moreover, $X$ is not an affine linear subspace.
\end{example}

We cannot remove the condition that $X$ is the graph of a function outside of a compact.
\begin{example}
 The catenoid $C=\{(x,y,z)\in\R^3; x^2+y^2=\cosh^2(z)\}$ (see Example \ref{example:catenoid}) satisfies the monotonicity formula at any point $p\in C$ and $N(C, \infty)$ is the plane $z=0$; however, $C$ is not a plane.
\end{example}

We cannot remove the hypothesis that the function $u$ has bounded derivative.
\begin{example}
 The surface $P=\{(z,w)\in\C^2; z=w^2\}$ is a smooth complex algebraic curve and, in particular, it is an area-minimizing set. $N(P,\infty)$ is the complex line $w=0$. However, $P$ is not a plane. 
\end{example}

We cannot remove the hypothesis that $X$ satisfies the monotonicity formula at some $p$.
\begin{example} $X=\{(x,y,z)\in\R^3; z^3=x^{2}+y^{2}+1\}$ is the graph of the smooth Lipschitz function $f\colon \R^2\to \R$ given by $f(x,y)=(x^2+y^2+1)^{\frac{1}{3}}$. Moreover, $X$ is a smooth surface and $\mathcal{N}(X,\infty)$ is the plane $z=0$. However, $X$ is not a plane. 
\end{example}

\subsection{Consequences}

\begin{corollary}\label{thm:gen_moser_thm}
Let $X\subset \mathbb R^{n+1}$ be a connected closed set with Hausdorff dimension $n$ and that satisfies the monotonicity formula at some $p\in X$. Suppose that there are compact sets $K\subset \R^n$ and $\tilde K\subset \R^{n+1}$ such that $X\setminus \tilde K$ is a minimal hypersurface that is the graph of a $C^2$-smooth function $u\colon \mathbb{R}^{n}\setminus K\to \mathbb{R}$ with bounded derivative. Then $u$ is the restriction of an affine function and, in particular, $X$ is a hyperplane.
\end{corollary}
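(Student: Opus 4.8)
The plan is to derive this corollary directly from Theorem~\ref{thm:gen_moser_thm_higher} with $k=1$, so the task reduces to checking its three hypotheses for $X$. Hypothesis~(1) is literally part of the present assumptions. Hypothesis~(2) holds as well: outside $\tilde K$ the set $X$ is the graph of $u$, which is $C^2$-smooth (in particular $C^1$-smooth) and has bounded derivative by assumption. Thus the only real content is to verify hypothesis~(3), that $\mathcal{N}(X,\infty)$ is a linear subspace. Before doing that, note that since $u$ is $C^2$, the graph of $u$ is a smooth hypersurface and hence ${\rm Sing}_1(X)\subset\tilde K$ is bounded; therefore any sequence $\{x_j\}\subset X\setminus{\rm Sing}_1(X)$ with $\|x_j\|\to\infty$ eventually lies on the graph, so we may write $x_j=(y_j,u(y_j))$ with $y_j\in\R^n\setminus K$ and $\|y_j\|\to\infty$.

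To check hypothesis~(3) I would argue exactly as in the proof of Corollary~\ref{cor:moser_thm_def}. Outside a large ball $u$ solves the minimal surface equation, and since $|Du|$ is bounded this equation is uniformly elliptic there; consequently, by Lemma~1.18 in \cite{Simon:1987}, the limit
$$
w:=\lim_{x\to\infty}(Du(x),-1)
$$
exists. Now the tangent space of the graph of $u$ at $x_j=(y_j,u(y_j))$ is the hyperplane $(Du(y_j),-1)^{\perp}$, which converges to the fixed hyperplane $w^{\perp}$ as $\|y_j\|\to\infty$. Since, as observed above, every sequence relevant to the definition of $\mathcal{N}(X,\infty)$ is of this form, we conclude $\mathcal{N}(X,\infty)=w^{\perp}$, an $n$-dimensional linear subspace; this is exactly hypothesis~(3).

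With all three hypotheses verified, Theorem~\ref{thm:gen_moser_thm_higher} yields that $X$ is an $n$-dimensional affine linear subspace of $\R^{n+1}$, that is, a hyperplane. Finally, since $X\setminus\tilde K$ is on one hand the graph of $u$ over $\R^n\setminus K$ and on the other hand an open piece of this hyperplane, the hyperplane projects onto all of $\R^n$ under $(y,t)\mapsto y$ and is therefore itself the graph of an affine function $\ell\colon\R^n\to\R$; hence $u=\ell|_{\R^n\setminus K}$ is the restriction of an affine function, which finishes the proof. The only step requiring anything beyond bookkeeping is the verification of hypothesis~(3), and even that is routine given the classical asymptotic gradient estimate for minimal graphs of bounded slope (Simon's lemma), so I do not expect a serious obstacle; the substance of the statement is already packaged in Theorem~\ref{thm:gen_moser_thm_higher}.
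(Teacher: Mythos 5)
Your proposal is correct and follows essentially the same route as the paper: verify the hypotheses of Theorem \ref{thm:gen_moser_thm_higher} with $k=1$, using Simon's asymptotic gradient lemma to conclude that $(Du(x),-1)$ converges to some $w$ at infinity and hence $\mathcal{N}(X,\infty)=w^{\perp}$. The only cosmetic difference is that the paper cites Theorem X of Bers alongside Lemma 1.18 of Simon to guarantee the existence of this limit; otherwise the argument is identical.
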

\begin{proof}
By Lemma 1.18 in \cite{Simon:1987} and Theorem X in \cite{Bers:1951}, we have that the following limit exists: 
$$
\lim\limits_{x\to \infty}(D u(x),-1)=w.
$$

Therefore, $\mathcal{N}(X,\infty)$ is the hyperplane $w^{\perp}$. 

By Theorem \ref{thm:gen_moser_thm_higher}, $X$ is a hyperplane. It is obvious that $u$ is the restriction of an affine function.
\end{proof}

\begin{corollary}\label{cor:gen_bernstein_thm}
Let $X\subset \mathbb R^{n+1}$ be a connected closed set with Hausdorff dimension $n$ and that satisfies the monotonicity formula at some $p\in X$. Suppose that $2\leq n\leq 7$ and there are compact sets $K\subset \R^n$ and $\tilde K\subset \R^{n+1}$ such that $X\setminus \tilde K$ is a minimal hypersurface that is the graph of a $C^2$-smooth function $u\colon \mathbb{R}^{n}\setminus K\to \mathbb{R}$. Then $u$ is the restriction of an affine function and, in particular, $X$ is a hyperplane.
\end{corollary}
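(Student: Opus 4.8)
The plan is to reduce everything to showing that $\theta^n(X)=1$: since $X$ satisfies the monotonicity formula at $p$, the implication $(5)\Rightarrow(1)$ of Theorem~\ref{main_thm} then gives that $X$ is an affine linear subspace, hence a hyperplane, and it is immediate that $u$ is the restriction of an affine function. One could instead first prove that $u$ has bounded derivative and then invoke Corollary~\ref{thm:gen_moser_thm} verbatim, since its only extra hypothesis is exactly that bound; but establishing $\theta^n(X)=1$ directly is cleaner, and it is in fact the quantity that the proof of Corollary~\ref{thm:gen_moser_thm} ultimately produces.

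To prove $\theta^n(X)=1$ I would blow $X$ down at infinity. The graph of a $C^2$ solution $u$ of the minimal surface equation over $\R^n\setminus K$ is calibrated by $\omega=(Du,-1)/\sqrt{1+|Du|^2}$ inside the cylinder $(\R^n\setminus K)\times\R$; equivalently it is the reduced boundary of the subgraph $\{(x,t):t<u(x)\}$ there, so $X\setminus\tilde K$ is a smooth, multiplicity-one, area-minimizing hypersurface outside $\tilde K$. By the classical area bound for minimal graphs, $\mathcal H^n(X\cap B_r^{n+1})=O(r^n)$, hence $\theta^n(X)<+\infty$. Combining this finiteness with the monotonicity formula at $p$: the rescalings $\lambda^{-1}X$ have locally bounded mass, so a subsequence converges to a set $C$; the limit is area-minimizing in $\R^{n+1}\setminus\{0\}$ because $\lambda^{-1}\tilde K$ shrinks to the origin, hence area-minimizing in $\R^{n+1}$ by removability of the isolated singularity, with multiplicity one, ${\rm Sing}(C)\subset\{0\}$, and $\theta^n(C,0)=\lim_{r\to+\infty}\theta^n(X,p,r)=\theta^n(X)$; moreover $C$ is genuinely a cone because $\theta^n(X,p,\cdot)$ is nondecreasing with limit $\theta^n(X)$, which forces the density ratio of the limit at $0$ to be constant. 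For $2\le n\le 7$, Simons' theorem \cite{Simons:1968} (the engine behind Theorem~\ref{thm:bernstein_thm}) says such a $C$ is a hyperplane, so $\theta^n(X)=\theta^n(C,0)=1$, and Theorem~\ref{main_thm} finishes the argument. Here I am not using that $C$ is graphical, but one could: since $X\setminus\tilde K$ is a graph over an exterior domain, $C$ is a Lipschitz graph over $\R^n$ and Moser's Bernstein Theorem~\ref{thm:moser_thm} applies to it directly.

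The step I expect to be the main obstacle is precisely this blow-down: the minimality of $X$ — hence the area-minimizing property and the monotonicity at the large scales that matter — is available only outside the compact set $\tilde K$, so some care is needed to check that $X$ nonetheless admits a genuine area-minimizing tangent cone at infinity with an isolated singularity, that the vanishing of the compact part under rescaling is harmless, and that the monotonicity formula at $p$ in the axiomatic sense used in this paper suffices to force the blow-down limit to be a cone of the prescribed density. The other point to handle carefully is the $O(r^n)$ area bound for minimal graphs, which is classical but should be quoted precisely. It is exactly at the classification of the tangent cone that the restriction $n\le 7$ enters, through Simons' theorem; for $n\ge 8$ the Bombieri--De~Giorgi--Giusti examples \cite{BombieriGG:1969} show the conclusion genuinely fails.
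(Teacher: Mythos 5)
Your first paragraph already contains the paper's actual proof: Corollary \ref{cor:gen_bernstein_thm} is deduced in two lines by quoting Theorem 1 of \cite{Simon:1987} together with Theorem X of \cite{Bers:1951} to conclude that $Du$ is bounded when $2\le n\le 7$, and then invoking Corollary \ref{thm:gen_moser_thm} verbatim. So the route you set aside as ``an alternative'' is the intended one, and if you take it you must still say where the gradient bound comes from --- it is exactly Simon's theorem on exterior minimal graphs, not an elementary estimate.

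The blow-down argument you prefer instead has a concrete gap at the step you yourself flag. The graph of $u$ is calibrated, hence area-minimizing, only inside the cylinder $(\R^n\setminus K)\times\R$; under the rescaling $x\mapsto\lambda^{-1}x$ this cylinder becomes $(\R^n\setminus\lambda^{-1}K)\times\R$, which exhausts the complement of the vertical line $L=\{0\}\times\R$, not the complement of the origin. (The set $\lambda^{-1}\tilde K$ does indeed shrink to the origin, but that is irrelevant: what must be rescaled is the region where the minimizing property is known.) Consequently your limit $C$ is a priori only stationary and minimizing in $\R^{n+1}\setminus L$, and ``removability of the isolated singularity'' does not apply. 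Ruling out mass or singular behaviour of the blow-down along the axis --- for instance several half-hyperplanes meeting along $L$, or a vertical sheet produced by points with $|u(x)|/|x|\to\infty$ --- is precisely the technical core of Simon's paper (and of Schoen's earlier work on exterior graphs) and uses the graphical structure essentially; for $n=2$ even stationarity across a line is not automatic, since $\mathcal{H}^{n-1}(L)>0$, which is why the paper additionally cites Bers. Your parenthetical claim that $C$ ``is a Lipschitz graph over $\R^n$'' is likewise unjustified before a gradient bound is known. The remaining steps of your plan --- the $O(r^n)$ area bound, constancy of the density ratio forcing the limit to be a cone, Simons' classification for $n\le7$, and then $\theta^n(X)=1$ together with $(5)\Rightarrow(1)$ of Theorem \ref{main_thm} --- are sound and consistent with the paper's architecture, but the axis issue is a genuine missing step, not a routine verification.
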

\begin{proof}
By Theorem 1 in \cite{Simon:1987} and Theorem X in \cite{Bers:1951}, $D u$ is bounded. By Corollary \ref{thm:gen_moser_thm}, $X$ is a hyperplane and $u$ is the restriction of an affine function.
\end{proof}

Since a minimal hypersurface satisfies the monotonicity formula at any of its points, we obtain, as direct consequences of Corollaries \ref{cor:gen_bernstein_thm} and \ref{thm:gen_moser_thm}, the following results:

\begin{corollary}\label{cor:weak_gen_bernstein_thm}
Let $X\subset \mathbb R^{n+1}$ be a connected complete minimal hypersurface. Suppose that there are compact sets $K\subset \R^n$ and $\tilde K\subset \R^{n+1}$ such that $X\setminus \tilde K$ is the graph of a $C^2$-smooth function $u\colon \mathbb{R}^{n}\setminus K\to \mathbb{R}$ with bounded derivative. Then $X$ is a hyperplane and $u$ is the restriction of an affine function.
\end{corollary}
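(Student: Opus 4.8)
The plan is to reduce Corollary \ref{cor:weak_gen_bernstein_thm} to Corollary \ref{thm:gen_moser_thm}, which is already available, by checking its two additional hypotheses. First I would observe that a connected complete minimal hypersurface $X\subset\mathbb R^{n+1}$ satisfies the monotonicity formula at every point $p\in X$; this is exactly the content recorded in Remark \ref{rem:examples_monotonicity}. Thus hypothesis (1) of Corollary \ref{thm:gen_moser_thm} holds automatically. The hypothesis that $X$ is a connected closed set of Hausdorff dimension $n$ is also immediate: a complete minimal hypersurface is a closed $n$-dimensional submanifold of $\mathbb R^{n+1}$, hence $\dim_H X = n$.

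Next I would verify the remaining structural hypothesis: there are compact sets $K\subset\mathbb R^n$ and $\tilde K\subset\mathbb R^{n+1}$ so that $X\setminus\tilde K$ is the graph of a $C^2$-smooth function $u\colon\mathbb R^n\setminus K\to\mathbb R$ with bounded derivative. But this is assumed verbatim in the statement of Corollary \ref{cor:weak_gen_bernstein_thm}, and $u$ being a graph piece of a smooth minimal hypersurface is automatically $C^2$ (indeed real-analytic) and, being the portion of a \emph{minimal} hypersurface sitting over $\mathbb R^n\setminus K$ as a graph with bounded gradient, this is precisely the setup of Corollary \ref{thm:gen_moser_thm}. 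Hence all hypotheses of Corollary \ref{thm:gen_moser_thm} are met.

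Applying Corollary \ref{thm:gen_moser_thm} then yields directly that $u$ is the restriction of an affine function and that $X$ is a hyperplane, which is the assertion to be proved. I expect no real obstacle here: the corollary is a genuine specialization obtained by the single substantive observation (completeness $\Rightarrow$ monotonicity formula, via Remark \ref{rem:examples_monotonicity}) together with bookkeeping of hypotheses. The only point deserving a word of care is confirming that "complete minimal hypersurface'' in the sense used here does fall under the class in Remark \ref{rem:examples_monotonicity} (complete minimal submanifolds), so that the monotonicity formula at $p$ is legitimately invoked; once that is granted, the deduction is a one-line citation of Corollary \ref{thm:gen_moser_thm}.
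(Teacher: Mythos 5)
Your proposal is correct and follows exactly the paper's own route: the paper derives this corollary as a direct consequence of Corollary \ref{thm:gen_moser_thm}, using only the observation (Remark \ref{rem:examples_monotonicity}) that a complete minimal hypersurface satisfies the monotonicity formula at each of its points. Nothing further is needed.
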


\begin{corollary}\label{cor:_weak_gen_bernstein_thm}
Let $X\subset \mathbb R^{n+1}$ be a connected complete minimal hypersurface with $2\leq n\leq 7$. Suppose that there are compact sets $K\subset \R^n$ and $\tilde K\subset \R^{n+1}$ such that $X\setminus \tilde K$ is the graph of a $C^2$-smooth function $u\colon \mathbb{R}^{n}\setminus K\to \mathbb{R}$. Then $X$ is a hyperplane and $u$ is the restriction of an affine function.
\end{corollary}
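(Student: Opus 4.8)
The plan is to recognize this statement as a direct specialization of Corollary \ref{cor:gen_bernstein_thm}, whose hypotheses are phrased for a set that \emph{satisfies the monotonicity formula at some point}, and to supply that hypothesis for free from the minimality of $X$. First I would invoke Remark \ref{rem:examples_monotonicity}: a complete minimal submanifold of $\R^{n+k}$ satisfies the monotonicity formula at every one of its points. Since $X\subset\R^{n+1}$ is a connected complete minimal hypersurface, it satisfies the monotonicity formula at any $p\in X$, which is precisely the first hypothesis needed to apply Corollary \ref{cor:gen_bernstein_thm}. It is important here that completeness (and minimality on all of $X$, not merely on $X\setminus\tilde K$) is what licenses this, which is the one point I would check carefully.

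Next I would observe that, because $X$ itself is a minimal hypersurface, the piece $X\setminus\tilde K$ is automatically a minimal hypersurface, and by assumption it coincides with the graph of the $C^2$-smooth function $u\colon\R^n\setminus K\to\R$. Combined with the range restriction $2\le n\le 7$, this is exactly the remaining hypothesis of Corollary \ref{cor:gen_bernstein_thm}. Note that no bound on $Du$ need be assumed at this stage: inside the proof of Corollary \ref{cor:gen_bernstein_thm} the boundedness of $Du$ is \emph{derived}, via Theorem 1 in \cite{Simon:1987} together with Theorem X in \cite{Bers:1951}, and this derivation is exactly where $n\le 7$ enters.

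Applying Corollary \ref{cor:gen_bernstein_thm} then gives immediately that $u$ is the restriction of an affine function and that $X$ is a hyperplane, which is the assertion. So the argument is a two-line reduction; there is no genuine obstacle, since all the analytic and geometric content has already been absorbed into Theorem \ref{thm:gen_moser_thm_higher} and its consequences. If anything, the only thing I would be careful about is to state explicitly that "minimal hypersurface" is used twice for different purposes here — once to get the monotonicity formula on all of $X$, and once to identify $X\setminus\tilde K$ as a minimal graph — so that the reader sees both hypotheses of Corollary \ref{cor:gen_bernstein_thm} are met.
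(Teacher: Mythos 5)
Your proposal is correct and matches the paper's own argument: the paper derives this corollary directly from Corollary \ref{cor:gen_bernstein_thm} by noting that a complete minimal hypersurface satisfies the monotonicity formula at every point (Remark \ref{rem:examples_monotonicity}), exactly as you do. Your additional remarks on where the boundedness of $Du$ and the restriction $n\le 7$ enter are accurate but not needed beyond what the cited corollary already encapsulates.
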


We finish this section by presenting the following example, which shows that the hypothesis that $X$ is a complete hypersurface in the above corollaries cannot be dropped:
\begin{example}
Outside of a compact set, the upper part of the catenoid is the graph of a Lipschitz function (see Example \ref{example:catenoid}). Indeed, we can see the upper part of the catenoid as the graph of the function $f\colon \R^2\setminus \overline{B_2^2}\rightarrow \R$ given by $f(x,y)=\mbox{arccosh}(\sqrt{x^2+y^2})$. Now, we note that 
\begin{eqnarray*}
\left|\frac{\partial f}{\partial x}(x,y)\right| & = & \frac{x}{\sqrt{x^2+y^2}}\frac{1}{\sqrt{x^2+y^2-1}} \leq 1,
\end{eqnarray*}
and
\begin{eqnarray*}
	\left|\frac{\partial f}{\partial y}(x,y)\right| & = & \frac{y}{\sqrt{x^2+y^2}}\frac{1}{\sqrt{x^2+y^2-1}} \leq 1.
\end{eqnarray*}
Since $\R^2\setminus \overline{B_2^2}$ is Lipschitz normally embedded set, we have that $f$ is Lipschitz (see Claim \ref{claim:Lipschitz_function}).
\end{example}

\bigskip

\noindent{\bf Acknowledgements}. The authors would like to thank Alexandre Fernandes and Pacelli Bessa for their interest, encouragement, and comments on this research.


\begin{thebibliography}{99}

\bibitem{Allard-1972}
{Allard, W. K.}
{\it On the first variation of a varifold}.
Ann. of Math. (2), 95:417–491, 1972.	

\bibitem{Almgren:1966} Almgren, F. J.  
{\it Some interior regularity theorems for minimal surfaces and an extension of Bernstein's theorem}.
Annals of Mathematics, Second Series, 84 (2), (1966): 277–292.

\bibitem{AssimosJ:2018}
{Assimos, R. and Jost, J.}
{\it The Geometry of Maximum Principles and a Bernstein Theorem in Codimension 2}.
Preprint (2018), arXiv:1811.09869 [math.DG].	

\bibitem{Bernstein:1915}
Bernstein, S. N. 
{\it Sur une théorème de géometrie et ses applications aux équations dérivées partielles du type elliptique}.
Comm. Soc. Math. Kharkov, (15), 38–45.

\bibitem{Bers:1951} 
Bers, L.
{\it Isolated Singularities of Minimal Surfaces}.
Ann. Math., vol. 53 (1951), 364--386.

\bibitem{BirbrairM:2000}
{Birbrair, L. and Mostowski, T.}
{\it Normal embeddings of semialgebraic sets}.
Michigan Math. J., vol. 47 (2000), 125--132.

\bibitem{BirbrairFLS:2016}
{Birbrair, L.; Fernandes, A.; L\^e D. T. and Sampaio, J. E.}
{\it Lipschitz regular complex algebraic sets are smooth}.
Proceedings of the American Mathematical Society, vol. 144 (2016), 983--987.

\bibitem{Bishop:1964}
{Bishop, E.} 
{\it Condition for the analyticity of certain sets}. 
Michigan Math. J., vol. 11 (1964), 289--304. 

\bibitem{site_minimal}
Classical Surfaces.
{\it Bloomington's Virtual Minimal Surface Museum}. \url{https://minimal.sitehost.iu.edu/archive/Classical/index.html}. Accessed: November 29, 2023.

\bibitem{BombieriGG:1969}
{ Bombieri, E.; De Giorgi, E.; Giusti, E.}
{\it Minimal cones and the Bernstein problem}.
Inventiones Mathematicae, vol. 7 (1969), 243–268.


\bibitem{Chirka:1989} 
Chirka, E.M. 
{\it Complex analytic sets}. 
Kluwer Academic Publishers, 1989.

\bibitem{Collin-1997}
{Collin, P.}
{\it Topologie et courbure des surfaces minimales proprement plonges de $\mathbb{R}^3$}.
Ann. of Math., (145)2 (1997), 1-31.

\bibitem{DeGiorgi:1965} De Giorgi, E. , 
{\it Una estensione del teorema di Bernstein}.
Ann. Scuola Norm. Sup. Pisa (3) (1965), 19: 79–85.

\bibitem{DiasR:2022}
{Dias, L.R.G. and  Ribeiro, N.R.}
{\it Lipschitz normally embedded set and tangent cones at infinity}.
The Journal of Geometric Analysis, vol. 32 (2022), article number 51, 1-10.

\bibitem{Draper:1969} 
{Draper, R. N.}
{\it Intersection Theory in Analytic Geometry}.
Mathematische Annalen, vol. 180 (1969), 175--204.

\bibitem{Dries:1984}
{van den Dries, L.}
{\it Remarks on Tarski's problem concerning $(R,+,\cdot,exp)$}. G. Lolli (ed.) G. Longo (ed.) A. Marcja (ed.), Logic Colloquium '82 , North-Holland (1984), 97–121.

\bibitem{Ephraim:1976} 
{Ephraim, R.}
{\it $C^1$ preservation of multiplicity}. 
Duke Math., vol. 43 (1976), 797--803.

\bibitem{FernandesS:2020}
{Fernandes, A. and J. E. Sampaio.}
{\it On Lipschitz rigidity of complex analytic sets}.
The Journal of Geometric Analysis, vol. 30 (2020), 706--718.

\bibitem{FernandesS:2023}
{Fernandes, A. and Sampaio, J. E.}
{\it Bi-Lipschitz invariance of the multiplicity}. 
Handbook of Geometry and Topology of Singularities IV.  Jose Luis Cisneros Molina, Jos\'e Seade, D\~ung Tr\'ang L\^e (editors). Springer Nature Switzerland AG, 2023. 

\bibitem{FernandezFS:2018}
{Fern\'andez de Bobadilla, J.; Fernandes, A. and Sampaio, J. E.}
{\it Multiplicity and degree as bi-Lipschitz invariants for complex sets}.
Journal of Topology, vol. 11, pp. 957--965, 2018.

\bibitem{Ghomi:2013}
{Ghomi, M. and Howard, R.}
{\it Tangent cones and regularity of real hypersurfaces}.
Journal f$\ddot{u}$r die reine und angewandte Mathematik (Crelles Journal), vol. 697 (2014), 221--247.

\bibitem{Harvey:1975}
{Harvey, R. and Lawson, B.}
{\it Extending Minimal Varieties.}
Inventiones math. 28 (1975), 209--226.

\bibitem{JostX:1999}
Jost, J. and Xin, Y.L. 
{\it Bernstein type theorems for higher codimension.}
Calc. Var. Partial Differ. Equ., vol. 9 (1999), 277–-296.

\bibitem{JostXY:2016}
Jost, J.; Xin, Y.L. and Yang, L. 
{\it The geometry of Grassmannian manifolds and Bernstein type theorems for higher codimension.} Ann. Del. Scu. Norm. Sup. Di Pisa, vol. XVI (2016), 1–39.

\bibitem{JostXY:2018}
Jost, J.; Xin, Y. L. and Yang, L.
{\it A spherical Bernstein theorem for minimal submanifolds of higher codimension.}
Calc. Var. Partial Differ. Equ., vol. 57 (2018), article no. 166, 1--21. 

\bibitem{KurdykaP:2006} 
Kurdyka, K. and Parsusi\'nski, A.
{\it Quasi-convex decomposition in o-minimal structures. Application to the gradient conjecture.} 
Advanced Studies in Pure Mathematics, vol. 43 (2006), 
Singularity Theory and Its Applications pp. 137-177.

\bibitem{Kurdyka:1989} 
{Kurdyka, K. and Raby, G.}
{\it Densit\'e des ensembles sous-analytiques}. 
Ann. Inst. Fourier (Grenoble), vol. 39 (1989), no. 753--771.

\bibitem{LawsonO:1977} 
{Lawson Jr., H. B. and Osserman, R.}
{\it Non-existence, non-uniqueness and irregularity of solutions to the minimal surface system}.
Acta Math., vol. 139 (1977), no. 1-2, 1--17.

\bibitem{Meeks-2005}
{Meeks III, W. H.}
{\it Global Problems in Classical Minimal Surface Theory}.
Global theory of minimal surfaces, Clay Math. Proc., vol. 2, Amer. Math. Soc., 2005, p. 453-469.

\bibitem{Moore:1999}
{Moore, H.}
{\it Stable Minimal Hypersurfaces and Tangent Cone Singularities.}
International Journal of Mathematics 10.03 (1999): 407-413.

\bibitem{Moser:1961}
{Moser, J.}
{\it On Harnack's Theorem for Elliptic Differential Equations.}
Communications on Pure and Applied Mathematics, vol. 14 (1961), 577--591.

\bibitem{Mumford:1961} 
{Mumford, M.}
{\it The topology of normal singularities of an algebraic surface and a criterion for simplicity}.
Inst. Hautes \'Etudes Sci. Publ. Math., vol. 9 (1961), 5--22.

\bibitem{NguyenS:2023}
{Nguyen, N. and Si Tiep, D.}
{\it On the density at infinity of definable functions.}
Preprint (2023) arXiv:2310.06324 [math.AG].

\bibitem{Pawlucki:1985}
{Paw\l ucki, W.}
{\it Quasi-regular boundary and Stokes' formula for a sub-analytic leaf}.
In: \L awrynowicz J. (eds) Seminar on Deformations. Lecture Notes in Math., vol 1165. Berlin, Heidelberg: Springer. 1985.

\bibitem{PeterzilS:2009}
{Peterzil, Y. and Starchenko, S.}
{\it Complex analytic geometry and analytic-geometric categories.}
J. reine angew. Math., vol. 626 (2009), 39--74.

\bibitem{Pokrovskii:2005}
Pokrovskii, A. V. 
{\it  Removable singularities of solutions of the minimal surface equation. (Russian)} 
Funktsional. Anal. i Prilozhen. 39 (2005), no. 4, 62--68, 95--96; translation in Funct. Anal. Appl. 39 (2005), no. 4, 296--300.

\bibitem{Prill:1967} 
{Prill, D. }
{\it  Cones in complex affine space are topologically singular}.
Proc. of AMS, vol. 18 (1967), 178--182.

\bibitem{Sampaio:2015} 
{Sampaio, J. E.}
{\it Regularidade Lipschitz, invari\^ancia da multiplicidade e a geometria dos cones tangentes de conjuntos anal\'iticos.}
2015. Tese (Doutorado em Matem\'tica) - Universidade Federal do Cear\'a, Fortaleza, 2015.

\bibitem{Sampaio:2016}
{Sampaio, J. E.}
{\it Bi-Lipschitz homeomorphic subanalytic sets have bi-Lipschitz homeomorphic tangent cones}.
Selecta Mathematica: New Series, vol. 22 (2016), no. 2, 553--559.

\bibitem{Sampaio:2018}
{Sampaio, J. E.}
{\it Globally subanalytic CMC surfaces in $\mathbb{R}^3$ with singularities}. Proceedings A of the Royal Society of Edinburgh, vol. 151 (2021), no. 1, 407-424.

\bibitem{Sampaio:2021}
{Sampaio, J. E.}
{\it Multiplicity, regularity and Lipschitz Geometry of real analytic hypersurfaces}. 
Israel Journal of Mathematics,  vol. 246 (2021), no. 1, 371–-394. 

\bibitem{Sampaio:2023}
{Sampaio, J. E.}
{\it On Lipschitz Geometry at infinity of complex analytic sets}. 
Calculus of Variations and Partial Differential Equations, vol. 62 (2023), no. 2, article number 69.

\bibitem{Sampaio:2023b}
{Sampaio, J. E.}
{\it Local vs. global Lipschitz geometry}. 
Preprint (2023), arXiv:2305.11830 [math.MG].

\bibitem{SampaioS:2022}
{Sampaio, J. E. and da Silva, E. C.}
{\it On bi-Lipschitz invariance and the uniqueness of tangent cones}. Journal of Singularities, vol. 25 (2022), 393--402.

\bibitem{SampaioS:2023}
{Sampaio, J. E. and da Silva, E. C.}
{\it Classification of complex algebraic curves under blow-spherical equivalence}.
Preprint (2023), arXiv:2302.02026 [math.AG].


\bibitem{ShenZ:1998}
{Shen, Y.-B. and Zhu, X.-H.}
{\it On Stable Complete Minimal Hypersurfaces in $\mathbb{R}{n+1}$}.
American Journal of Mathematics, vol. 120 (1998), no. 1, 103--116.

\bibitem{ShoenS:1981}
{Shoen, R. and Simon, L.}
{\it Regularity of stable minimal hypersurfaces}. Comm. Pure Appl. Math., vol. 34, (1981), 741-797.

\bibitem{ShoenSY:1975}
{Shoen, R.; Simon, L. and Yau, S.-T.}
{\it Curvature estimates for minimal hypersurfaces}. Acta Math., vol. 134, (1975), 275-288.

\bibitem{Simon:1987}
Simon, L.
{\it Asymptotic behaviour of minimal graphs over exterior domains}.
Annales de l'I.H.P., section C, vol. 4 (1987), no. 3, 231--242.

\bibitem{Simons:1968} 
Simons, J. 
{\it Minimal varieties in riemannian manifolds}.
Annals of Mathematics, Second Series, 88 (1) (1968): 62–105.

\bibitem{Stasica:1982}
Stasica, J. 
{\it Whitney property of sub-analytic sets}. 
Zeszyty Naukowe Uniw. Jag., vol. 23 (1982), 211–221.

\bibitem{Stoll:1964a}
{Stoll, W.}
{\it The growth of the area of a transcendental analytic set. I}.
Math. Ann., 156 (1964), no. 1, 47--78.

\bibitem{Stoll:1964b}
{Stoll, W.}
{\it The growth of the area of a transcendental analytic set. II}.
Math. Ann., vol. 156 (1964), 144--170. 

\bibitem{Xin:2005}
Xin, Y.L.
{\it Bernstein type theorems without graphic condition.}
Asian J. Math., vol. 9 (2005), no. 1, 31-–44.

\bibitem{Yau:1982}
Yau, S.-T.
{\it Problem section}. 
In: Seminar on Differential Geometry. Ann. of Math. Stud., vol. 102, Princeton
Univ. Press, 1982, 669-706.

\end{thebibliography}
\end{document}